\numberwithin{equation}{section}
\pgfplotsset{compat=newest}
\title{On convergence rates of adaptive ensemble Kalman inversion for linear ill-posed problems}
\author{Fabian Parzer$^1$\\{\footnotesize\href{mailto:email}{fabian.kai.parzer@univie.ac.at}}
\and Otmar Scherzer$^{1,2,3}$\\{\footnotesize\href{mailto:email}{otmar.scherzer@univie.ac.at}}}
\date{}
\titleformat{\section}[block]{\large\sc\filcenter}{\thesection.}{0.5ex}{}[]
\titleformat{\subsection}[runin]{\bf}{\thesubsection.}{0.5ex}{}[.]
\newtheorem{lemma}{Lemma}[section]
\newaliascnt{proposition}{lemma}
\newtheorem{proposition}[proposition]{Proposition}
\newaliascnt{corollary}{lemma}
\newtheorem{corollary}[corollary]{Corollary}
\newaliascnt{theorem}{lemma}
\newtheorem{theorem}[theorem]{Theorem}
\newaliascnt{definition}{lemma}
\newtheorem{definition}[definition]{Definition}
\newaliascnt{assumption}{lemma}
\newtheorem{assumption}[assumption]{Assumption}
\newaliascnt{example}{lemma}
\newtheorem{example}[example]{Example}
\newaliascnt{remark}{lemma}
\newtheorem{remark}[remark]{Remark}
\newaliascnt{notation}{lemma}
\theoremstyle{nonumberplain}
\newtheorem{proof}{Proof}
\newcommand{\N}{\mathds{N}}
\newcommand{\R}{\mathds{R}}
\let\RE\Re
\let\Re=\undefined
\DeclareMathOperator{\Re}{\RE e}
\let\IM\Im
\let\Im=\undefined
\DeclareMathOperator{\Im}{\IM m}
\DeclareMathOperator{\supp}{supp}
\newcommand{\abs}[1]{\left|#1\right|}
\newcommand{\norm}[1]{\left\|#1\right\|}
\newcommand{\set}[1]{\left\{#1\right\}}
\newcommand{\inner}[2]{\left<#1,#2\right>}
\newcommand{\e}{\mathrm e}
\let\ii\i
\renewcommand{\i}{\mathrm i}
\renewcommand{\d}{\,\mathrm d}
\DeclareMathOperator*{\argmin}{argmin}
\newcommand{\Set}[2]{\left\{\,#1:\, #2\,\right\}}
\newcommand{\dom}[1]{ \mathcal{D}(#1) }
\newcommand{\ran}[1]{ \mathcal{R}(#1) }
\newcommand{\rank}{\mathrm{rank}}
\newcommand{\HH}{\mathbb{H}}
\newcommand{\XX}{\mathbb{X}}
\newcommand{\YY}{\mathbb{Y}}
\newcommand{\BL}{\mathcal{L}} 
\newcommand{\Id}{\mathrm{I}}
\newcommand{\Idmat}{\mathbb{I}}
\newcommand{\Exp}[1]{\mathbb{E}\left[ #1 \right] }
\newcommand{\Cov}[1]{ \mathrm{Cov}\left(#1 \right) }
\newcommand{\normal}{\mathcal{N}}
\newcommand{\PP}{\mathbb{P}}
\newcommand{\xtrue}{x_*}
\newcommand{\xo}{x_0}
\newcommand{\xmns}{x^\dagger}
\newcommand{\xtik}{\hat x}
\newcommand{\xtika}{\hat x_\alpha}
\newcommand{\xa}{x_\alpha}
\newcommand{\obs}{\hat y}
\newcommand{\noicov}{R}
\newcommand{\cov}{C_{0}}
\newcommand{\Ij}{\Idmat_J}
\newcommand{\Idx}{\Id_\XX}
\newcommand{\Idy}{\Id_\YY}
\newcommand{\Lp}[2]{\Exp { #1^{#2} }^{1 / {#2} } }
\newcommand{\xinner}[2]{\inner{#1}{#2}_\XX}
\newcommand{\xnorm}[1]{\norm{#1}_\XX}
\newcommand{\Lpxnorm}[2]{ \Lp{\xnorm{#1}}{#2}}
\newcommand{\xopnorm}[1]{\norm{#1}_{\BL(\XX;\XX)}}
\newcommand{\Lpxopnorm}[2]{ \Lp{\xopnorm{#1}}{#2}}
\newcommand{\covnorm}[1]{ \norm{#1}_{\cov} }
\newcommand{\noicovnorm}[1]{ \norm{#1}_{\noicov} }
\newcommand{\Xj}{\hat{X}^{\scriptscriptstyle (J)}}
\newcommand{\Xk}{\hat{X}_k^{\scriptscriptstyle (J)} }
\newcommand{\Xh}{ \hat X^{d,\scriptscriptstyle (J)} }
\newcommand{\Xha}{ \hat X^{d, \scriptscriptstyle (J)}_\alpha }
\newcommand{\Xhvar}[1]{ \hat X^{d, \scriptscriptstyle (#1)} }
\newcommand{\Xad}{\hat X^\mathrm{a}}
\newcommand{\Xaeki}{\hat X^\mathrm{aeki}}
\newcommand{\Xasvd}{\hat x^\mathrm{asvd}}
\newcommand{\Xanys}{\hat X^\mathrm{anys}}
\newcommand{\Xaddet}{\hat x^\mathrm{a}}
\newcommand{\Aj}{ \bm A^{\scriptscriptstyle(J)} }
\newcommand{\Ank}{ {\bm {A}_k^{\scriptscriptstyle (J)}} }
\newcommand{\Ant}{ {\bm{A}^{\scriptscriptstyle (J)}_\mathrm{svd}} }
\newcommand{\Ann}{ {\bm{A}^{\scriptscriptstyle (J)}_\mathrm{nys}} }
\newcommand{\Bj}{ {\bm B^{\scriptscriptstyle(J)}} }
\newcommand{\Bk}{ {\bm{B}_k^{\scriptscriptstyle (J)}} }
\newcommand{\Ck}{\bm C_k^{\scriptscriptstyle (J)}}
\newcommand{\Cj}{\bm C^{\scriptscriptstyle (J)}}
\newcommand{\ens}{\bm U^{\scriptscriptstyle (J)}}
\newcommand{\ensmean}{ \overline{U}^{\scriptscriptstyle (J)} }
\newcommand{\xens}{\bm{X}^{\scriptscriptstyle (J)}}
\newcommand{\xensmean}{ \overline{X}^{\scriptscriptstyle (J)} }
\newcommand{\Encov}{\mathcal{C}}
\newcommand{\Anom} { \mathcal{A} }
\newcommand{\gain}{\mathcal K}
\newcommand{\gainfun}{K}
\newcommand{\wens}{ {\bm{W}^{\scriptscriptstyle (J)}} }
\newcommand{\Qj}{ {\bm{Q}^{\scriptscriptstyle (J)}} }
\newcommand{\Rj}{ \bm R^{\scriptscriptstyle (J)} }
\newcommand{\akdo}{ \alpha_{K_\delta(\omega)} }
\newcommand{\kdo}{ K_\delta(\omega) }
\newcommand{\wtik}{ \hat w }
\newcommand{\xoball}{{\overline{B}_r(x_0)}}
\newcommand{\projball}{P_r}
\newcommand{\projectedeki}{P_r \left( \Xhvar{J_k}_{\alpha_k}(\omega) \right)}
\newcommand{\goodset}{E_\mathrm{good}(\delta)}
\newcommand{\goodsetk}{E_\mathrm{good}^k}
\begin{document}

\maketitle
\thispagestyle{empty}
\begin{center}
\hspace*{5em}
\parbox[t]{12em}{\footnotesize
\hspace*{-1ex}$^1$Faculty of Mathematics\\
University of Vienna\\
Oskar-Morgenstern-Platz 1\\
A-1090 Vienna, Austria}
\hfil
\parbox[t]{17em}{\footnotesize
\hspace*{-1ex}$^2$Johann Radon Institute for Computational\\
\hspace*{1em}and Applied Mathematics (RICAM)\\
Altenbergerstraße 69\\
A-4040 Linz, Austria}
\end{center}
\begin{center}
\parbox[t]{19em}{\footnotesize
\hspace*{-1ex}$^3$Christian Doppler Laboratory\\
for Mathematical Modeling and Simulation\\
of Next Generations of Ultrasound Devices (MaMSi)\\
Oskar-Morgenstern-Platz 1\\
A-1090 Vienna, Austria}
\end{center}

\begin{abstract}
In this paper we discuss a deterministic form of ensemble Kalman inversion as a regularization method for linear inverse problems. By interpreting ensemble Kalman inversion as a low-rank approximation of Tikhonov regularization, we are able to introduce a new sampling scheme based on the Nyström method that improves practical performance. Furthermore, we formulate an adaptive version of ensemble Kalman inversion where the sample size is coupled with the regularization parameter. We prove that the proposed scheme yields an order optimal regularization method under standard assumptions if the discrepancy principle is used as a stopping criterion. The paper concludes with a numerical comparison of the discussed methods for an inverse problem of the Radon transform.
\end{abstract}

\section{Introduction}\label{sec:intro}

In recent years, \emph{ensemble Kalman inversion} (EKI) has become a popular tool for solving inverse problems \cite{IglLawStu13}. EKI has advantages against other 
iterative methods in situations where the evaluation of the forward operator is costly, and information about its adjoint or its derivative is unavailable.

While there are some recent results on the convergence of EKI as an optimization method \cite{SchiStu17b, SchiStu17, ChaTon21, Wei22_report}, the regularization theory of EKI is still incomplete.
In this paper, we provide an analysis of a deterministic form of EKI as a regularization method for solving \emph{linear inverse problems}. That is, we consider the problem of determining a solution $\xtrue$ of the linear operator equation
\begin{align}
y = L \xtrue, \label{eq:inverseProblem}
\end{align}
where $L:\XX \to \YY$ is a bounded linear operator between Hilbert spaces. We do not assume that we have access to $y$, but only to a noisy measurement
\begin{align}
\obs = y + \xi, \label{eq:noisy_data}
\end{align}
where $\xi$ is noise. 

Such an analysis is important for three reasons: First, it allows a theoretical comparison of EKI with established iterative regularization methods for inverse problems, such as the iteratively regularized Gauss-Newton \cite{Bak92} or the iteratively regularized Landweber \cite{Sch98} iteration. Secondly, it allows the transfer of knowledge between functional-analytic regularization theory, in particular the study of finite-dimensional approximation of Tikhonov regularization \cite{Gro84, NeuSch90}, and the emerging literature on ensemble methods for the solution of inverse problems (see for example \cite{Igl14} or \cite{RaaStoEve19}). Finally, this analysis can potentially serve as the basis for a generalized analysis of EKI for nonlinear inverse problems, making use of the deterministic convergence analysis of iterative regularization methods in Hilbert space (see \cite{Bak92,HanNeuSch95,KalNeuSch08}).

It was already noted in \cite{IglLawStu13} that in the case of a linear operator equation the first iteration of EKI converges to the Tikhonov regularized solution as the sample size approaches infinity. It can be shown that -- at least for the deterministic version considered in this paper -- this also holds true for all subsequent iterates, where each iterate is associated with a different choice of regularization parameter. Thus, in the linear case, EKI can be completely characterized as a stochastic low-rank approximation of Tikhonov regularization. As a consequence we can prove that under appropriate source conditions and by adapting the sample size to the regularization parameter (this method is then called adaptive EKI), we get optimal convergence rates for EKI in the sense formulated for instance in \cite{EngHanNeu96}. Moreover, we show that the efficiency of EKI can be increased by the use of more sophisticated low-rank approximation schemes, such as the Nyström method (see e.g. \cite{GitMah16}).

The paper is organized as follows:

\begin{itemize}
\item We continue this section by recalling some required notation and functional-analytic prerequisites (\autoref{sec:notation_terminology}) and providing an appropriate definition 
of the deterministic form of EKI that is considered for the rest of this paper (see \autoref{sec:eki}).
\item In \autoref{sec:direct}, we discuss deterministic EKI as an approximation to Tikhonov regularization. In particular, we derive error estimates in dependence of the regularization parameter which build the foundation for the subsequent formulation of an adaptive version. In \autoref{subsec:sampling} we review some results and methods for the low-rank approximation of operators, in particular the Nyström method. We show how these methods naturally lead to new versions of EKI. 
\item In \autoref{sec:rates} we propose an adaptive variant of EKI. The algorithm is described in \autoref{sec:adaptive_description} and analyzed as an iterative regularization method in \autoref{sec:adaptive_analysis}, where we describe conditions under which we can prove optimal convergence rates in the zero-noise limit. This constitutes our main result. Further remarks comparing the proposed scheme with similar methods from the existing literature are given in \autoref{sec:general_remarks}.
\item We conclude our paper in \autoref{sec:numerics} with numerical experiments in the context of computerized tomography. These experiments demonstrate some advantages and shortcomings of EKI for linear inverse problems. In particular, they show that the Nyström EKI method leads to considerable improvements 
in terms of numerical performance in comparsion to existing sampling methods.
\item The appendix reviews some prerequisites from probability theory, and discusses how our exposition relates to alternative formulations of EKI that have been studied elsewhere.
\end{itemize}

\subsection{Notation and terminology}\label{sec:notation_terminology}
We summarize basic notation first:
\begin{enumerate}
\item $\XX$ and $\YY$ denote real separable Hilbert spaces. 
\item $\BL(\XX;\YY)$ denotes the space of bounded linear operators from $\XX$ to $\YY$.  
\item If $L:\XX \to \YY$ is a linear operator, we let $\dom L \subset \XX$ denote its domain and $\ran L \subset \YY$ denote its range. 
\item We call $P \in \BL(\XX;\XX)$ positive if $\xinner{Px}{x} \geq 0$ for all $x \in \XX$. 
\item For a positive and self-adjoint operator $P \in \BL(\XX;\XX)$, we define the $P$-weighted norm
\begin{align*}
\norm{x}_P = \begin{cases}
\xnorm{P^{-1/2} x}, & \text{if } x \in \ran{P^{1/2}}, \\
\infty, & \text{else},
\end{cases}
\end{align*}
where the operator $P^{-1/2}$ is defined as the pseudoinverse of $P^{1/2}$, which in turn can be defined via spectral theory, see for example \cite[chapter 2.3]{EngHanNeu96}.

\item \emph{Trace class:}
We say that an operator $P \in \BL(\XX;\XX)$ is in the \emph{trace class} if for any orthonormal basis $(e_n)$ of $\XX$ we have
\begin{align*}
\sum_n \abs{\xinner{P e_n}{e_n}} < \infty.
\end{align*}
\item $(\Omega, \mathcal F, \PP)$ denotes a probability space.
\end{enumerate}

\subsection{Ensemble Kalman inversion for linear inverse problems}\label{sec:eki}

Next, we present a particular form of the EKI iteration associated to problem \eqref{eq:inverseProblem}. The original form of EKI \cite{IglLawStu13}, which we refer to as \emph{stochastic EKI}, evolves a random ensemble through an iteration where additional noise is added in each step. In the last few years, multiple variants of EKI have been developed that incorporate adaptable stepsizes \cite{KovStu19, ChaTon21} or additional regularization \cite{ChaStuTon20}. In particular, one can also formulate a deterministic version that circumvents the addition of noise by directly transforming the ensemble mean and covariance. Such a version of EKI has for example been considered in \cite{ChaTon21}. In accordance with the literature on ensemble Kalman filtering, we will refer to this as deterministic EKI \cite{TipAndBisHamWhi03, HouZha16}. A more detailed discussion of its relation to the stochastic form of EKI can be found in \autoref{sec:eki_with_perturbations}.

The EKI iteration involves two linear operators $\cov:\XX \to \XX$ and $\noicov: \YY \to \YY$ that characterize regularity assumptions on the solution $\xtrue$ and the noise $\xi$.
They have to be provided by the practitioner to represent prior information on the problem. In the rest of this article, we will assume that they satisfy the following conditions:

\begin{assumption}\label{assumption}
Let $\cov \in \BL(\XX;\XX)$ and $\noicov \in \BL(\YY;\YY)$ be injective, positive and self-adjoint linear operators such that
\begin{enumerate}
\item $\cov$ is compact,
\item $\ran{L} \subset \dom{ \noicov^{-1/2} }$, and there exists a constant $c_{RL}\in \R$ such that
\begin{equation}\label{eq:rl_condition}
\norm{R^{-1/2} L}_{\BL(\XX;\YY)} \leq c_{RL}.
\end{equation}
\end{enumerate} Moreover, we assume that the noisy data $\obs$ defined in \autoref{eq:noisy_data} satisfies $\obs \in \ran{\noicov^{1/2}}$.
\end{assumption}

As the next proposition shows, the subspace $\dom{ \cov^{-1/2} } \subset \XX$ together with the norm $\covnorm{\cdot}$ yields a Hilbert space. This space will play an important role for our analysis in \autoref{sec:rates}.

\begin{proposition}\label{c_space}
Let $\cov \in \BL(\XX;\XX)$ be an injective, positive and self-adjoint bounded linear operator. 
Let
\begin{align*}
\inner{x}{y}_{\cov} := \xinner{\cov^{-1/2}x}{\cov^{-1/2}y} \quad \text{ for all } x,y \in \dom{\cov^{-1/2}}.
\end{align*}
Then $\dom{\cov^{-1/2}}$ equipped with the inner product $\inner{\cdot}{\cdot}_{\cov}$ defines a Hilbert space, denoted by $\XX_{\cov}$. Moreover
\begin{align}
\xnorm{x} \leq \xopnorm{\cov^{1/2}} \covnorm{x} \quad \text{ for all } x \in \XX_{\cov}. \label{eq:c_norm_stronger}
\end{align}
\end{proposition}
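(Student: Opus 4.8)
The plan is to verify that $\XX_{\cov} := \dom{\cov^{-1/2}}$ with the inner product $\inner{\cdot}{\cdot}_{\cov}$ is complete, since the remaining inner-product axioms are immediate: bilinearity and symmetry follow from those of $\xinner{\cdot}{\cdot}$, and positive-definiteness follows because $\cov^{-1/2}$ is injective (being the pseudoinverse of $\cov^{1/2}$ restricted to $\ran{\cov^{1/2}} = \overline{\ran{\cov^{1/2}}}$, where density holds since $\cov$ is injective and self-adjoint). The key observation is that $\cov^{1/2} : \XX \to \XX_{\cov}$ is a surjective isometry: by definition of the $\cov$-norm, $\covnorm{\cov^{1/2} u} = \xnorm{\cov^{-1/2}\cov^{1/2} u} = \xnorm{u}$ for $u$ in the appropriate domain, and every $x \in \dom{\cov^{-1/2}} = \ran{\cov^{1/2}}$ is of this form. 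Since $\XX$ is complete, transporting completeness across this isometry gives that $\XX_{\cov}$ is complete.

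More concretely, first I would establish that $\ran{\cov^{1/2}} = \dom{\cov^{-1/2}}$ as the natural domain on which $\covnorm{\cdot}$ is finite, and that $\cov^{-1/2}\cov^{1/2}$ acts as the identity on $(\nullspace{\cov^{1/2}})^\perp = \XX$ (using injectivity of $\cov$, hence of $\cov^{1/2}$). This yields the isometry property stated above and, in particular, $\covnorm{x} = \xnorm{\cov^{-1/2} x}$ for all $x \in \XX_\cov$. Then, given a Cauchy sequence $(x_n)$ in $\XX_{\cov}$, I would set $u_n := \cov^{-1/2} x_n \in \XX$; the isometry shows $(u_n)$ is Cauchy in $\XX$, so it converges to some $u \in \XX$, and then $x_n = \cov^{1/2} u_n \to \cov^{1/2} u =: x$ in $\XX_\cov$, again by the isometry. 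Hence $x \in \XX_\cov$ and the limit exists, proving completeness.

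Finally, the norm inequality \eqref{eq:c_norm_stronger} follows directly: for $x \in \XX_{\cov}$ write $x = \cov^{1/2} u$ with $u = \cov^{-1/2} x$, so that $\xnorm{x} = \xnorm{\cov^{1/2} u} \leq \xopnorm{\cov^{1/2}} \xnorm{u} = \xopnorm{\cov^{1/2}} \covnorm{x}$. I do not anticipate a genuine obstacle here; the one point requiring care is the bookkeeping around the pseudoinverse $\cov^{-1/2}$ — specifically that, because $\cov$ (and hence $\cov^{1/2}$) is injective, the pseudoinverse has dense domain $\ran{\cov^{1/2}}$, its kernel correction is trivial, and $\cov^{-1/2}\cov^{1/2} = \Idx$ holds on all of $\XX$ rather than merely on a subspace. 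Citing the spectral-theoretic construction of $\cov^{-1/2}$ as in \cite[chapter 2.3]{EngHanNeu96} makes these facts routine.
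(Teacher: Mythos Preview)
Your proposal is correct and follows essentially the same approach as the paper: verify the inner-product axioms via injectivity and self-adjointness of $\cov^{-1/2}$, derive \eqref{eq:c_norm_stronger} from boundedness of $\cov^{1/2}$, and obtain completeness from that of $\XX$. The paper simply asserts completeness as ``a direct consequence of the completeness of $\XX$'' without spelling out the isometry argument, so your version is in fact more explicit on this point.
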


\begin{proof}
The bilinear form $\inner{\cdot}{\cdot}_{\cov}$ is well-defined on $\dom{\cov^{-1/2}} = \ran{\cov^{1/2}}$ because $\cov$ is injective. Furthermore, this bilinear form is symmetric and positive semidefinite because $\cov^{-1/2}$ is self-adjoint and positive. The definiteness follows from the injectivity of $\cov^{-1/2}$.
\autoref{eq:c_norm_stronger} follows from the boundedness of $\cov$, since we have
\begin{align*}
\xnorm{x} \leq \xopnorm{\cov^{1/2}} \xnorm{\cov^{-1/2} x} = \xopnorm{\cov^{1/2}} \covnorm{x} \quad \text{for all } x \in \dom{\cov^{-1/2}}.
\end{align*}
Finally, the completeness of $\dom{\cov^{-1/2}}$ with respect to $\norm{\cdot}_{\cov}$ is a direct consequence of the completeness of $\XX$.
\end{proof}

\begin{remark}
At this point, we want to stress that the operator $\noicov$ does \emph{not} correspond to the assumption that $\xi = \obs - y$ is a Gaussian random element of $\YY$ with covariance $\noicov$. In fact, in the case where $\YY$ is infinite-dimensional, one can show that $\noicovnorm{\xi} = \infty$ with probability 1 (see \cite[theorem 2.4.7]{Bog98}). The proper interpretation of $\noicov$ is that it determines a subspace $\YY_R \subset \YY$ in which $\xi$ is assumed to lie (see \autoref{c_space}).
\end{remark}

Before we continue with the description of the deterministic EKI iteration, we present an illustrative example for a choice of the operators $C_0$ and $R$ that is often used in practice.

\begin{example} \label{ex:c0}
If we let $\XX = L^2(D)$ and $\YY = L^2(E)$, where $D \subset \R^{d_1}$ and $E \subset \R^{d_2}$ are bounded domains with piecewise smooth boundaries. Consider the choice $\cov = (\Idx - \Delta)^{-1}$ and $R = \Idy - \Delta$. Then the operator $\cov$ is compact. Here, $(\Idx - \Delta)^{-1}$ is the operator which maps a given function $\rho \in \XX$ onto the weak solution of the equation
\begin{equation*}
\begin{aligned} 
(\Idx - \Delta)u &= \rho \text{ in } D,\\
\frac{\partial u}{\partial n} &= 0 \text{ on } \partial D,
\end{aligned}
\end{equation*}
The range of $\cov^{1/2}$ is $H^1(D)$, i.e. the Sobolev space of first order. It is easy to see that $\cov^{-1}$ is positive and self-adjoint, and thus so is $\cov$.
We also have
\begin{equation*}
\norm{u}_{\cov}^2 = \int_D \left((I - \Delta)^{1/2} u\right)^2 d \vec{x} = 
\int_D u \left((\Idx - \Delta)u \right) d \vec{x} = \int_D u^2 + \abs{\nabla u}^2 d \vec{x} = \norm{u}_{H^1(D)}^2.
\end{equation*}
Similarly
\begin{equation*}
\noicovnorm{v}^2 = \norm{v}_{H^{-1}(E)}^2,
\end{equation*}
where $H^{-1}(E)$ denotes the dual space of $H^1(E)$.
\end{example}

The fundamental difference of ensemble methods to existing regularization methods is the use of a stochastic low-rank approximation of $\cov$, which reduces the effective dimension of the parameter space $\XX$. The next definition gives this notion a precise meaning.

\begin{definition}[Low-rank approximation] \label{de:lra}
Let $\cov \in \BL(\XX;\XX)$ be a self-adjoint, positive and compact linear operator and let $\gamma > 0$.
\begin{enumerate}
\item Let $(\Aj)_{J=1}^\infty$ be a family of bounded linear operators with $\Aj \in \BL(\R^J;\XX)$ for all $J \in \N$. We say that it generates a \emph{deterministic low-rank approximation} of $C_0$, \emph{of order $\gamma$}, if there exists a constant $\nu$ such that
\begin{align*}
\xopnorm{\Aj {\Aj}^* - \cov} \leq \nu J^{-\gamma} \qquad \text{ for all } J \in \N.
\end{align*}
\item Let $p \in [1,\infty)$ and $(\Aj)_{J=1}^\infty$ be a family of random bounded linear operators (see \autoref{sec:appendix}) with $\Aj(\omega) \in \BL(\R^J;\XX)$ for all $\omega \in \Omega$ and $J \in \N$. We say that it generates a \emph{stochastic low-rank approximation} of $\cov$, \emph{of $p$-order $\gamma$}, if there exists a constant $\nu_p$ such that
\begin{align*}
\Lpxopnorm{\Aj {\Aj}^* - \cov}{p} \leq \nu_p J^{-\gamma} \qquad \text{ for all } J \in \N.
\end{align*}

\end{enumerate}
\end{definition}

Under \autoref{assumption}, the following algorithm is well-defined, for all $k \in \N$.

\begin{definition}[Deterministic EKI] \label{de:EKI} Let $(\Aj)_{J=1}^\infty$ generate a low-rank approximation of $\cov$, and let $\obs \in \YY$, $J \in \N$, and an initial guess $\xo \in \XX$ be given. 
\begin{itemize}
\item \emph{Initialization:} Set $\Xj_0 := x_0$ and $\Aj_0 := \Aj$.
\item \emph{Iteration ($k \to k+1$):} Let $\Bk = \noicov^{-1/2} L \Ank : \R^J \to \YY$, and set
\begin{align}
& \Xj_{k+1} = \Xk + \Ank \left( \Bk^* \Bk + \Ij \right)^{-1} \Bk^* \noicov^{-1/2} (\obs - L \Xk),\label{eq:eki1}\\
\text{and} \quad & \Aj_{k+1} = \Ank \left( \Bk^* \Bk + \Ij \right)^{-1/2},\label{eq:eki2}
\end{align}
where $\Ij \in \R^{J\times J}$ denotes the identity matrix and $\Bk^*:\YY \to \R^J$ denotes the adjoint of $\Bk$.
\end{itemize}
\end{definition}

Note that the adjective "deterministic" in \autoref{de:EKI} refers only to the update formula, which -- in contrast to the original, stochastic EKI iteration (see \autoref{de:EKI_perturbed}) -- does not introduce additional noise. Even if a stochastic low-rank approximation is used in \autoref{de:EKI}, we will refer to the resulting method as deterministic EKI. In this case, the algorithm is defined pointwise, for every $\omega \in \Omega$. That is, the quantities $\Xk$, $\Ank$ and $\Bk$ all depend on $\omega$. 
For the rest of this paper, we will suppress this dependence. 
This allows us to treat both deterministic and stochastic low-rank approximations at once.

\begin{remark}\label{rem:covariance_form}
We have introduced the EKI update equations (\autoref{eq:eki1}-\autoref{eq:eki2}) in the so-called \emph{square-root form}. It is equivalent (see e.g. \cite{TipAndBisHamWhi03}) to the so-called \emph{covariance form} which is more widespread in the literature on the Kalman filter and given by
\begin{equation}\label{eq:covariance_form}
\begin{aligned}
& \Xj_{k+1} = \Xk + \Ck L^* \left(L \Ck L^* + \noicov \right)^{-1} (\obs - L \Xk),\\
& \Cj_{k+1} = \Ck - \Ck L^* \left(L \Ck L^* + \noicov \right)^{-1}L \Ck.
\end{aligned}
\end{equation}
The operator $\Ck$ is related to $\Ank$ from \autoref{de:EKI} via the identity $\Ck = \Ank \Ank^*$, which holds for all $k \in \N$. The computational difference between these two formulations is that the square-root form requires the inversion of an operator on $\R^J$, while the covariance form requires inversion of an operator on $\YY$.
\end{remark}

The existing literature on EKI focuses mostly on the case where the low-rank approximation $(\Aj)_{J=1}^\infty$ is generated by the so-called \emph{anomaly operator} of an ensemble $\ens$ of random elements -- thus the name ``\emph{ensemble} Kalman inversion''. That is, one uses $\Aj = \Anom(\ens)$, where $\Anom(\ens)$ is defined as follows:

\begin{definition}[Ensemble anomaly]\label{de:anomaly}
A $J$-tuple $\ens = (U_1,\ldots,U_J)$ of random elements $U_1^{(J)},\ldots,U_J^{(J)}$ of $\XX$ is called a \emph{random ensemble}. We call the random element 
\begin{equation} \label{eq:mean}
  \ensmean := \frac{1}{\sqrt{J}} \sum_{j=1}^J U_j
\end{equation}
the \emph{ensemble mean}. Furthermore, we call the random continuous linear operator from $\R^J$ to $\XX$ (see \autoref{sec:appendix}) defined by 
\begin{equation} \label{eq:anamoly}
\Anom(\ens)v := \frac{1}{\sqrt J} \sum_{j=1}^J v_j (U_j - \ensmean) \qquad \text{for all } v \in \R^J,
\end{equation}
the \emph{ensemble anomaly}.
\end{definition}

We will see in \autoref{subsec:sampling} that $(\Anom(\ens))_{J=1}^\infty$ generates a stochastic low-rank approximation of $\cov$ if $U_1,\ldots,U_J$ are independent Gaussian random elements with $\Cov{U_j^{(J)}} = \cov$, for all $j=1,\ldots,J$. However, the more general \autoref{de:EKI} allows us to consider other forms of low-rank approximations, in particular also deterministic ones (see \autoref{subsec:sampling}).

\begin{remark}
The update \autoref{eq:eki1} can also be expressed as the solution to a minimization problem, since for all $k \in \N$, $\Xj_{k+1}$ is the minimizer 
of the functional 
\begin{equation} \label{eq:galerkin}
x \in \mathcal{D}(\Ank^*) \longmapsto \norm{L x - \obs}^2_R + \norm{x-\Xk}^2_{\Ank(\Ank)^*},
\end{equation}
which is well-defined due to \autoref{assumption}.
\end{remark}

\section{EKI as approximate Tikhonov regularization}

\subsection{Direct EKI} \label{sec:direct}

Ensemble Kalman methods originated in data assimilation \cite{Eve94} and are traditionally applied to state estimation in dynamical systems \cite{NakPot15} \cite{ReiCot15}. Following this logic, EKI, which has been developed for the treatment of inverse problems, is often analyzed as a nonstationary regularization method with multiple steps, where the iteration number $k$ controls the amount of regularization. For the deterministic version of EKI given by \autoref{eq:eki1} and \autoref{eq:eki2}, one can actually show that multiple iterations with initial covariance operator $\cov$ are equivalent to a single iteration with covariance operator $\tilde \cov = \frac{1}{k} \cov$. This result can be seen as direct consequence of the classical equivalence of the Kalman filter to four-dimensional variational data assimilation (4D-VAR) \cite{RauTunStr65}.

\begin{theorem}\label{generatorThm} Let $\Bj := \noicov^{-1/2} L \Aj$, and let $(X_k)_{k=1}^\infty$ denote the EKI iteration as defined in \autoref{de:EKI}. Then, the following representation holds
\begin{align}
& \Xk = \xo + \Aj \left( \Bj^* \Bj + k^{-1} \Ij \right)^{-1} \Bj^* \noicov^{-1/2} (\obs - L \xo) \qquad \text{for all } k \in \N. \label{eq:entik1}
\end{align}
\end{theorem}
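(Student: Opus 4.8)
The plan is to prove \autoref{eq:entik1} by induction on $k$. The essential structural point is that, for a fixed ensemble size $J$, every operator produced by the iteration has the form $\Aj\,g(\Bj^*\Bj)$ or $\noicov^{-1/2}L\Aj\,g(\Bj^*\Bj) = \Bj\,g(\Bj^*\Bj)$, where $g$ is a scalar function applied, via the finite-dimensional spectral theorem, to the positive semidefinite $J\times J$ matrix $\Bj^*\Bj$. Here it is part \textit{(ii)} of \autoref{assumption} that makes $\Bj = \noicov^{-1/2}L\Aj$ a genuinely \emph{bounded} operator, hence $\Bj^*\Bj$ a genuine matrix and $k\,\Bj^*\Bj + \Ij$ boundedly invertible. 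Since all such functions of $\Bj^*\Bj$ commute, every manipulation below collapses to an elementary scalar identity in the eigenvalues of $\Bj^*\Bj$.

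The first step is a companion formula for the operators generated by \autoref{eq:eki2}: I would show by a short induction that $\Ank = \Aj\,(k\,\Bj^*\Bj + \Ij)^{-1/2}$ for all $k$ (with $k=0$ being the initialization $\Aj_0 = \Aj$ from \autoref{de:EKI}), and hence $\Bk = \Bj\,(k\,\Bj^*\Bj+\Ij)^{-1/2}$. The induction step rests on the commuting identity $\Bk^*\Bk + \Ij = (k\,\Bj^*\Bj+\Ij)^{-1}\bigl((k+1)\,\Bj^*\Bj+\Ij\bigr)$, whose square-root inverse, inserted into \autoref{eq:eki2}, telescopes with the factor $(k\,\Bj^*\Bj+\Ij)^{-1/2}$ already present in $\Ank$ to leave exactly $\Aj\bigl((k+1)\,\Bj^*\Bj+\Ij\bigr)^{-1/2}$.

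Given this, I would run the main induction on the mean. Abbreviating $T := \Bj^*\Bj$ and $w := \Bj^*\noicov^{-1/2}(\obs - L\xo)$, and using $(\Bj^*\Bj + k^{-1}\Ij)^{-1} = k(kT+\Ij)^{-1}$, the assertion \autoref{eq:entik1} is equivalent to $\Xk = \xo + k\,\Aj(kT+\Ij)^{-1}w$, which for $k=1$ is precisely one application of \autoref{de:EKI} starting from $\Xj_0 = \xo$. For the step, I would first compute the residual $\noicov^{-1/2}(\obs - L\Xk) = \noicov^{-1/2}(\obs - L\xo) - k\,\Bj(kT+\Ij)^{-1}w$, then observe that, by the companion formula and commutativity, $\Ank(\Bk^*\Bk + \Ij)^{-1}\Bk^* = \Aj\bigl((k+1)T+\Ij\bigr)^{-1}\Bj^*$; applying this to the residual and using $\Bj^*\Bj = T$, the update \autoref{eq:eki1} becomes $\Xj_{k+1} = \Xk + \Aj\bigl((k+1)T+\Ij\bigr)^{-1}(kT+\Ij)^{-1}w$. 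Adding the induction hypothesis for $\Xk$ and invoking the scalar identity $(k+1)(kT+\Ij) = k\bigl((k+1)T+\Ij\bigr) + \Ij$ — which, after multiplying through by the commuting invertible factors $(kT+\Ij)$ and $\bigl((k+1)T+\Ij\bigr)$, collapses the resulting bracket to $(k+1)\bigl((k+1)T+\Ij\bigr)^{-1}$ — yields \autoref{eq:entik1} with $k$ replaced by $k+1$, closing the induction.

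There is no genuine obstacle beyond careful bookkeeping: keeping straight which operator lives on $\R^J$, on $\XX$, or on $\YY$, and being explicit that part \textit{(ii)} of \autoref{assumption} is what legitimizes the functional calculus in $\Bj^*\Bj$. Conceptually, \autoref{eq:entik1} is the operator-level analogue of the classical fact that $k$-fold iterated Tikhonov regularization coincides with a single Tikhonov step whose regularization parameter is divided by $k$; one could alternatively deduce it from that identity applied on $\ran{\Aj}$, but the self-contained induction sketched above seems the most direct route.
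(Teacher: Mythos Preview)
Your induction is correct in all essentials: the companion formula $\Ank = \Aj(k\,\Bj^*\Bj+\Ij)^{-1/2}$ does follow from \autoref{eq:eki2} by the telescoping you describe, the gain simplification $\Ank(\Bk^*\Bk+\Ij)^{-1}\Bk^* = \Aj\bigl((k{+}1)T+\Ij\bigr)^{-1}\Bj^*$ is right, and the algebraic identity $(k{+}1)(kT+\Ij) = k\bigl((k{+}1)T+\Ij\bigr)+\Ij$ indeed collapses the sum of the old mean and the increment to the desired $(k{+}1)$-formula. The invocation of \autoref{assumption}\,\textit{(ii)} to ensure $\Bj$ is bounded and hence $\Bj^*\Bj$ is a genuine $J\times J$ matrix admitting functional calculus is also the right point to flag.

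The paper, however, takes a different route: it simply invokes \cite[theorem~5.4.7]{NakPot15}, the standard equivalence between the Kalman filter and 4D-VAR, specialized to trivial dynamics $M=\Idx$ and constant observation operator $H_\xi = L$, $f^{(\xi)}=\obs$. In that setting the $k$-step Kalman smoother coincides with a single least-squares problem whose solution is exactly \autoref{eq:entik1}. Your self-contained induction has the virtue of not depending on an external reference and of making the functional-calculus structure on $\R^J$ fully explicit; the paper's citation has the virtue of situating the result within the classical data-assimilation literature and avoiding the bookkeeping entirely. Either is perfectly adequate here.
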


\begin{proof}

Follows from \cite[theorem 5.4.7]{NakPot15} by setting $M=\Idx$, $H_\xi = L$ and $f^{(\xi)} = \obs$ for $\xi=1,\ldots, k$.

\end{proof}

A first consequence of \autoref{generatorThm} is that it allows us to embed EKI into a parameter-dependent family of operators, which we will call direct EKI:

\begin{definition}[Direct EKI]\label{de:direct}
Suppose that \autoref{assumption} holds, and let $\alpha > 0$. Then, we define the \emph{direct EKI} in the following way
\begin{equation} \label{eq:eki_alpha1}
\begin{aligned}
\Xha & := \xo + \gainfun_\alpha(\Aj) (\obs - L \xo),\\
\text{where} \quad \gainfun_\alpha(\Aj) & := \Aj \left({\Aj}^* L^* \noicov^{-1} L \Aj + \alpha\Ij \right)^{-1} {\Aj}^* L^*R^{-1}.
\end{aligned}
\end{equation}
\end{definition}

According to \autoref{eq:entik1}, we have 
\begin{equation} \label{eq:emb_versus}
\Xh_{1/k} = \Xk.
\end{equation}
That is, the $k$-th iterate of deterministic EKI is equivalent to direct EKI with the choice $\alpha = 1/k$.


Next, we derive error estimates between direct EKI and Tikhonov regularization in terms of the sample size $J$ and the regularization parameter $\alpha$. To this end, let us recall the notion of the Tikhonov-regularized solution of \autoref{eq:inverseProblem}.

\begin{definition}[Tikhonov regularization]\label{de:tikhonov}
Let \autoref{assumption} hold. Then 
the unique minimizer of 
\begin{equation}\label{eq:tikhonov_optimality}
x \in \XX \longmapsto \norm{\obs - Lx}_R^2 + \alpha \norm{x-\xo}_{\cov}^2 
\end{equation}
is called the \emph{Tikhonov regularized solution of \autoref{eq:inverseProblem} according to the data $\obs$ and the regularization parameter $\alpha$}. It is denoted 
with $\xtika$ and explicitly represented by
\begin{equation} \label{eq:tikhonov_formula}
\begin{aligned}
\xtika & := \xo + \gain_\alpha (\obs - L \xo), \\ 
\text{where} \quad \gain_\alpha & := \cov^{1/2} \left(\cov^{1/2} L^* \noicov^{-1} L \cov + \alpha \Idx \right)^{-1} \cov^{1/2} L^* R^{-1},
\end{aligned}
\end{equation}
and where $\Idx:\XX \to \XX$ is the identity operator.
\end{definition}

\begin{remark}
We emphasize the notational difference between \autoref{eq:eki_alpha1} and \autoref{eq:tikhonov_formula} that $\Idx$ denotes the identity operator on 
$\XX$ while $\Ij \in \R^{J \times J}$ denotes the identity matrix for $\R^J$.
\end{remark}

\begin{example}\label{ex:r}
Consider again \autoref{ex:c0}. In that case \autoref{eq:tikhonov_optimality} becomes 
\begin{equation*}
x \in \XX \longmapsto \norm{\obs - Lx}_{H^{-1}(E)}^2 + \alpha \norm{x-\xo}_{H^1(D)}^2.
\end{equation*}
\end{example}

If we compare \autoref{eq:eki_alpha1} and \autoref{eq:tikhonov_formula}, we observe that the main difference between Tikhonov regularization and direct EKI is the replacement of the operator $\gain_\alpha$ (Tikhonov) by a low-rank approximation $\gainfun_\alpha(\Aj)$ (direct EKI). In the following the 
difference between the random element $\Xha$ and the Tikhonov regularized solution $\xtika$ is estimated.

\begin{lemma}[Tikhonov versus direct EKI] \label{ekiEstimate}
Let $\alpha > 0$, $p \in [1,\infty)$, and suppose that \\ \autoref{assumption} holds. Then there exists a constant $c$, independent of $J$, such that
\begin{equation}\label{eq:ekiErrorEstimate}
\xnorm{\Xha - \xtika} \leq c \cdot \phi(\alpha)  \xopnorm{\Aj {\Aj}^* - \cov} \qquad \text{for all } J \in \N,
\end{equation}
where $\phi(\alpha) := \max(\alpha^{-1}, \alpha^{-2})$.
\end{lemma}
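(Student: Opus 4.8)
The plan is to write both $\Xha$ and $\xtika$ explicitly using their defining formulas \eqref{eq:eki_alpha1} and \eqref{eq:tikhonov_formula}, and to express the difference as a telescoping sum that isolates the single replacement of $\cov^{1/2}$ by $\Aj$ (equivalently of $\cov$ by $\Aj{\Aj}^*$). Write $M := \noicov^{-1/2}L$, so $M \in \BL(\XX;\YY)$ with $\norm{M}_{\BL(\XX;\YY)} \le c_{RL}$ by \autoref{assumption}. Then $\gainfun_\alpha(\Aj) = \Aj({\Aj}^* M^* M \Aj + \alpha\Ij)^{-1}{\Aj}^* M^* \noicov^{-1/2}$ and $\gain_\alpha = \cov^{1/2}(\cov^{1/2}M^*M\cov^{1/2} + \alpha\Idx)^{-1}\cov^{1/2}M^*\noicov^{-1/2}$. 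Since $\obs \in \ran{\noicov^{1/2}}$ and $\ran L \subset \dom{\noicov^{-1/2}}$, the quantity $z := M^*\noicov^{-1/2}(\obs - L\xo) \in \XX$ is well-defined, with $\xnorm{z}$ bounded independently of $J$; so it suffices to estimate $\xopnorm{\Aj({\Aj}^*M^*M\Aj + \alpha\Ij)^{-1}{\Aj}^* - \cov^{1/2}(\cov^{1/2}M^*M\cov^{1/2} + \alpha\Idx)^{-1}\cov^{1/2}}$.

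The key algebraic identity is that for a bounded operator $T$ (here $T = M^*M \in \BL(\XX;\XX)$, positive self-adjoint with $\norm T \le c_{RL}^2$) and any $B \in \BL(\R^J;\XX)$ one has $B(B^*TB + \alpha\Ij)^{-1}B^* = (B B^* T + \alpha\Idx)^{-1}B B^*$; this is the standard "push-through" identity, verified by multiplying out. Applying it with $B = \Aj$ and noting that with $B = \cov^{1/2}$ (allowing $J=\infty$ formally, or rather directly manipulating $\gain_\alpha$) one likewise gets $\cov^{1/2}(\cov^{1/2}T\cov^{1/2}+\alpha\Idx)^{-1}\cov^{1/2} = (\cov T + \alpha\Idx)^{-1}\cov$, reduces the problem to bounding
\[
\xopnorm{(\Aj{\Aj}^* T + \alpha\Idx)^{-1}\Aj{\Aj}^* - (\cov T + \alpha\Idx)^{-1}\cov}.
\]
Now set $S_1 := \Aj{\Aj}^*$ and $S_2 := \cov$, both positive self-adjoint, and use the resolvent-type identity
\[
(S_1 T + \alpha\Idx)^{-1}S_1 - (S_2 T + \alpha\Idx)^{-1}S_2 = \alpha\,(S_1 T + \alpha\Idx)^{-1}(S_1 - S_2)(T S_2 + \alpha\Idx)^{-1},
\]
which again is checked by clearing denominators on the left (writing $S_1 - S_2 = (S_1 T + \alpha\Idx)\alpha^{-1}S_2 - S_1\alpha^{-1}(T S_2 + \alpha\Idx)$ after suitable rearrangement). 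The two resolvent factors $(S_i T + \alpha\Idx)^{-1}$ are bounded by $\alpha^{-1}$ in operator norm, since $S_i T$ — although not self-adjoint — is similar to the positive self-adjoint operator $S_i^{1/2} T S_i^{1/2}$, so its spectrum lies in $[0,\infty)$ and $\norm{(S_i T + \alpha\Idx)^{-1}} \le \alpha^{-1}$. Hence the displayed difference is bounded by $\alpha\cdot\alpha^{-1}\cdot\xopnorm{S_1 - S_2}\cdot\alpha^{-1} = \alpha^{-1}\xopnorm{\Aj{\Aj}^* - \cov}$, and tracking the bounded factor $\xnorm z$ (which absorbs $c_{RL}$ and $\norm{\noicov^{-1/2}(\obs - L\xo)}_\YY$ into the constant $c$) yields \eqref{eq:ekiErrorEstimate}.

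The main obstacle I anticipate is the spectral bound $\xopnorm{(S T + \alpha\Idx)^{-1}} \le \alpha^{-1}$ for the \emph{non-self-adjoint} operator $S T$: one must argue carefully via the similarity $S T = S^{1/2}(S^{1/2}TS^{1/2})S^{-1/2}$ on $\ran{S^{1/2}}$ (using that $S^{1/2}$ is injective with dense range when $S = \cov$, and handling $S = \Aj{\Aj}^*$ which is finite-rank and may be non-injective by restricting to $\ran{\Aj}$ and using the direct-sum decomposition). A cleaner route, which I would prefer, is to avoid the similarity argument entirely and instead go back through the "square-root" forms: bound $\xopnorm{\Aj({\Aj}^*T\Aj+\alpha\Ij)^{-1}{\Aj}^* - \cov^{1/2}(\cov^{1/2}T\cov^{1/2}+\alpha\Idx)^{-1}\cov^{1/2}}$ directly by inserting and subtracting the mixed term $\Aj({\Aj}^*T\Aj+\alpha\Ij)^{-1}{\Aj}^*$ vs. $\cov^{1/2}(\cdots)^{-1}{\Aj}^*$ etc., using that $\xopnorm{B(B^*TB+\alpha\Ij)^{-1/2}} \le \alpha^{-1/2}$ and $\xopnorm{(B^*TB+\alpha\Ij)^{-1/2}B^*} \le \alpha^{-1/2}$ for any such $B$ (these follow from $\norm{(B^*TB+\alpha\Ij)^{-1/2}B^*x}^2 = \inner{(B^*TB+\alpha\Ij)^{-1}B^*x}{B^*x} \le \alpha^{-1}\norm{B^*x}^2 \le \alpha^{-1}\norm{x}^2$ after bounding $\norm{B^*x} \le \norm{Bx}$... — more precisely $\norm{B^*x}^2 = \inner{BB^*x}{x} \le \xopnorm{BB^*}\norm x^2$, but the needed estimate is the sharper $\norm{(B^*TB+\alpha\Ij)^{-1/2}B^*x}^2 \le \alpha^{-1}\norm{Bx}^2$ via $B^*TB + \alpha\Ij \ge \alpha\Ij$, hence $\le \alpha^{-1}\xopnorm{BB^*}^{1/2}\cdots$), and this keeps every operator genuinely self-adjoint. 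Either way the constant $c$ ends up depending only on $c_{RL}$, $\xopnorm{\cov^{1/2}}$, $\xopnorm{\Aj}$ (uniformly bounded in $J$ by \autoref{de:lra}), and $\norm{\noicov^{-1/2}(\obs - L\xo)}_\YY$, so it is independent of $J$ as claimed.
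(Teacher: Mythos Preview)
Your plan and the paper's proof diverge at the crucial push-through step. You push the resolvent to the \emph{domain} side, writing
\[
\Aj(\Aj^*T\Aj+\alpha\Ij)^{-1}\Aj^* = (\Aj\Aj^*\,T+\alpha\Idx)^{-1}\Aj\Aj^*,
\]
and then compare $(S_1T+\alpha\Idx)^{-1}S_1$ with $(S_2T+\alpha\Idx)^{-1}S_2$ via your resolvent-type identity. That identity is correct and elegantly produces the extra factor $\alpha$, but, as you yourself anticipate, the two flanking resolvents $(S_1T+\alpha\Idx)^{-1}$ and $(TS_2+\alpha\Idx)^{-1}$ involve the \emph{non-self-adjoint} products $S_iT$. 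The bound $\xopnorm{(ST+\alpha\Idx)^{-1}}\le \alpha^{-1}$ is \emph{false} in general for positive self-adjoint $S,T$ (already in $2\times 2$ examples the norm can exceed $\alpha^{-1}$), and your similarity argument $ST=S^{1/2}(S^{1/2}TS^{1/2})S^{-1/2}$ does not give a $J$-uniform constant: for $S=\Aj\Aj^*$ the operator $S^{-1/2}$ does not exist on all of $\XX$ (finite rank), and for $S=\cov$ the operator $\cov^{-1/2}$ is unbounded, so the conjugation does not control the operator norm. Your ``cleaner route'' at the end is only sketched and runs into the same difficulty of comparing square roots $\Aj$ and $\cov^{1/2}$ that act on different spaces.

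The paper sidesteps the whole issue by pushing the resolvent to the \emph{range} side instead: using $(A^*A+\alpha I)^{-1}A^*=A^*(AA^*+\alpha I)^{-1}$ with $A=\Bj=R^{-1/2}L\Aj$ (resp.\ $A=B=R^{-1/2}L\cov^{1/2}$) one gets
\[
\gainfun_\alpha(\Aj)=\Aj\Aj^*\,L^*R^{-1/2}\bigl(\Bj\Bj^*+\alpha\Idy\bigr)^{-1}R^{-1/2},
\qquad
\gain_\alpha=\cov\,L^*R^{-1/2}\bigl(BB^*+\alpha\Idy\bigr)^{-1}R^{-1/2},
\]
so that the only resolvents appearing are $(\Bj\Bj^*+\alpha\Idy)^{-1}$ and $(BB^*+\alpha\Idy)^{-1}$ on $\YY$, which \emph{are} self-adjoint and admit the clean bound $\alpha^{-1}$. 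The difference is then telescoped into a term $(\Aj\Aj^*-\cov)L^*R^{-1/2}(\Bj\Bj^*+\alpha\Idy)^{-1}$ plus a resolvent-difference term, and everything is estimated with genuinely self-adjoint spectral calculus. That choice of which side to push to is the missing idea in your proposal; once you work on $\YY$ the obstacle you flagged disappears.
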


\begin{proof}
By \autoref{eq:tikhonov_formula} and \autoref{eq:eki_alpha1} we have
\begin{equation} \label{eq:eki_versus_tikhonov}
\Xha - \xtika = (\gainfun_\alpha(\Aj) R^{1/2} - \gain_\alpha R^{1/2}) R^{-1/2}(\obs - L \xo). 
\end{equation}
Using spectral theory, one can show
\begin{align}
\xopnorm{(P + \alpha \Idx)^{-1}} & \leq \alpha^{-1},\label{eq:spectral_estimate1} \\
\text{and} \qquad \xopnorm{(P + \alpha \Idx)^{-1} - (Q + \alpha \Idx)^{-1}} & \leq \alpha^{-2} \xopnorm{P-Q}, \label{eq:spectral_estimate2}
\end{align}
for all positive and self-adjoint bounded linear operators $P$ and $Q$ (see \cite[section 2.3]{EngHanNeu96}). Furthermore, recall that every linear operator $A$ satisfies the identity $(A^*A + \alpha \Idx)^{-1}A^* = A^* (AA^* + \alpha \Idy)^{-1}$ if one of these expressions is well-defined. With this, one can show that
\begin{align*}
\gainfun_\alpha(\Aj) &= \Aj {\Aj}^* L^* R^{-1/2} \left(\Bj \Bj^* + \alpha \Idy \right)^{-1} R^{-1/2},\\
\text{and} \quad \gain_\alpha & = \cov^* L^* R^{-1/2} \left(B B^* + \alpha \Idy \right)^{-1} R^{-1/2},
\end{align*}
where we used the notation $\Bj := R^{-1/2} L \Aj$ and $B := R^{-1/2} L C_0^{1/2}$ for brevity. These identities imply
\begin{align*}
\gainfun_\alpha(\Aj) - \gain_\alpha & = (\Aj {\Aj}^* - \cov) R^{-1/2} L \left(\Bj \Bj^* + \alpha \Idy \right)^{-1} R^{-1/2} \\
& \quad + \cov L^* R^{-1/2} \left[ \left(\Bj \Bj^* + \alpha \Idy \right)^{-1} - \left(B B^* + \alpha \Idy \right)^{-1}\right]R^{-1/2}.
\end{align*}
Taking norms and using \autoref{eq:spectral_estimate1}, \autoref{eq:spectral_estimate2}, and \autoref{eq:rl_condition}, we then obtain
\begin{align*}
\norm{\gainfun_\alpha(\Aj)R^{1/2} - \gain_\alpha R^{1/2}}_{\BL(\YY;\XX)} \leq c_{RL} (1 + c_{RL}^2 \xopnorm{C_0}\alpha^{-1}) \alpha^{-1} \xopnorm{\Aj {\Aj}^* - \cov}.
\end{align*}
Taking norms in \autoref{eq:eki_versus_tikhonov} and inserting this estimate proves the assertion.
\end{proof}

This lemma shows that the difference between Tikhonov regularization and direct EKI can be bounded in terms of the difference between the operators $\Aj {\Aj}^*$ and $\cov$. If this difference decreases with a certain rate with respect to $J$, then direct EKI converges to Tikhonov regularization with the same rate. 

\begin{proposition}[Convergence of EKI to Tikhonov]\label{large_ensemble_convergence} Let \autoref{assumption} hold. 
\begin{enumerate}
\item If $(\Aj)_{J=1}^\infty$ generates a deterministic low-rank approximation of $\cov$ of order $\gamma$, then there exists a constant $\kappa$ such that
\begin{align*}
\xnorm{\Xha - \xtika} \leq \kappa \phi(\alpha) J^{-\gamma} \qquad \text{for all } \alpha > 0 \text{ and all } J \in \N.
\end{align*}
\item  Let $p \in [1,\infty)$. If $(\Aj)_{J=1}^\infty$ generates a stochastic low-rank approximation of $\cov$ of $p$-order $\gamma$, then there exists a constant $\kappa_p$ such that
\begin{align*}
\Lpxnorm{\Xha - \xtika}{p} \leq \kappa_p \phi(\alpha) J^{-\gamma} \qquad \text{for all } \alpha > 0 \text{ and all } J \in \N.
\end{align*}
\end{enumerate}
\end{proposition}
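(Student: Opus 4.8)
The plan is to combine \autoref{ekiEstimate} with \autoref{de:lra}; both claims then follow almost immediately, the only point requiring care being that the constant $c$ produced by \autoref{ekiEstimate} is genuinely deterministic (independent of $\omega$, not merely of $J$).

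For part \textit{(i)}, \autoref{ekiEstimate} gives $\xnorm{\Xha - \xtika} \leq c \alpha^{-1} \xopnorm{\Aj {\Aj}^* - \cov}$ with $c$ independent of $J$. Since $(\Aj)_{J=1}^\infty$ generates a deterministic low-rank approximation of $\cov$ of order $\gamma$, \autoref{de:lra}\textit{(i)} supplies a constant $\nu$ with $\xopnorm{\Aj {\Aj}^* - \cov} \leq \nu J^{-\gamma}$ for all $J$. Substituting and setting $\kappa := c\nu$ finishes this case.

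For part \textit{(ii)}, the key observation is that the estimate in \autoref{ekiEstimate} in fact holds pointwise for every $\omega \in \Omega$: inspecting its proof, the constant can be taken to be $c = c_{RL}(1 + c_{RL}^2 \xopnorm{\cov})$, which depends only on $L$, $\noicov$ and $\cov$, not on $\omega$ or $J$. Hence, raising both sides to the power $p$, using monotonicity of $t \mapsto t^p$ and taking expectations — the maps $\omega \mapsto \xnorm{\Xha(\omega) - \xtika}$ and $\omega \mapsto \xopnorm{\Aj(\omega)\Aj(\omega)^* - \cov}$ being measurable by the discussion of random bounded linear operators in \autoref{sec:appendix} — we obtain $\Lpxnorm{\Xha - \xtika}{p} \leq c \alpha^{-1} \Lpxopnorm{\Aj {\Aj}^* - \cov}{p}$. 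By \autoref{de:lra}\textit{(ii)} the right-hand side is at most $c\nu_p \alpha^{-1} J^{-\gamma}$, so $\kappa_p := c\nu_p$ works.

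The main (and essentially only) obstacle is the bookkeeping just described: confirming that the constant from \autoref{ekiEstimate} carries no hidden dependence on $\omega$, and that all quantities involved are measurable so that the $L^p$-norms are well-defined. Beyond that, no new estimates are required.
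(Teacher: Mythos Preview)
Your proposal is correct and follows exactly the paper's approach, which simply cites \autoref{ekiEstimate} and \autoref{de:lra} with $\kappa = \nu c$ and $\kappa_p = \nu_p c$. One minor slip: the explicit constant from the proof of \autoref{ekiEstimate} also carries the deterministic factor $\ynorm{\noicov^{-1/2}(\obs - L x_0)}$, but this does not affect your argument since it too is independent of $\omega$ and $J$.
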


\begin{proof}
Follows directly from \autoref{ekiEstimate} and \autoref{de:lra} with $\kappa=\nu \cdot c$ and $\kappa_p = \nu_p \cdot c$.
\end{proof}

\begin{remark}
Alternatively to the above derivation, the convergence of deterministic EKI to Tikhonov regularization can be seen as special case of the convergence of the ensemble square-root filter to the Kalman filter, see for example \cite{KwiMan15} or \cite[section 5.4]{NakPot15}. However, the alternative results presented here are better suited to investigate convergence rates of EKI as a regularization method (see \autoref{sec:rates}), since they explicitly describe the dependence of the approximation error on the regularization parameter $\alpha$. We also note that different types of finite-dimensional approximations of Tikhonov approximation have been studied elsewhere, for example in \cite{Gro84, NeuSch90}.
\end{remark}

\subsection{Optimal low-rank approximations for EKI}\label{subsec:sampling}

\autoref{large_ensemble_convergence} shows that direct EKI, and thus also EKI, converges to Tikhonov regularization with rate equal to the order of the employed low-rank approximation. In general, convergent low-rank approximations only exist if the eigenvalues of $\cov$ satisfy a decay condition.

\begin{assumption}[Decreasing eigenvalues of $\cov$] \label{eigen_decay}
Let $\cov$ satisfy \autoref{assumption}, and let $(\lambda_n)$ denote its eigenvalues in decreasing order. We assume that there exists a constant $\eta > 0$ such that
\begin{align*}
\lambda_n = O(n^{-\eta}).
\end{align*}
\end{assumption}

\begin{remark}
In this paper, we always assume that all eigenvalues are repeated according to their multiplicities.
\end{remark}

\begin{example} Consider \autoref{ex:c0}. In this case, \autoref{eigen_decay} is satisfied with $\eta = 2/d$ \cite{Kro92}.
\end{example}

Under \autoref{eigen_decay}, the Schmidt-Eckhardt-Young-Mirsky theorem \cite{Schm07} \cite{EckYou36} \cite{Mir60} states that the best possible order of any low-rank approximation of $\cov$ is $\eta$, and it is achieved by the truncated singular value decomposition.

\begin{theorem}[Schmidt-Eckhardt-Young-Mirsky]\label{schmidt}
Let $\cov \in \BL(\XX;\XX)$ be positive, self-adjoint and compact, and let $(\lambda_n)$ denote its eigenvalues in decreasing order. Let $\Ant \Ant^*$ denote the $J$-truncated singular value decomposition of $\cov$. Then
\begin{align*}
\xopnorm{\Ant \Ant^* - \cov} = \lambda_{J+1} = \inf \Set{\xopnorm{\bm{P} - \cov}}{\bm P \in \BL(\XX;\XX),~ \rank (\bm P) \leq J}.
\end{align*}
\end{theorem}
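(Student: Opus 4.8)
The plan is to reduce everything to the spectral theorem for $\cov$ and then treat the two asserted equalities separately: the first (the value of the approximation error at the truncated SVD) by a direct computation, the second (optimality) by a classical dimension-counting argument.

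First I would invoke the spectral theorem: since $\cov$ is compact, positive and self-adjoint there is an orthonormal system $(e_n)$ with $\cov e_n = \lambda_n e_n$ and $\cov x = \sum_n \lambda_n \xinner{x}{e_n} e_n$, the $\lambda_n$ listed in decreasing order (if $\cov$ has finite rank $r \le J$, set $\lambda_{J+1} := 0$; then $\rank \cov \le J$, the left-hand side is $0$, and the infimum is attained by $\bm P = \cov$, so the statement is trivial — hence I may assume $\lambda_{J+1} > 0$). Here $\Ant\Ant^*$ is the operator $\sum_{n=1}^J \lambda_n \xinner{\cdot}{e_n} e_n$, so $\cov - \Ant\Ant^* = \sum_{n>J}\lambda_n\xinner{\cdot}{e_n}e_n$ is again compact, positive and self-adjoint with spectrum $\{\lambda_n : n > J\}\cup\{0\}$; therefore $\xopnorm{\Ant\Ant^* - \cov} = \sup_{n > J}\lambda_n = \lambda_{J+1}$. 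This establishes the first equality and in particular shows that the infimum on the right is $\le \lambda_{J+1}$.

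It then remains to prove $\xopnorm{\bm P - \cov} \ge \lambda_{J+1}$ for every $\bm P \in \BL(\XX;\XX)$ with $\rank(\bm P) \le J$. The key step is to find a unit vector $x$ in the $(J+1)$-dimensional subspace $V := \linspan\{e_1,\dots,e_{J+1}\}$ with $\bm P x = 0$: the restriction to $V$ of the quotient map $\XX \to \XX/\nullspace{\bm P}$ sends a $(J+1)$-dimensional space into one of dimension $\dim(\ran{\bm P}) \le J$, hence has nontrivial kernel, and any nonzero element of that kernel, rescaled to norm one, is such an $x$. Writing $x = \sum_{n=1}^{J+1} c_n e_n$ with $\sum_n |c_n|^2 = 1$, one gets $(\bm P - \cov) x = -\cov x = -\sum_{n=1}^{J+1}\lambda_n c_n e_n$, whence $\xnorm{(\bm P - \cov)x}^2 = \sum_{n=1}^{J+1}\lambda_n^2 |c_n|^2 \ge \lambda_{J+1}^2\sum_n|c_n|^2 = \lambda_{J+1}^2$; taking square roots and maximizing over unit vectors gives $\xopnorm{\bm P - \cov} \ge \lambda_{J+1}$, which combined with the previous paragraph yields the claimed chain of equalities.

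I do not expect any real obstacle; the only places requiring mild care are the identification of the operator norm of $\cov - \Ant\Ant^*$ with $\lambda_{J+1}$ (cleanly handled through the spectral calculus for the compact self-adjoint operator $\cov - \Ant\Ant^*$) and phrasing the dimension argument through the quotient map $\XX \to \XX/\nullspace{\bm P}$ rather than a naive dimension subtraction, so that it remains valid when $\XX$ is infinite-dimensional.
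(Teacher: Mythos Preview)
Your proof is correct and is essentially the classical argument for the Schmidt--Eckart--Young--Mirsky theorem in the operator norm: compute the residual of the truncated spectral expansion directly, then use the dimension/pigeonhole argument on $\linspan\{e_1,\dots,e_{J+1}\}$ to produce a unit vector annihilated by any competitor $\bm P$ of rank at most $J$. The care you take with the quotient map $\XX \to \XX/\nullspace{\bm P}$ is appropriate and makes the argument clean in infinite dimensions.

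There is nothing to compare against, however: the paper does not prove this theorem at all. It simply states the result and cites the original sources (Schmidt 1907, Eckart--Young 1936, Mirsky 1960), treating it as a classical fact. So your proposal goes well beyond what the paper does, supplying a complete self-contained proof where the paper only gives references.
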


\begin{remark}
Note that the optimal possible order for a low-rank approximation does not directly depend on the dimension of the underlying spaces $\XX$ and $\YY$, only on the decay of the eigenvalues of $\cov$. This means that we obtain dimension-independent convergence rates as long as the eigenvalues of $\cov$ decay sufficiently fast.
\end{remark}

Existing formulations of EKI generate a stochastic low-rank approximation of $\cov$ from the ensemble anomaly $\Anom(\ens)$ (see \autoref{de:anomaly}) of a randomly generated ensemble $\ens$. For this type of approximation, we have the following result.

\begin{theorem}[Low rank approximation of $\cov$]\label{sample_covariance}
Assume that $\cov$ is in the trace-class and let $p \in [1,\infty)$ be fixed. Moreover, for every $J \in \N$, let $\ens = [U_1,\ldots,U_J] \in \XX^J$ be an ensemble of independent Gaussian random elements with $\Cov{U_j} = \cov$, for all $j \in \{ 1,\ldots, J \}$, and let $\Anom(\ens)$ be as in \autoref{de:anomaly}. Then
$(\Anom(\ens))_{J=1}^\infty$ generates a stochastic low-rank approximation of $\cov$, 
of $p$-order $1/2$, meaning that there exists a constant $\nu_p$ such that
\begin{align*}
\Lpxopnorm{\Anom(\ens)\Anom(\ens)^* - \cov}{p} \leq \nu_p J^{-1/2} \qquad \text{for all } J \in \N.
\end{align*}
In particular, for $p=1$, there exists a constant $c > 0$ such that
\begin{align}
c J^{-1/2} \leq \Exp{\xopnorm{\Anom(\ens) \Anom(\ens)^* - \cov}} \qquad \text{for all } J \in \N. \label{eq:lower_bound}
\end{align}
\end{theorem}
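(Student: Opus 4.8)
The plan is to reduce the claim about the operator norm $\norm{\Anom(\ens)\Anom(\ens)^* - \cov}_{\BL(\XX;\XX)}$ to a statement about sample covariances of Gaussian random elements, and then to invoke (or reprove) known concentration estimates for empirical covariance operators in Hilbert space. First I would observe that $\Anom(\ens)\Anom(\ens)^*$ is exactly the (centered) empirical covariance operator $\frac{1}{J}\sum_{j=1}^J (U_j - \overline U)\otimes (U_j - \overline U)$ of the sample $U_1,\dots,U_J$, where $\overline U = \frac{1}{\sqrt J}\sum U_j$ in the normalization of \autoref{de:anomaly} corresponds to the usual sample mean up to the $\sqrt J$ factor; a short computation confirms $\Anom(\ens)\Anom(\ens)^* v = \frac1J \sum_j \xinner{U_j - \overline U_{\mathrm{emp}}}{v}(U_j - \overline U_{\mathrm{emp}})$. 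Splitting off the contribution of the mean, it suffices to control $\norm{\frac1J\sum_j U_j\otimes U_j - \cov}$ and the rank-one term $\overline U_{\mathrm{emp}}\otimes \overline U_{\mathrm{emp}}$ separately; the latter has norm $\norm{\overline U_{\mathrm{emp}}}_\XX^2$, whose $L^p$-norm decays like $J^{-1}$ by Gaussian tail bounds since $\Exp{\norm{\overline U_{\mathrm{emp}}}_\XX^2} = \tr(\cov)/J$, and $\cov$ is trace class by hypothesis.

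For the main term $S_J := \frac1J\sum_{j=1}^J U_j\otimes U_j - \cov$, the key step is a bound of the form $\Lpxopnorm{S_J}{p} \leq C_p\, \tr(\cov)^{1/2}\norm{\cov}^{1/2} J^{-1/2}$, or at least $\leq C_p \tr(\cov) J^{-1/2}$ for $J \geq 1$. I would obtain this by noting that each $U_j\otimes U_j$ is a self-adjoint random operator with $\Exp{U_j\otimes U_j} = \cov$ and with well-controlled moments: because $U_j$ is Gaussian, $\Exp{\norm{U_j}_\XX^{2m}} \leq c_m \tr(\cov)^m$ for every $m$, so the Schatten/operator norms of $U_j\otimes U_j$ have all moments finite. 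A Hilbert-space Rosenthal-type or Bernstein-type inequality for sums of i.i.d. centered self-adjoint random operators (or, more elementarily, a symmetrization plus contraction argument bounding $\Exp{\norm{S_J}}$ by $\frac{C}{\sqrt J}\Exp{\norm{U_1\otimes U_1}}^{1/2}\norm{\cov}^{1/2}$-type quantities, then upgrading to $L^p$ via Gaussian hypercontractivity of the chaos of order $2$) yields the $J^{-1/2}$ rate. The constant will depend on $p$ but not on $J$, which is all that is needed; I would absorb everything into $\nu_p$.

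For the lower bound \autoref{eq:lower_bound} with $p=1$, I would argue by testing against the top eigenvector. Let $e_1$ be the normalized eigenvector of $\cov$ for $\lambda_1$. Then $\norm{\Anom(\ens)\Anom(\ens)^* - \cov}_{\BL(\XX;\XX)} \geq \abs{\xinner{(\Anom(\ens)\Anom(\ens)^* - \cov)e_1}{e_1}} = \abs{\frac1J\sum_j \xinner{U_j - \overline U_{\mathrm{emp}}}{e_1}^2 - \lambda_1}$, and the scalars $\xinner{U_j}{e_1}$ are i.i.d.\ $\normal(0,\lambda_1)$, so the right-hand side is (up to the negligible mean correction) the deviation of a normalized chi-squared with $J$ degrees of freedom from its mean $\lambda_1$. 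A standard lower bound on $\Exp{\abs{\chi^2_J/J - 1}}$, e.g.\ via the Paley–Zygmund inequality applied to the variance (which is $2\lambda_1^2/J$), gives $\Exp{\abs{\frac1J\sum_j \xinner{U_j}{e_1}^2 - \lambda_1}} \geq c\,\lambda_1 J^{-1/2}$; after subtracting the $O(J^{-1})$ contribution of $\overline U_{\mathrm{emp}}$ one still gets $\geq c' J^{-1/2}$ for a possibly smaller constant, which is the claim.

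The main obstacle is the upper bound in operator norm: passing from scalar or Schatten-norm concentration to the operator norm of the empirical covariance in a genuinely infinite-dimensional (though trace-class) setting requires some care, since naive union-bound arguments over an $\varepsilon$-net do not apply directly. I expect to handle this either by citing an off-the-shelf result on empirical covariance operators of Gaussian measures on Hilbert space (the rate $J^{-1/2}$ for $\Exp{\norm{\widehat\Sigma_J - \Sigma}}$ is classical under $\tr(\Sigma) < \infty$), or by a self-contained symmetrization argument exploiting that $U\otimes U$ lives in the second Wiener chaos, where all $L^p$-norms are equivalent, so that an $L^2$ (i.e. Hilbert–Schmidt, hence operator-norm-dominating) estimate suffices and then hypercontractivity upgrades it to general $p$. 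The remaining details — the mean-correction terms and the explicit dependence of constants on $p$ — are routine.
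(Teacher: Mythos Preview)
Your proposal is essentially sound. The paper's own proof, however, consists of a single citation (to Koltchinskii--Lounici's concentration and moment bounds for sample covariance operators); no argument is given in the paper beyond that reference. You in fact list this option yourself (``citing an off-the-shelf result on empirical covariance operators of Gaussian measures on Hilbert space''), so your proposal strictly contains the paper's approach.

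Where you go further is in sketching a self-contained argument. That sketch is largely correct: identifying $\Anom(\ens)\Anom(\ens)^*$ with the empirical covariance, splitting off the rank-one mean term (whose $L^p$-norm is $O(J^{-1})$ by the trace-class assumption), and handling the lower bound by projecting onto the top eigenvector and using a fourth-moment/Paley--Zygmund bound for the centered $\chi^2_J/J$ all work exactly as you describe. The one place to be slightly more careful is the hypercontractivity step for the \emph{operator} norm of $S_J$: the operator norm is a supremum of second-chaos functionals, not itself a polynomial, so the equivalence of $L^p$-norms does not apply to $\|S_J\|_{\mathrm{op}}$ directly. Your Hilbert--Schmidt route avoids this cleanly, since $\|S_J\|_{\mathrm{HS}}^2$ \emph{is} a (degree-4) polynomial in the underlying Gaussians, $\Exp{\|S_J\|_{\mathrm{HS}}^2} = J^{-1}\Exp{\|U_1\otimes U_1 - \cov\|_{\mathrm{HS}}^2} \leq C(\tr\cov)^2/J$, and then hypercontractivity on $\|S_J\|_{\mathrm{HS}}$ combined with $\|S_J\|_{\mathrm{op}} \leq \|S_J\|_{\mathrm{HS}}$ gives the $L^p$ bound for all $p$. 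This yields constants depending on $\tr(\cov)$ rather than the sharper effective-rank constants in Koltchinskii--Lounici, but that is irrelevant for the statement as formulated.
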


\begin{proof}
See \cite{KolLou17}.
\end{proof}

\begin{example}\label{ex:covariance_kernel}
We want to give some examples of trace-class operators on $L^2(\mu)$, where $\mu$ is a Radon measure on a domain $U \subset \R^d$ with $\supp \mu = U$. Then, Mercer's theorem (see e.g.  \cite[Theorem 5.6.9]{Dav07}) characterizes a large class of trace-class operators: An operator $P: L^2(\mu) \to L^2(\mu)$ is in the trace-class if it can be represented by an integrable continuous positive-definite kernel, i.e.
\begin{align*}
P f(x) = \int_U k(x,y)f(y) \d \mu(y).
\end{align*}
\end{example}

The following result on trace-class operators allows us to directly compare the order of $(\Anom(\ens))_{J=1}^\infty$ to the theoretical optimum defined in \autoref{schmidt}.

\begin{proposition}\label{operator_stuff}
Let $\cov$ be a positive and self-adjoint trace-class operator with eigenvalues $(\lambda_n)$. Then
\begin{align*}
\lambda_n = O(n^{-1}).
\end{align*}
\end{proposition}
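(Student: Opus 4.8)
The plan is to exploit the fact that a positive self-adjoint trace-class operator has summable eigenvalues, and then to combine this with the monotonicity of the ordering $(\lambda_n)$. Since $\cov$ is positive and self-adjoint and trace-class, its eigenvalues are nonnegative, arranged in decreasing order by hypothesis, and satisfy $\sum_{n=1}^\infty \lambda_n = \tr(\cov) < \infty$ (this is exactly the defining property from the trace-class definition in \autoref{sec:notation_terminology}, specialized to an eigenbasis, which exists by the spectral theorem for compact self-adjoint operators).

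The key step is the elementary observation that for a decreasing nonnegative sequence whose sum converges, one has $n \lambda_n \to 0$, and in particular $n \lambda_n$ is bounded, which is precisely the claim $\lambda_n = O(n^{-1})$. Concretely, I would argue: because $(\lambda_k)$ is decreasing and nonnegative,
\begin{align*}
n \lambda_{2n} \leq \sum_{k=n+1}^{2n} \lambda_k \leq \sum_{k=n+1}^\infty \lambda_k,
\end{align*}
and the right-hand side tends to $0$ as $n \to \infty$ by convergence of the series (it is a tail). Hence $2n \lambda_{2n} \to 0$; a similar bound $n \lambda_{2n+1} \leq \sum_{k=n+1}^{2n+1}\lambda_k$ handles odd indices, so $n\lambda_n \to 0$ along the full sequence. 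Since a convergent-to-zero sequence is bounded, there is a constant $M$ with $\lambda_n \leq M/n$ for all $n$, which is the assertion. (One could even state the slightly stronger $\lambda_n = o(n^{-1})$, but $O(n^{-1})$ is what is needed downstream.)

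There is essentially no serious obstacle here; the only point requiring a word of care is making sure the eigenvalues are genuinely indexed in decreasing order with multiplicities (guaranteed by the standing convention stated in the remark following \autoref{eigen_decay}) so that the telescoping/blocking inequalities above are valid, and that the spectral theorem applies so that the trace is the sum of eigenvalues — both of which follow from $\cov$ being compact (implied here, since trace-class operators are compact) and self-adjoint. So the proof is a short self-contained estimate; the main work is simply to phrase the blocking inequality cleanly.
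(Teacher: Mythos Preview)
Your proof is correct and follows the same route as the paper: summability of the eigenvalues of a positive self-adjoint trace-class operator, combined with the decreasing ordering, forces $\lambda_n = O(n^{-1})$. The paper states this implication without details, whereas you spell out (and slightly strengthen to $o(n^{-1})$) the elementary sequence argument; a one-line variant is simply $n\lambda_n \leq \sum_{k=1}^n \lambda_k \leq \tr(\cov)$.
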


\begin{proof}
It follows from the assumptions on $\cov$ that
\begin{align*}
\sum_{n=1}^\infty \lambda_n < \infty,
\end{align*}
(see e.g. \cite[lemma 5.6.2]{Dav07}) which implies $\lambda_n = O(n^{-1})$.
\end{proof}

Therefore, if $\cov$ is in the trace-class, then according to \autoref{schmidt} the optimal low-rank approximation of $\cov$ is given by $\Ant \Ant^*$ and is at least of order 1. However, since the low-rank approximation generated by $(\Anom(\ens))_{J=1}^\infty$ satisfies the lower bound \autoref{eq:lower_bound}, the ensemble-based low-rank approximation, while cheaper, is not of optimal order.

This leads to the question whether there exist low-rank approximations of $\cov$ that are of optimal order but do not require knowledge of the singular value decomposition of $\cov$. 
The answer to this question is yes. There exist stochastic low-rank approximations that are of optimal order and only require $O(J)$ evaluations of $\cov$ \cite{HalMarTro11}. An example of such a scheme is the \emph{Nyström method} \cite{Nys30} \cite{GitMah16} \cite{Nak20_report}. We will consider a special case given by algorithm 1.

\begin{algorithm}[H]
\caption{\texttt{Nyström method (with projection-based sketches)}}
Given a positive and self-adjoint operator $\cov$ and a target rank $J \in \N$.
\begin{algorithmic}[1]
\State Generate iid random samples $W_1,\ldots,W_J \sim \normal(0, \Idx)$ and assemble them in $\wens = [W_1,\ldots,W_J]$;
\State Compute $\ens = \cov \wens$;
\State Compute the reduced $QR$-decomposition $\ens = \Qj \Rj$, where $\Qj \in \BL(\XX;\R^J)$ is orthonormal;
\State Set $\Ann = \cov \Qj (\Qj^\top \cov \Qj)^{-1/2}$;
\end{algorithmic}
\label{alg:nystroem}
\end{algorithm}

It has been shown that this method leads to a stochastic low-rank approximation of optimal order.

\begin{theorem}[Nyström low rank approximation]\label{nyström}
Let $\Ann$ be obtained from \autoref{alg:nystroem} and let $(\lambda_n)$ denote the decreasing eigenvalues of $\cov$. Then
\begin{equation}\label{eq:nystroem_precise}
\Exp{\xopnorm{\Ann \Ann^* - \cov}} \leq \left(1 + \sqrt{\frac{J}{J-N-1}}\right) \lambda_{N+1} + \frac{e \sqrt{2J - N}}{J-N} \sqrt{ \sum_{n > N} \lambda_n^2},
\end{equation}
for all $N \in \N$ with $N \leq J-2$, where $e$ denotes Euler's number. In particular, if \autoref{eigen_decay} is satisfied with $\eta>1/2$, we have
\begin{equation}\label{eq:nystroem_order}
\Exp {\xopnorm{\Ann \Ann^* - \cov}} = O(J^{-\eta}).
\end{equation}
\end{theorem}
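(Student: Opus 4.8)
The estimate \eqref{eq:nystroem_precise} is a known result from the literature on randomized low-rank approximation, so the plan is to cite it rather than reprove it: it is precisely the bound obtained for the Nyström approximation with a standard Gaussian sketch (see \cite{HalMarTro11}, \cite{GitMah16}, or \cite{Nak20_report}), applied here in the self-adjoint Hilbert-space setting with $\cov$ trace-class and the sketch matrix $\wens$ having i.i.d.\ $\normal(0,\Idx)$ columns. The only points worth checking are that the infinite-dimensional formulation matches the finite-dimensional statements in those references (which follows since $\cov$ is compact, so it is a norm limit of finite-rank operators and the bound passes to the limit), and that the operator $\Qj^\top \cov \Qj$ is invertible almost surely (which holds because the Gaussian sketch columns are almost surely linearly independent and hence $\ens = \cov\wens$ has full column rank $J$ almost surely, using injectivity of $\cov$). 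So the substance of the proof is deriving the order statement \eqref{eq:nystroem_order} from \eqref{eq:nystroem_precise}.

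For that, I would simply choose $N$ as a suitable fraction of $J$ and use \autoref{eigen_decay}. Concretely, fix $N = \lceil J/2 \rceil$, say, so that $J - N - 1 \geq J/2 - 2$, which is positive for $J$ large; then the prefactor $1 + J/(J-N-1)$ is bounded by an absolute constant, and likewise $\sqrt{J}/(J-N-1) = O(J^{-1/2})$. By \autoref{eigen_decay} we have $\lambda_{N+1} = O(N^{-\eta}) = O(J^{-\eta})$, which handles the first term. For the tail term, $\sum_{n>N}\lambda_n^2 = O\!\left(\sum_{n>N} n^{-2\eta}\right)$, and since $\eta > 1/2$ we have $2\eta > 1$, so this sum converges and is $O(N^{-(2\eta-1)}) = O(J^{-(2\eta-1)})$; taking square roots gives $O(J^{-(\eta - 1/2)})$, and multiplying by the $O(J^{-1/2})$ prefactor yields $O(J^{-\eta})$ for the second term as well. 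Combining the two contributions gives $\Exp{\xopnorm{\Ann\Ann^* - \cov}} = O(J^{-\eta})$, as claimed.

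The only mildly delicate point is the borderline behaviour of the tail sum: if one instead uses the cruder bound $\sqrt{\sum_{n>N}\lambda_n^2} \le \sqrt{\sum_{n>N}\lambda_n}$ together with mere trace-class-ness, one only gets a rate that is not quite $J^{-\eta}$, so it is important to use the quantitative decay $\lambda_n = O(n^{-\eta})$ with $\eta>1/2$ and estimate $\sum_{n>N} n^{-2\eta}$ by comparison with $\int_N^\infty t^{-2\eta}\,dt$. Everything else is a routine manipulation of the two explicit terms in \eqref{eq:nystroem_precise}. I do not anticipate any real obstacle here; the main thing to get right is the bookkeeping of which power of $J$ each factor contributes and confirming that the choice $N \asymp J$ keeps $J - N - 1$ of order $J$ while still letting $\lambda_{N+1}$ inherit the full $J^{-\eta}$ decay.
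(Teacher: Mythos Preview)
Your proposal is correct and follows essentially the same route as the paper: cite the randomized-linear-algebra literature for the precise bound \eqref{eq:nystroem_precise}, then choose $N \asymp J/2$ and estimate the two terms exactly as you describe to obtain the $O(J^{-\eta})$ rate. The only minor difference is that the paper breaks the citation into two pieces---first reducing the Nystr\"om error $\xopnorm{\Ann\Ann^*-\cov}$ to the range-projection error $\xopnorm{\Qj\Qj^*\cov - \cov}$ via \cite{DriMah05}, and then invoking \cite[Theorem~10.6]{HalMarTro11} for the expectation bound on the latter---whereas you cite the Nystr\"om bound as a single result; both are legitimate.
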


\begin{proof}
It follows from lemma 4 in \cite{DriMah05} that
\begin{align*}
\xopnorm{\Ann \Ann^* - \cov} \leq \xopnorm{\Qj \Qj^* \cov - \cov},
\end{align*}
where $\Qj$ is as in algorithm 1. The right-hand side can be estimated using \cite[theorem 10.6]{HalMarTro11} (the adaptation to our infinite-dimensional setting is straightforward), yielding \autoref{eq:nystroem_precise}. If we then choose $N = J/2$ in \autoref{eq:nystroem_precise} (assuming without loss of generality that $J$ is even), the right-hand side becomes
\begin{align*}
\left(1 + \frac{J}{J/2-1}\right) \lambda_{J/2+1} + \frac{e \sqrt{3J/2}}{J/2} \sqrt{ \sum_{n > J/2} \lambda_n^2} \leq O(J^{-\eta}) + \frac{e \sqrt{3J/2}}{J/2} O(J^{-\eta + 1/2}) = O(J^{-\eta}).
\end{align*}
\end{proof}

\begin{remark}
By adapting the proof of \cite[theorem 10.6]{HalMarTro11}, one could also show that the Nyström-method is of $p$-order $\eta$, for all $p \in [1,\infty)$.
\end{remark}

We will see in \autoref{sec:numerics} that the accuracy of the Nyström method is very close to the theoretical optimum given by the truncated singular value decomposition.

\subsection{Convergence of direct EKI}\label{subsec:tikhonov}

The ensemble anomaly, truncated singular value decomposition, Nyström method, or in fact any other method for the low-rank approximation of positive operators can be used inside EKI. The corresponding error estimates with respect to Tikhonov regularization follow then directly from \autoref{large_ensemble_convergence}.

\begin{corollary}\label{eki_convergence}
Suppose that \autoref{assumption} is satisfied. Then:
\begin{enumerate}
\item Let $\Aj = \Anom(\ens)$. If $\cov$ is in the trace-class, then for all $p \in [1,\infty)$ there exists a constant $\kappa_p^\mathrm{en}$ such that
\begin{equation}\label{eq:standard_eki_convergence}
\Lpxnorm{\Xha - \xtika}{p} \leq \kappa_p^\mathrm{en} \phi(\alpha) J^{-1/2} \qquad \text{for all } \alpha > 0.
\end{equation}
\item Let $\Aj = \Ant$. Then $\Xha$ is deterministic, and if \autoref{eigen_decay} holds, then there exists a constant $\kappa^\mathrm{svd}$ such that
\begin{equation}\label{eq:svd_eki_convergence}
\xnorm{\Xha - \xtika} \leq \kappa^\mathrm{svd} \phi(\alpha) J^{-\eta} \qquad \text{for all } \alpha > 0.
\end{equation}
\item Let $\Aj = \Ann$. If \autoref{eigen_decay} holds with $\eta > 1/2$, then there exists a constant $\kappa^\mathrm{nys}$ such that
\begin{equation}\label{eq:nys_eki_convergence}
\Exp{ \xnorm{\Xha - \xtika} } \leq \kappa^\mathrm{nys} \phi(\alpha) J^{-\eta} \qquad \text{for all } \alpha > 0.
\end{equation}
\end{enumerate}
\end{corollary}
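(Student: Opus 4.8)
The plan is to derive each of the three claims by simply combining \autoref{ekiEstimate} (or equivalently \autoref{large_ensemble_convergence}) with the relevant low-rank approximation result established in \autoref{subsec:sampling}. The key observation is that \autoref{ekiEstimate} provides the bound $\xnorm{\Xha - \xtika} \leq c \alpha^{-1} \xopnorm{\Aj {\Aj}^* - \cov}$ with a constant $c$ depending only on the data of \autoref{assumption} (in particular on $c_{RL}$ and $\xopnorm{\cov}$), and \emph{not} on the choice of low-rank approximation $(\Aj)$, on $J$, or on $\alpha$. Hence in each case it only remains to insert the appropriate decay estimate for $\xopnorm{\Aj {\Aj}^* - \cov}$ and, where the approximation is stochastic, to take $L^p$-norms or expectations.

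For part \emph{(i)}, I would take $\Aj = \Anom(\ens)$ with $\ens$ an ensemble of independent Gaussian random elements with covariance $\cov$; since $\cov$ is trace-class, \autoref{sample_covariance} yields $\Lpxopnorm{\Anom(\ens)\Anom(\ens)^* - \cov}{p} \leq \nu_p J^{-1/2}$ for every $p \in [1,\infty)$. Applying the $L^p$-norm to the pointwise estimate of \autoref{ekiEstimate} (or citing part \emph{(ii)} of \autoref{large_ensemble_convergence} directly) gives \autoref{eq:standard_eki_convergence} with $\kappa_p^\mathrm{en} = c\,\nu_p$. For part \emph{(ii)}, with $\Aj = \Ant$ the truncated singular value decomposition, the approximation is deterministic, so $\Xha$ is deterministic, and under \autoref{eigen_decay} the Schmidt--Eckhardt--Young--Mirsky theorem (\autoref{schmidt}) gives $\xopnorm{\Ant \Ant^* - \cov} = \lambda_{J+1} = O(J^{-\eta})$; inserting this into \autoref{ekiEstimate} yields \autoref{eq:svd_eki_convergence}. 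For part \emph{(iii)}, with $\Aj = \Ann$ the Nyström approximation of \autoref{alg:nystroem}, \autoref{nyström} gives $\Exp{\xopnorm{\Ann \Ann^* - \cov}} = O(J^{-\eta})$ whenever $\eta > 1/2$; taking expectations in the pointwise bound of \autoref{ekiEstimate} produces \autoref{eq:nys_eki_convergence}.

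There is essentially no technical obstacle here: the corollary is a bookkeeping exercise that assembles the earlier building blocks. The only points requiring a line of care are that the constant $c$ from \autoref{ekiEstimate} is genuinely uniform in $J$ and in the approximation scheme (so that it may be absorbed into $\kappa_p^\mathrm{en}$, $\kappa^\mathrm{svd}$, $\kappa^\mathrm{nys}$), and — in the stochastic cases — that one is entitled to move the $L^p$-norm or expectation through the estimate, which is immediate since the bound in \autoref{ekiEstimate} holds pointwise in $\omega$. I would therefore keep the proof to a few lines, citing \autoref{large_ensemble_convergence} together with \autoref{sample_covariance}, \autoref{schmidt}, and \autoref{nyström} respectively for the three parts.
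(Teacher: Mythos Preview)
Your proposal is correct and matches the paper's own proof essentially line for line: the paper also derives each of the three parts by invoking \autoref{large_ensemble_convergence} together with \autoref{sample_covariance}, \autoref{schmidt}, and \autoref{nyström}, respectively. The additional remarks you make about uniformity of the constant $c$ and the legitimacy of passing to $L^p$-norms or expectations are accurate and, as you note, require no further argument.
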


\begin{proof}
Let $p \in [1,\infty)$. By \autoref{sample_covariance}, $(\Anom(\ens))_{J=1}^\infty$ generates a stochastic low-rank approximation of $p$-order $1/2$. Thus, \autoref{eq:standard_eki_convergence} follows from \autoref{large_ensemble_convergence}. The estimates \autoref{eq:svd_eki_convergence} and \autoref{eq:nys_eki_convergence} then follow analogously through \autoref{schmidt} and \autoref{nyström}, respectively.
\end{proof}

\section{Adaptive ensemble Kalman inversion}
\label{sec:rates}

We have seen in \autoref{large_ensemble_convergence} that direct ensemble Kalman inversion can be understood as a low-rank approximation of Tikhonov regularization. It is well-known that, under a standard source-condition (see \autoref{source_condition} below), the Tikhonov-regularized solution of a linear equation converges to the infinite-dimensional minimum-norm solution (see \autoref{de:minimum_norm_solution}) in the zero-noise limit with a certain rate (see e. g. \cite{EngHanNeu96}). Thus, if we ensure that the error between direct EKI and Tikhonov regularization vanishes with the same rate as Tikhonov regularization converges, then direct EKI will also converge with that rate. However, since its iterates are restricted to the finite-dimensional range of $\Aj$, direct EKI can only lead to a convergent regularization method if the sample size $J$ is adapted to the noise level.

In this section, we describe how this can be achieved in conjunction with the discrepancy prinicple. The resulting method, which we call \emph{adaptive ensemble Kalman inversion}, is a convergent regularization method of optimal order in a sense that will be given below.
For this result, we require knowledge of a number $\delta > 0$ such that
\begin{align}
\noicovnorm{\obs - y} \leq \delta. \label{eq:noise_level}
\end{align}
This assumption is often referred to as a \emph{deterministic} noise model, and the number $\delta$ is called the \emph{deterministic noise level}. For some results on regularization with random noise, see for example \cite{BisHohMunRuy07}.

We start with a precise description of the adaptive EKI method in \autoref{sec:adaptive_description}, followed by a convergence analysis of the zero-noise limit in \autoref{sec:adaptive_analysis}. General remarks explaining the connection to other forms of EKI and multiscale methods are given in \autoref{sec:general_remarks}.

\subsection{Description of the method}\label{sec:adaptive_description}

We start by presenting a version of direct EKI with \emph{a-posteriori} parameter choice rule in the form of the discrepancy principle. We will refer to this method as \emph{adaptive EKI}.

In our definition, we distinguish between the cases where the underlying low-rank approximation is deterministic and stochastic. In the stochastic case, we will use a projection onto a suitably large ball around the initial guess $x_0$. This projection serves to guarantee stability of the resulting iteration even in the presence of non-deterministic sampling error. In \autoref{sec:adaptive_analysis}, we will see that if the radius of the ball is chosen sufficiently large, it does not negatively affect the convergence behavior.

\begin{definition}[Adaptive EKI]\label{de:adaptive_eki}
Let $\gamma > 0$, $b \in (0,1)$, $\alpha_0 > 0$ and $J_0 \in \N$, and define
\begin{align}
\alpha_k & = b^k \alpha_0, \label{eq:alpha_k}\\
\text{and} \quad J_k & = \lceil b^{-\frac{2k}{\gamma}} J_0 \rceil. \label{eq:j_k}
\end{align}
\begin{itemize}
\item If $(\Aj)_{J=1}^\infty$ generates a deterministic low-rank approximation of $\cov$, of order $\gamma$, we define the \emph{adaptive EKI} iteration associated to $(\Aj)_{J=1}^\infty$ as
\begin{align}
\Xaddet_k := \Xhvar{J_k}_{\alpha_k}, \qquad \text{for } k \in \N, \label{eq:adaptive_deterministic_EKI}
\end{align}
where $\Xhvar{J_k}_{\alpha_k}$ is defined in \autoref{de:direct}. If $\Aj = \Ant$ (see \autoref{schmidt}), we refer to the method as \emph{adaptive SVD-EKI} and denote its iterates with $\Xasvd_k$.
\item Let $r > 0$ and let $\xoball$ denote the closed ball around $x_0$ with radius $r$. Let $\projball$ denote the orthogonal projection on $\xoball$. If $(\Aj)_{J=1}^\infty$ generates a stochastic low-rank approximation of $\cov$, of $p$-order $\gamma$, we define the \emph{adaptive EKI} iteration associated to $(\Aj)_{J=1}^\infty$ as
\begin{align}
\Xad_k(\omega) := \projball \left(\Xhvar{J_k}_{\alpha_k}(\omega) \right), \qquad \text{for } k \in \N \text{ and } \omega \in \Omega, \label{eq:adaptive_EKI}
\end{align}
where $\Xhvar{J_k}_{\alpha_k}$ is defined in \autoref{de:direct}.
If $\Aj = \Anom(\ens)$ (see \autoref{de:anomaly}), we refer to this method as \emph{adaptive Standard-EKI} and denote its iterates with $\Xaeki_k$. Similarly, if $\Aj = \Ann$ (see \autoref{nyström}), we refer to the method as \emph{adaptive Nyström-EKI} and denote its iterates with $\Xanys_k$.
\end{itemize}
\end{definition}

The exponential reduction of the regularization parameter, given by \autoref{eq:alpha_k}, is a typical choice for regularization methods of similar form, and can already be found in \cite{Bak92}. The choice of $(J_k)_{k=1}^\infty$ is motivated by \autoref{large_ensemble_convergence}: By ensuring that $J_k^\gamma$ grows at least as fast as $\alpha_k^{-1}$, we make sure that the approximation error between adaptive EKI and Tikhonov regularization does not explode as $k$ increases.

In order to ensure convergence, we choose a stopping criterion for the adaptive EKI iteration. We consider the discrepancy principle, which has the advantage that it is easy to implement and it requires only little prior information on the forward operator $L$. In the case where the employed low-rank approximation is stochastic, the resulting stopping index is a random variable.

\begin{definition}[Discrepancy principle]\label{de:discrepancy}
Let $\delta$ be as in \autoref{eq:noise_level} and $\tau > 1$. Then, adaptive EKI (\autoref{de:adaptive_eki}) is terminated after $K_\delta$ iterations, where the integer random variable $K_\delta: \Omega \to \N \cup \{\infty \}$ satisfies
\begin{align}
\noicovnorm{\obs - L \Xad_{K_\delta(\omega)}(\omega) }
\leq \tau \delta < \noicovnorm{\obs - L \Xad_k(\omega) } \qquad \text{for all } k < K_\delta(\omega), \label{eq:ekiDiscrepancy}
\end{align}
where we set $K_\delta(\omega) = \infty$ if such a number does not exist.
\end{definition}

For the case of Tikhonov regularization, it is known that the discrepancy principle yields a converging regularization method under standard assumptions. The main difficulty of the analysis of adaptive EKI is to show that this result also holds for the random, approximate iteration given by \autoref{de:adaptive_eki}.

Pseudo-code for the adaptive EKI method in conjunction with the discrepancy principle is given in \autoref{alg:adaptive_eki}.

\begin{algorithm}[H]
\caption{\texttt{Adaptive EKI}}
Given $\obs$, $L$, $\delta$, $\alpha_0 > 0$, $J_0 \in \N$, $\tau > 1$, $b \in (0,1)$, $r > 0$, and deterministic or stochastic low-rank approximation $(\Aj)$ of $\cov$, of order $\gamma$.
\begin{algorithmic}[2]
\For{$k=1,\ldots$}
\State set $\alpha_k = b^k \alpha_0$;
\State set $J_k = \lceil b^{-\frac{k}{\gamma}} J_0 \rceil$;
\State set $\bm A_k = \bm A^{\scriptscriptstyle (J_k)}$;
\State compute $\bm B_k = \noicov^{-1/2} L \bm A_k$ by applying $\noicov^{-1/2} L$ to all columns of $\bm A_k$;
\State set $ \Xad_k = \xo + \bm A_k \left(\bm B_k^* \bm B_k  + \alpha_k \Ij \right)^{-1} \bm B_k^* R^{-1/2} (\obs - L \xo)$;
\If{$(\Aj)$ is stochastic and $\xnorm{\Xad_k - x_0} > r$}
\State set $\Xad_k = \projball(\Xad_k)$;
\EndIf
\If{$\norm{\obs - L \Xad_k}_\noicov \leq \tau \delta$}
\State break;
\EndIf
\EndFor
\State return $\Xad_k$;
\end{algorithmic}
\label{alg:adaptive_eki}
\end{algorithm}

\subsection{Convergence analysis}\label{sec:adaptive_analysis}

Next, we show that adaptive EKI as defined above is a convergent regularization method, where convergence is considered relative to the minimum-norm solution of \autoref{eq:inverseProblem}, defined as follows.

\begin{definition}\label{de:minimum_norm_solution}
We call $x^\dagger \in \XX$ an $(x_0, \cov)$-minimum-norm solution of $L x = y$ if
\begin{align*}
x^\dagger \in \argmin_{x \in \XX}\Set{\covnorm{x - x_0}}{Lx = y}.
\end{align*}
\end{definition}
The existence and uniqueness of $x^\dagger$ follow from \cite[theorem 2.5]{EngHanNeu96} taking into account \autoref{c_space}.

Before we continue, it is convenient to summarize the different inversion techniques and the according terminology.
\begin{center}
\begin{tabular}{||r||c|l||}
\hline
& Random variable & \\
\hline
$\Xk$ & $k$-th iterate of EKI with sample size $J$ & \autoref{eq:entik1}\\
$\Xha$ & Direct EKI with regularization parameter $\alpha$ & \autoref{eq:eki_alpha1}\\
$\Xaddet_k$ & \vtop{\hbox{\strut The $k$-th iterate of adaptive EKI with a deterministic}\hbox{\strut low-rank approximation}} & \autoref{eq:adaptive_deterministic_EKI}\\
$\Xad_k$ & \vtop{\hbox{\strut The $k$-th iterate of adaptive EKI with a stochastic}\hbox{\strut low-rank approximation}} & \autoref{eq:adaptive_EKI}\\
\hline
$\xtik_\alpha$ & Tikhonov-regularized solution according to the noisy data $\obs$& \autoref{eq:tikhonov_formula}\\
$x_\alpha$ & Tikhonov-regularized solution according to the exact data $y$& \autoref{eq:unperturbed_tikhonov} \\
\hline
\hline
\end{tabular}
\end{center}
 
Our convergence proof is based on the assumption that $x^\dagger$ satisfies a source condition, which is defined as follows.

\begin{assumption}[Source condition]\label{source_condition}
Let $\XX_{\cov}$ be defined as in \autoref{c_space}. There exists a $(\xo, \cov)$-minimum-norm solution $\xmns \in \XX_{\cov}$ of $Lx = y$, constants $\mu \in (0,1/2]$, $\rho > 0$ , and some $v \in \XX$ with
$\xnorm{v} \leq \rho$ such that
\begin{align} \label{eq:standardSourceCondition}
\xmns - \xo = C_0^{1/2} (B^* B)^\mu v,
\end{align}
where $B = R^{-1/2} L C_0^{1/2}$.
\end{assumption}
\begin{remark}
\autoref{eq:standardSourceCondition} can be interpreted as a smoothness assumption on the minimum-norm solution $\xmns$. Source conditions are ubiquitous in the mathematical literature on inverse problems. Typically, convergence rates for regularization methods cannot be proven without assuming some type of source condition. Beyond the condition \autoref{eq:standardSourceCondition}, also logarithmic, variational, and spectral tail conditions can be considered. See \cite{Gro83,Neu97,Hoh00,Sch01a} and some more recent references \cite{AndElbHooQiuSch15,AlbElbHooSch16}.
\end{remark}

For the subsequent convergence analysis, we focus first on the more challenging case where adaptive EKI is based on a \emph{stochastic} low-rank approximation. In that case, the following additional assumptions are sufficient to obtain convergence rates.

\begin{assumption}\label{parameter_assumptions}
Let $p, q \in [1, \infty)$, $\epsilon \in (0, \tau -1)$, and let $(\Aj)_{J=1}^\infty$ generate a stochastic low-rank approximation of $\cov$, of $p$-order $\gamma$.
\begin{enumerate}
\item The projection radius $r$ from \autoref{de:adaptive_eki} satisfies
\begin{align}
r \geq 2 \xopnorm{\cov^{1/2}} \covnorm{x_0 - x^\dagger}. \label{eq:r_large_enough}
\end{align}
\item There holds
\begin{align}
\phi(\alpha_0)^{-1} J_0^\gamma \geq \frac{c_{RL} \kappa_p}{\epsilon \delta^{1+\frac{q}{p}}}, \label{eq:initial_size_stochastic}
\end{align}
where $c_{RL}$ is as in \autoref{eq:rl_condition} and $\kappa_p$ is as in \autoref{large_ensemble_convergence}.
\end{enumerate}
\end{assumption}

\begin{remark}\label{re:adapted_size}
Note that \autoref{eq:initial_size_stochastic} together with \autoref{eq:alpha_k} and \autoref{eq:j_k} implies that a corresponding estimate holds for all subsequent iterates, i.e.
\begin{align}
\phi(\alpha_k)^{-1} J_k^\gamma \geq b^{2k} \phi(\alpha_0)^{-1} (b_k^{-\frac{2 k}{\gamma}})^\gamma J_0^\gamma \geq \frac{c_{RL} \kappa_p}{\epsilon \delta^{1+\frac{q}{p}}} \qquad \text{for all } k \in \N. \label{eq:adapted_size_stochastic}
\end{align}
Furthermore, the condition given by \autoref{eq:r_large_enough} simply means that the projection radius $r$ has to be chosen large enough in relation to the initial error $\covnorm{x_0 - x^\dagger}$. We show in \autoref{pr:adaptive_eki_error} that this condition ensures that the projection in \autoref{eq:adaptive_EKI} does not increase the approximation error between adaptive EKI and Tikhonov regularization.
\end{remark}

Our strategy to obtain convergence rates for adaptive EKI is to use the error estimate between direct EKI and Tikhonov regularization, provided by \autoref{large_ensemble_convergence}, to transfer the well-established convergence results on Tikhonov regularization to adaptive EKI. The main complication is that the discrepancy principle introduces a coupling between the regularization parameter and the sampling error, which makes it challenging to estimate $\xnorm{\Xad_{K_\delta} - \xtik_{\alpha_{K_\delta}}}$ directly. Instead, we employs a good-set strategy, similar to the one used in \cite{BauHohMun09} for the analysis of the iteratively regularized Gauss-Newton method for random noise. The idea behind the good-set-strategy is to define a suitable subset of $\Omega$ on which we can perform a deterministic analyis, and then to show that the probability of the complement vanishes sufficiently fast. For our purpose, we define the good set $\goodset \subset \Omega$ by
\begin{align}
& \goodset := \bigcap_{k=1}^{k_\delta} \goodsetk, \label{eq:goodset}\\
\text{where} \quad & \goodsetk := \Set{\omega \in \Omega}{\xnorm{\Xad_k(\omega) - \xtik_{\alpha_k}} \leq c_{RL}^{-1} \epsilon \delta} \label{eq:goodsetk} \\
\text{and} \quad & k_\delta := \min \Set{k \in \N}{ \noicovnorm{\obs - L \xtik_{\alpha_k}} \leq (\tau - \epsilon) \delta}.
\end{align}
Then, the law of total expectation yields, for $q \in [1, \infty)$,
\begin{align*}
\Exp{\xnorm{\Xad_{K_\delta} - x^\dagger}^q} &= \Exp{\xnorm{\Xad_{K_\delta} - x^\dagger}^q | \goodset} \PP(\goodset) + \Exp{\xnorm{\Xad_{K_\delta} - x^\dagger}^q | \goodset^\complement} \PP(\goodset^\complement) \\
& \leq \Exp{\xnorm{\Xad_{K_\delta} - x^\dagger}^q | \goodset} + \Exp{\xnorm{\Xad_{K_\delta} - x^\dagger}^q | \goodset^\complement} \PP(\goodset^\complement),
\end{align*}
where $\goodset^\complement$ denotes the complement of $\goodset$. Since $\Xad_{K_\delta} \in \xoball$ holds by \autoref{eq:adaptive_EKI}, we have
\begin{align}
\Exp{\xnorm{\Xad_{K_\delta} - x^\dagger}^q} \leq \Exp{\xnorm{\Xad_{K_\delta} - x^\dagger}^q | \goodset} + (r + \xnorm{x^\dagger})^q \PP(\goodset^\complement). \label{eq:splitting_strategy}
\end{align}
Hence, it suffices to estimate $\Exp{\xnorm{\Xad_{K_\delta} - x^\dagger}^q | \goodset}$ and $\PP(\goodset^\complement)$ separately.

Estimates for the first term hinge on understanding the behavior of the Tikhonov-regularized solution $\xtik_{\alpha_{k_\delta}}$. The following lemma summarizes existing results on Tikhonov regularization that we will make use of in our theoretical analysis of adaptive EKI. To this end, we consider as an auxiliary variable the Tikhonov-regularized solution of \autoref{eq:inverseProblem} according to the \emph{exact} data $y$ and regularization parameter $\alpha$, defined as
\begin{align}
x_\alpha := \xo + \cov^{1/2} \left(\cov^{1/2} L^* R^{-1} L \cov^{1/2} + \alpha \Idx \right)^{-1} \cov^{1/2} L^* R^{-1} (y - L \xo). \label{eq:unperturbed_tikhonov}
\end{align}
(Compare \autoref{de:tikhonov}.)

\begin{lemma}[Convergence and stability of Tikhonov regularization]\label{det_alpha}
Suppose that \\
\autoref{assumption} and \autoref{source_condition} hold, and let $x_0 \in \XX$, $\alpha > 0$ and $\delta > 0$. Moreover, assume that
\begin{align}
\noicovnorm{\obs - y} \leq \delta. \label{eq:delta}
\end{align}
Then there holds
\begin{align}
\covnorm{\xa - \xmns} & \leq \rho^\frac{1}{2\mu + 1} \noicovnorm{L x_\alpha - y}^\frac{2\mu}{2 \mu + 1}, \label{eq:det_alpha1}\\
\text{and} \quad \noicovnorm{L(\xa - \xtika) - (y - \obs)} & \leq \delta. \label{eq:det_alpha2}
\end{align}
Furthermore, there exist constants $c_1$ and $c_2$, independent of $\alpha$, $\delta$ and $\rho$, such that
\begin{align}
\covnorm{\xa - \xtika} & \leq c_1 \delta \alpha^{-1/2}, \label{eq:det_alpha3}\\
\text{and} \quad \noicovnorm{y - L \xa} & \leq c_2 \rho \alpha^{\mu + 1/2} \label{eq:det_alpha4}.
\end{align}
\end{lemma}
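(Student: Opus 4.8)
The plan is to derive all four estimates from the standard spectral-theory machinery for Tikhonov regularization, using the source condition \eqref{eq:standardSourceCondition} together with the noise bound \eqref{eq:delta}. Throughout I would work with the self-adjoint operator $T := B^*B = C_0^{1/2} L^* R^{-1} L C_0^{1/2}$ on $\XX$ and the filter functions $g_\alpha(\lambda) = (\lambda+\alpha)^{-1}$ and $r_\alpha(\lambda) = \alpha(\lambda+\alpha)^{-1} = 1 - \lambda g_\alpha(\lambda)$, so that in the $\covnorm{\cdot}$-norm the difference $\xa - \xmns$ equals $r_\alpha(T)(\xmns - \xo) = r_\alpha(T) T^\mu v$ by the source condition, and $\xa - \xtika = g_\alpha(T) C_0^{1/2} L^* R^{-1}(y - \obs)$. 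The elementary bounds $\sup_\lambda |r_\alpha(\lambda)| \le 1$, $\sup_\lambda |r_\alpha(\lambda)\lambda^\mu| \le \alpha^\mu$ (for $\mu \in (0,1/2]$), $\sup_\lambda |g_\alpha(\lambda)| \le \alpha^{-1}$, and $\sup_\lambda |g_\alpha(\lambda)\lambda^{1/2}| \le \tfrac12\alpha^{-1/2}$ are the only analytic facts needed.

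First I would prove \eqref{eq:det_alpha2}: writing $L(\xa - \xtika) = R^{1/2} B (\xa - \xtika) = R^{1/2} B g_\alpha(T) C_0^{1/2} L^* R^{-1}(y-\obs)$ and using $B g_\alpha(B^*B) B^* = B B^* g_\alpha(BB^*) = (BB^*+\alpha)^{-1} BB^* = \Id - \alpha(BB^*+\alpha)^{-1}$ (the usual intertwining identity already invoked in the proof of \autoref{ekiEstimate}), one gets $L(\xa-\xtika) - (y-\obs) = -\alpha(BB^*+\alpha)^{-1} R^{1/2} R^{-1/2}(y - \obs)$ up to rewriting $C_0^{1/2}L^*R^{-1} = B^* R^{-1/2}$; taking the $\noicovnorm{\cdot}$-norm and bounding the filter factor $\alpha(BB^*+\alpha)^{-1}$ by $1$ in operator norm yields the estimate $\le \noicovnorm{\obs - y} \le \delta$. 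Next, \eqref{eq:det_alpha4}: here $y - L\xa = y - L\xo - L g_\alpha(T) C_0^{1/2} L^* R^{-1}(y - L\xo)$; since $\xmns$ is the minimum-norm solution we may substitute $y - L\xo = L(\xmns - \xo) = R^{1/2} B (\xmns-\xo) = R^{1/2} B T^\mu v$, and then $y - L\xa = R^{1/2}[\Id - BB^* g_\alpha(BB^*)] B T^\mu v = R^{1/2} r_\alpha(BB^*) B T^\mu v = R^{1/2} B r_\alpha(T) T^\mu v$ by intertwining. Taking norms, $\noicovnorm{y - L\xa} = \xnorm{B r_\alpha(T) T^\mu v} \le \|B T^{1/2,\text{-ish}}\|\cdots$ — more cleanly, bound $\norm{B r_\alpha(T) T^\mu v} = \norm{r_\alpha(BB^*) (BB^*)^{\mu+1/2} \tilde v}$-style by $\sup_\lambda |r_\alpha(\lambda)\lambda^{\mu+1/2}| \cdot \rho \le \alpha^{\mu+1/2}\rho$, giving \eqref{eq:det_alpha4} with $c_2 = 1$ (the constant is harmless). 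For \eqref{eq:det_alpha1}, I would interpolate: $\covnorm{\xa - \xmns} = \xnorm{r_\alpha(T) T^\mu v}$, and using $\xnorm{r_\alpha(T) T^\mu v} \le \xnorm{r_\alpha(T) T^{1/2} w}^{2\mu/(2\mu+1)} \rho^{1/(2\mu+1)}$ — the classical moment/interpolation inequality for the nonnegative operator $T$ with exponents $\mu = \tfrac{1}{2}\cdot\tfrac{2\mu}{2\mu+1} + 0\cdot\tfrac{1}{2\mu+1}$ is not quite right; instead the standard argument bounds $\covnorm{\xa-\xmns}$ in terms of $\noicovnorm{Lx_\alpha - y}$ directly via $\xnorm{r_\alpha(T)T^\mu v} \le \big(\xnorm{r_\alpha(T)^{1/2} T^{1/2} \cdots}\big)$, i.e. one writes $\covnorm{\xa - \xmns}^{2\mu+1} \le \xnorm{r_\alpha(T)T^{1/2}v}^{2\mu}\,\rho$ and recognizes $\xnorm{r_\alpha(T)T^{1/2}v} = \noicovnorm{r_\alpha(BB^*) R^{-1/2}\cdot(y-L x_0)}$-type $= \noicovnorm{L x_\alpha - y}$ by the computation just done for \eqref{eq:det_alpha4}. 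Finally \eqref{eq:det_alpha3}: combine \eqref{eq:det_alpha1} (to control $\covnorm{\xa - \xmns}$) with the triangle inequality $\covnorm{\xa - \xtika} \le \covnorm{\xa - \xmns} + \covnorm{\xmns - \xtika}$ is the wrong split; rather, directly $\covnorm{\xa - \xtika} = \xnorm{\cov^{-1/2}(\xa-\xtika)}$ and from the explicit formulas $\xa - \xtika = \cov^{1/2}(T+\alpha)^{-1} C_0^{1/2} L^* R^{-1}(y - \obs) = \cov^{1/2} g_\alpha(T) B^* R^{-1/2}(y-\obs)$, so $\covnorm{\xa-\xtika} = \xnorm{g_\alpha(T) B^* R^{-1/2}(y-\obs)} = \xnorm{g_\alpha(BB^*)^{1/2}\cdots}$; bounding $\|g_\alpha(BB^*)B^*\| = \sup_\lambda |g_\alpha(\lambda)\lambda^{1/2}| \le \tfrac12\alpha^{-1/2}$ gives $\covnorm{\xa - \xtika} \le \tfrac12 \alpha^{-1/2}\noicovnorm{y - \obs} \le \tfrac12\delta\alpha^{-1/2}$, i.e. \eqref{eq:det_alpha3} with $c_1 = \tfrac12$.

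The main obstacle I anticipate is bookkeeping rather than conceptual: one must be careful that the norm in which each estimate is stated matches the filter bound being applied — $\covnorm{\cdot}$ estimates require pulling out a $\cov^{-1/2}$ and recognizing the remaining operator as a function of $T = B^*B$ acting on a vector of bounded $\XX$-norm, whereas $\noicovnorm{\cdot}$ estimates on the residual require pushing through $R^{1/2}$ and working with $BB^*$ on $\YY$. The intertwining identities $f(B^*B)B^* = B^* f(BB^*)$ and $B f(B^*B) = f(BB^*) B$ (valid for bounded Borel $f$) must be invoked to move between these two pictures; these are exactly the identities already used in the proof of \autoref{ekiEstimate}, so I would cite \cite[section 2.3]{EngHanNeu96} for the spectral-calculus facts and the moment inequality used in \eqref{eq:det_alpha1}, and otherwise the proof is a routine chain of filter-function estimates.
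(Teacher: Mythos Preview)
Your approach is correct and uses exactly the machinery the paper relies on, just unpacked. The paper's own proof is much shorter: it observes that the substitution $x = x_0 + C_0^{1/2} w$ turns the $(C_0, R)$-weighted Tikhonov problem into the standard unweighted problem for the operator $B = R^{-1/2} L C_0^{1/2}$ (so that $\covnorm{x_\alpha - x^\dagger} = \xnorm{w_\alpha - w^\dagger}$ and $\noicovnorm{L x_\alpha - y} = \ynorm{B w_\alpha - R^{-1/2} y}$), and then simply cites the corresponding classical results from \cite{EngHanNeu96} --- namely (4.66), (4.68), (4.70) and the proof of Theorem~4.17 --- rather than rederiving the filter-function bounds. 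Your direct computation with $g_\alpha$, $r_\alpha$ and the intertwining identities is precisely what lies behind those citations, so the two routes coincide; the paper's packaging is just more economical.

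One bookkeeping slip to fix: in your sketch of \eqref{eq:det_alpha1} you write $\covnorm{x_\alpha - x^\dagger}^{2\mu+1} \le \xnorm{r_\alpha(T) T^{1/2} v}^{2\mu} \rho$, but the correct exponent on $T$ is $\mu + \tfrac12$, not $\tfrac12$. The moment inequality should be applied to $r_\alpha(T) v$ with exponents $\mu$ and $\mu + \tfrac12$, giving
\[
\xnorm{T^\mu r_\alpha(T) v} \le \xnorm{T^{\mu+1/2} r_\alpha(T) v}^{\frac{2\mu}{2\mu+1}} \xnorm{r_\alpha(T) v}^{\frac{1}{2\mu+1}},
\]
after which $\xnorm{T^{\mu+1/2} r_\alpha(T) v} = \noicovnorm{L x_\alpha - y}$ by your own computation for \eqref{eq:det_alpha4} and $\xnorm{r_\alpha(T) v} \le \rho$. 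With that correction the argument goes through cleanly.
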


\begin{proof}
Note that $\xmns$ is a $(\xo, \cov)$-minimum-norm solution of \autoref{eq:inverseProblem} if and only if $\xmns = \xo + \cov^{1/2} w^\dagger$, where $w^\dagger$ is a $(0,\Idx)$-minimum-norm solution of
\begin{align}
R^{-1/2} y = B w, \label{eq:transformed_problem}
\end{align}
where $B := R^{-1/2} L \cov^{1/2}$. Similary, if $w_\alpha$ is the corresponding Tikhonov-regularized solution of \autoref{eq:transformed_problem}, i.e.
\begin{align*}
w_\alpha = \left(B^*B + \alpha \Idx \right)^{-1} B^*R^{-1/2}y,
\end{align*}
then $x_\alpha = \xo + \cov^{1/2} w_\alpha$. Thus, the results follow from the classical case where $\cov = \Idx$ and $\noicov = \Idy$: The inequalities \autoref{eq:det_alpha1}, \autoref{eq:det_alpha2} and \autoref{eq:det_alpha3} can be found in \cite[(4.66)]{EngHanNeu96}, \cite[(4.68)]{EngHanNeu96} and \cite[(4.70)]{EngHanNeu96},
respectively. \autoref{eq:det_alpha4} can be obtained from the source condition \autoref{eq:standardSourceCondition} and the interpolation inequality \cite[(4.64)]{EngHanNeu96}, as in the proof of \cite[theorem 4.17]{EngHanNeu96}.
\end{proof}

Moreover, for the deterministic stopping time $k_\delta$ the following auxiliary result holds.

\begin{lemma}\label{lem:deterministic_stopping}
Given \autoref{assumption}, \autoref{source_condition}.
\begin{enumerate}
\item There exists a constant $c_3$, independent of $\alpha$, $\delta$, and $\rho$, such that
\begin{align}
\alpha_{k_\delta} \geq c_3 \left(\frac{\delta}{\rho} \right)^\frac{2}{2 \mu + 1} \label{eq:alpha_estimate}
\end{align}
for all sufficiently small $\delta$.
\item There holds
\begin{align}
k_\delta = O \left( \log(\delta^{-1}) \right). \label{eq:k_delta_estimate}
\end{align}
\item If \autoref{eq:r_large_enough} holds, then there exists a sufficiently small $\bar{\delta} > 0$ such that
\begin{align}
\xtik_{\alpha_k} \in \xoball \qquad \text{for all } k \leq k_\delta \text{ and all } \delta \leq \bar{\delta}. \label{eq:tikhonov_boundedness}
\end{align}
\end{enumerate}
\end{lemma}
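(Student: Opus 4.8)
The plan is to prove the three parts of \autoref{lem:deterministic_stopping} by reducing everything to the classical theory of Tikhonov regularization recalled in \autoref{det_alpha}, applied to the transformed problem $R^{-1/2}y = Bw$ with $B = R^{-1/2}LC_0^{1/2}$. The only genuinely new object is the geometric sequence $\alpha_k = b^k\alpha_0$, so I expect most of the work to be in translating the standard a-priori/a-posteriori estimates for a continuous parameter $\alpha$ into statements about this discrete sequence.

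For part \emph{(i)}, first I would observe that by definition of $k_\delta$, for $k = k_\delta - 1$ we have $\noicovnorm{\obs - L\xtik_{\alpha_{k_\delta-1}}} > (\tau-\epsilon)\delta$. Using \autoref{eq:det_alpha2} this yields $\noicovnorm{y - L x_{\alpha_{k_\delta-1}}} \geq \noicovnorm{\obs - L\xtik_{\alpha_{k_\delta-1}}} - \delta > (\tau-\epsilon-1)\delta$, where $\tau - \epsilon - 1 > 0$ since $\epsilon < \tau - 1$. Combining this with the upper bound \autoref{eq:det_alpha4}, $\noicovnorm{y - Lx_{\alpha_{k_\delta-1}}} \leq c_2\rho\,\alpha_{k_\delta-1}^{\mu+1/2}$, gives $\alpha_{k_\delta-1}^{\mu+1/2} \geq c_2^{-1}(\tau-\epsilon-1)(\delta/\rho)$, hence a lower bound $\alpha_{k_\delta-1} \geq \text{const}\cdot(\delta/\rho)^{2/(2\mu+1)}$; finally $\alpha_{k_\delta} = b\,\alpha_{k_\delta-1}$ absorbs the extra factor $b$ into the constant $c_3$. (The ``sufficiently small $\delta$'' is needed to ensure $k_\delta \geq 1$, i.e.\ that the discrepancy is not already below $(\tau-\epsilon)\delta$ at $\alpha_0$; this follows from \autoref{eq:det_alpha4} since $\noicovnorm{\obs - L\xtik_{\alpha_0}} \to \noicovnorm{\obs - Lx^\dagger} = \noicovnorm{\obs - y}$-type considerations — more precisely one uses that $\noicovnorm{y - Lx_{\alpha_0}}$ is a fixed positive number while the threshold $(\tau-\epsilon)\delta \to 0$, so $k_\delta$ is well-defined and positive for small $\delta$.)

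For part \emph{(ii)}, the standard a-priori bound for Tikhonov states $\noicovnorm{y - Lx_\alpha} \leq c_2\rho\,\alpha^{\mu+1/2} \to 0$ as $\alpha \to 0$, and combined with \autoref{eq:det_alpha2} this shows $\noicovnorm{\obs - L\xtik_\alpha} \leq \delta + c_2\rho\,\alpha^{\mu+1/2}$. Hence $\noicovnorm{\obs - L\xtik_{\alpha_k}} \leq (\tau-\epsilon)\delta$ as soon as $c_2\rho\,\alpha_k^{\mu+1/2} \leq (\tau-\epsilon-1)\delta$, i.e.\ as soon as $b^{k(\mu+1/2)} \leq \text{const}\cdot\delta$. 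Solving for $k$ gives $k_\delta \leq \frac{1}{(\mu+1/2)|\log b|}\log(\text{const}/\delta) + O(1) = O(\log(\delta^{-1}))$. I would also note (if needed for the argument) the matching lower order from part \emph{(i)} via $\alpha_{k_\delta} = b^{k_\delta}\alpha_0 \geq c_3(\delta/\rho)^{2/(2\mu+1)}$, which gives $k_\delta \leq \text{const}\cdot\log(\delta^{-1})$ as well.

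For part \emph{(iii)}, I would write $\xtik_{\alpha_k} - x_0 = (\xtik_{\alpha_k} - x_{\alpha_k}) + (x_{\alpha_k} - x^\dagger) + (x^\dagger - x_0)$ and estimate the $\XX$-norm of each term, using \eqref{eq:c_norm_stronger} to pass from $\covnorm{\cdot}$ to $\xnorm{\cdot}$ on the first two. By \autoref{eq:det_alpha3}, $\covnorm{\xtik_{\alpha_k} - x_{\alpha_k}} \leq c_1\delta\alpha_k^{-1/2}$, and by part \emph{(i)}, $\alpha_k \geq \alpha_{k_\delta} \geq c_3(\delta/\rho)^{2/(2\mu+1)}$ for all $k \leq k_\delta$, so $\delta\alpha_k^{-1/2} \leq c_1 c_3^{-1/2}\rho^{1/(2\mu+1)}\delta^{1 - 1/(2\mu+1)} = \text{const}\cdot\delta^{2\mu/(2\mu+1)} \to 0$. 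By \autoref{eq:det_alpha1}, $\covnorm{x_{\alpha_k} - x^\dagger} \leq \rho^{1/(2\mu+1)}\noicovnorm{Lx_{\alpha_k}-y}^{2\mu/(2\mu+1)} \leq \text{const}\cdot\alpha_k^{\mu(2\mu+1)/(2\mu+1)}$-type bound via \autoref{eq:det_alpha4}, hence $\to 0$ as well since $\alpha_k \leq \alpha_0$ and one can show $\alpha_{k_\delta} \to 0$ with $\delta$ (from part \emph{(i)}, the lower bound goes to $0$, and $\alpha_{k_\delta}$ is sandwiched — actually one needs $\alpha_{k_\delta}\to 0$, which follows since $\alpha_{k_\delta} = b\,\alpha_{k_\delta-1}$ and $\alpha_{k_\delta-1}^{\mu+1/2}\leq \text{const}\cdot\delta$ from the definition of $k_\delta$ combined with \autoref{eq:det_alpha2} and \autoref{eq:det_alpha4} run at $k_\delta$, not $k_\delta-1$). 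Multiplying the $\covnorm{\cdot}$ bounds by $\xopnorm{C_0^{1/2}}$ and using \eqref{eq:r_large_enough}, which gives $\xnorm{x^\dagger - x_0} \leq \xopnorm{C_0^{1/2}}\covnorm{x_0 - x^\dagger} \leq r/2$, we conclude that the sum of all three norms is $\leq r/2 + o(1) \leq r$ for all $\delta \leq \bar\delta$ with $\bar\delta$ small enough; since this bound is uniform in $k \leq k_\delta$, the claim $\xtik_{\alpha_k}\in\xoball$ follows.

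The main obstacle I anticipate is bookkeeping the uniformity: the estimates \autoref{eq:det_alpha3}--\autoref{eq:det_alpha4} hold for every individual $\alpha$, but in part \emph{(iii)} one needs them to be small \emph{simultaneously} for all $k \leq k_\delta$, which is why the lower bound on $\alpha_{k_\delta}$ from part \emph{(i)} is the crucial ingredient — it controls the worst case $\alpha_k = \alpha_{k_\delta}$ in the stability term $\delta\alpha_k^{-1/2}$, while monotonicity $\alpha_k \leq \alpha_0$ handles the approximation term. One must also be a little careful that the constant $c_3$ in part \emph{(i)} does not depend on $\delta$ or $\rho$ (it doesn't, since it only involves $b$, $c_2$, $\tau$, $\epsilon$), so that the resulting bound in \emph{(iii)} is a genuine $o(1)$ as $\delta\to 0$.
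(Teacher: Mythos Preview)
Your arguments for parts \emph{(i)} and \emph{(ii)} are correct and coincide with what the paper does (the paper simply cites the proof of theorem~4.17 in \cite{EngHanNeu96} for part~\emph{(i)} and then takes logarithms in \autoref{eq:alpha_estimate} for part~\emph{(ii)}, which is exactly what you wrote out).

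For part \emph{(iii)} there is a genuine gap. You split $\xtik_{\alpha_k}-x_0$ into three pieces and claim that the middle piece $\covnorm{x_{\alpha_k}-x^\dagger}$ is $o(1)$ as $\delta\to 0$, uniformly in $k\le k_\delta$. This is false: for small $k$ (say $k=1$) one has $\alpha_k=b\alpha_0$, a fixed positive number independent of $\delta$, so $\covnorm{x_{\alpha_1}-x^\dagger}$ is a fixed constant that does not vanish. Your bound via \autoref{eq:det_alpha1} and \autoref{eq:det_alpha4} gives $\covnorm{x_{\alpha_k}-x^\dagger}\le\text{const}\cdot\alpha_k^{\mu}$, which is small only when $\alpha_k$ is small; the inequality $\alpha_k\le\alpha_0$ points the wrong way, and the fact that $\alpha_{k_\delta}\to 0$ does not help for the early iterates.

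The paper avoids this by bounding $\covnorm{\xtik_{\alpha_k}-x_0}$ directly rather than via $x^\dagger$. In the transformed variable $\wtik_\alpha=(B^*B+\alpha\Idx)^{-1}B^*R^{-1/2}(\obs-Lx_0)$ one writes
\begin{align*}
\wtik_\alpha = (B^*B+\alpha\Idx)^{-1}B^*R^{-1/2}(\obs-y) + (B^*B+\alpha\Idx)^{-1}B^*B\,w^\dagger,
\end{align*}
and the spectral bound $\xopnorm{(B^*B+\alpha\Idx)^{-1}B^*B}\le 1$ gives $\xnorm{\wtik_\alpha}\le\tfrac12\alpha^{-1/2}\delta+\xnorm{w^\dagger}$. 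Combined with part~\emph{(i)} this yields $\covnorm{\xtik_{\alpha_k}-x_0}\le 2\covnorm{x^\dagger-x_0}$ for all $k\le k_\delta$ and small $\delta$, and then \autoref{eq:r_large_enough} gives exactly $\xnorm{\xtik_{\alpha_k}-x_0}\le r$. Your decomposition can be repaired the same way: replace the two pieces $(x_{\alpha_k}-x^\dagger)+(x^\dagger-x_0)$ by the single piece $(x_{\alpha_k}-x_0)$ and use the monotonicity $\covnorm{x_{\alpha_k}-x_0}\le\covnorm{x^\dagger-x_0}$ (which is precisely $\xopnorm{(B^*B+\alpha\Idx)^{-1}B^*B}\le 1$). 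The sum then becomes $o(1)+\covnorm{x^\dagger-x_0}$ in the $\covnorm{\cdot}$-norm, and after multiplying by $\xopnorm{\cov^{1/2}}$ and using \autoref{eq:r_large_enough} you obtain $o(1)+r/2\le r$ for $\delta\le\bar\delta$.
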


\begin{proof}
\begin{enumerate}
\item Using the same transformations as in the proof of \autoref{det_alpha}, the statement follows from the proof of theorem 4.17 in \cite{EngHanNeu96}. Note that this proof uses a discrepancy principle where $\alpha$ can vary continuously. However, the same argument applies also to the discretized sequence satisfying \autoref{eq:alpha_k}, see \cite[remark 4.18]{EngHanNeu96}.
\item Inserting \autoref{eq:alpha_k} in \autoref{eq:alpha_estimate} yields
\begin{align*}
b^{k_\delta} \alpha_0 \geq c_3 \left(\frac{\delta}{\rho} \right)^\frac{2}{2 \mu + 1},
\end{align*}
or equivalently
\begin{align*}
\left(\frac{1}{b}\right)^{k_\delta} \leq \frac{\alpha_0}{c_3} \left( \frac{\rho}{\delta} \right)^\frac{2}{2 \mu + 1}.
\end{align*}
Taking the logarithm and using the fact that $b \in (0,1)$, we arrive at
\begin{align*}
k_\delta \leq \log(b^{-1})^{-1} \cdot \left[ \log \left(\frac{\alpha_0}{c_3} \right) + \frac{2}{2 \mu + 1} \log \left( \frac{\rho}{\delta} \right) \right].
\end{align*}
This proves \autoref{eq:k_delta_estimate}.
\item As in the proof of \autoref{det_alpha}, let $B = R^{-1/2} L \cov^{1/2}$, $x^\dagger = x_0 + \cov w^\dagger$ and $\xtik_\alpha = \xo + \cov^{1/2} \wtik_\alpha$, such that
\begin{align*}
\wtik_\alpha = \left( B^* B + \alpha \Id_\XX \right)^{-1} B^* R^{-1/2}(\obs - L \xo).
\end{align*}
Then
\begin{align}
\wtik_\alpha &= \left( B^* B + \alpha \Id_\XX \right)^{-1} B^* R^{-1/2}(\obs - L \xo) \notag \\
& = \left( B^* B + \alpha \Id_\XX \right)^{-1} B^* R^{-1/2}(\obs - y) + \left( B^* B + \alpha \Id_\XX \right)^{-1} B^* R^{-1/2}(y - L x_0). \label{eq:wtik_wdagger_equation}
\end{align}
If we insert $y = L x^\dagger = L (x_0 + \cov^{1/2} w^\dagger)$ in the second term on the right-hand side of \autoref{eq:wtik_wdagger_equation}, we obtain after cancellation and using the definition of $B$,
\begin{align*}
\wtik_\alpha - w^\dagger &= \left(B^* B + \alpha \Id_\XX \right)^{-1} B^* R^{-1/2}(\obs - y) + \left(B^* B + \alpha \Id_\XX \right)^{-1} B^* B w^\dagger. \label{eq:wtik_equation2}
\end{align*}
Using \autoref{eq:noise_level} and the spectral estimates (see e.g. \cite[lemma 4.5]{KalNeuSch08})
\begin{align*}
\xopnorm{\left( B^* B + \alpha \Id_\XX \right)^{-1} B^*} & \leq \frac{1}{2}\alpha^{-1/2}, \\
\text{and} \quad \xopnorm{\left( B^* B + \alpha \Id_\XX \right)^{-1} B^* B} & \leq 1,
\end{align*}
in \autoref{eq:wtik_wdagger_equation}, we obtain
\begin{align}
\xnorm{\wtik_\alpha - w^\dagger} \leq \frac{1}{2} \alpha^{-1/2} \delta + \xnorm{w^\dagger}. \label{eq:wtik_estimate}
\end{align}
Finally, it follows from \autoref{eq:alpha_estimate} that
\begin{align}
\alpha^{-1/2}_k \delta \leq \alpha^{-1/2}_{k_\delta} \delta \leq c_3 \left(\frac{\rho}{\delta} \right)^\frac{1}{2 \mu + 1} \delta = c_3 \rho^\frac{1}{2 \mu + 1} \delta^\frac{2}{2 \mu + 1} \qquad \text{for all } k \leq k_\delta, \label{eq:alpha_delta_estimate}
\end{align}
which vanishes as $\delta \to 0$. Therefore, if we set 
\begin{align*}
\bar \delta = 2^\frac{2 \mu + 1}{2} c_3^{-\frac{2 \mu + 1}{2}} \rho^{-\frac{1}{2}} \xnorm{w^\dagger}^\frac{2 \mu + 1}{2},
\end{align*}
then it follows from \autoref{eq:wtik_estimate} and \autoref{eq:alpha_delta_estimate} that
\begin{align}
\xnorm{\wtik_{\alpha_k}} \leq 2 \xnorm{w^\dagger} \qquad \text{for all } k \leq k_\delta \label{eq:wtik_less_2wdagger}
\end{align}
holds for all $\delta \leq \bar \delta$. By definition of $\wtik_{\alpha_k}$, \autoref{eq:wtik_less_2wdagger} implies
\begin{align*}
\covnorm{\xtik_{\alpha_k} - x_0} = \xnorm{\wtik_{\alpha_k}} \leq 2 \xnorm{w^\dagger} = 2 \covnorm{x^\dagger - x_0},
\end{align*}
and hence, by \autoref{eq:c_norm_stronger} and \autoref{eq:r_large_enough},
\begin{align*}
\xnorm{\xtik_{\alpha_k} - x_0} \leq \xopnorm{\cov^{1/2}} \covnorm{\xtik_{\alpha_k} - x_0} \leq 2 \xopnorm{\cov^{1/2}} \covnorm{x^\dagger - x_0} \leq r,
\end{align*}
for all $k \leq k_\delta$ and $\delta \leq \bar{\delta}$.
\end{enumerate}
\end{proof}

With this lemma, we are able to show that the projection in \autoref{eq:adaptive_EKI} cannot increase the approximation error between adaptive EKI and the corresponding Tikhonov iteration, at least for $k \leq k_\delta$. More precisely, we have the following proposition.

\begin{proposition}\label{pr:adaptive_eki_error}
Let \autoref{assumption}, \autoref{source_condition} and \autoref{eq:r_large_enough} hold. Let $\delta \leq \bar \delta$, where $\bar \delta$ is as in \autoref{lem:deterministic_stopping}. Then
\begin{align}
\xnorm{\Xad_k(\omega) - \xtik_{\alpha_k}} \leq \xnorm{\Xhvar{J_k}_{\alpha_k}(\omega) - \xtik_{\alpha_k}} \qquad \text{for all } \omega \in \Omega \text{ and all } k \leq k_\delta. \label{eq:projection_not_worse}
\end{align}
In particular,
\begin{align}
\Lpxnorm{\Xad_k - \xtik_{\alpha_k}}{p} \leq \kappa_p \phi(\alpha_k) J_k^{- \gamma} \qquad \text{for all } k \leq k_\delta, \label{eq:adaptive_eki_converges}
\end{align}
where $\kappa_p$ is as in \autoref{large_ensemble_convergence}.
\end{proposition}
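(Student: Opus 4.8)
The plan is to exploit the fact that $\projball$ is the orthogonal projection onto the closed convex set $\xoball$, together with the boundedness statement \autoref{eq:tikhonov_boundedness} from \autoref{lem:deterministic_stopping}, which guarantees $\xtik_{\alpha_k} \in \xoball$ for all $k \leq k_\delta$ once $\delta \leq \bar\delta$. The key fact about orthogonal projections onto a closed convex set $K$ in a Hilbert space is that they are \emph{nonexpansive}: $\xnorm{\projball(u) - \projball(v)} \leq \xnorm{u - v}$ for all $u,v \in \XX$. Applying this with $u = \Xhvar{J_k}_{\alpha_k}(\omega)$ and $v = \xtik_{\alpha_k}$, and using that $\projball(v) = v$ because $v \in \xoball$, gives immediately
\begin{align*}
\xnorm{\Xad_k(\omega) - \xtik_{\alpha_k}} = \xnorm{\projball(\Xhvar{J_k}_{\alpha_k}(\omega)) - \projball(\xtik_{\alpha_k})} \leq \xnorm{\Xhvar{J_k}_{\alpha_k}(\omega) - \xtik_{\alpha_k}},
\end{align*}
which is exactly \autoref{eq:projection_not_worse}. (One should remark that by definition \autoref{eq:adaptive_EKI}, $\Xad_k = \projball(\Xhvar{J_k}_{\alpha_k})$ unconditionally, so no case distinction on whether the iterate already lies in the ball is needed here.)

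For the second assertion, I would take the $L^p$-norm (in $\omega$) on both sides of \autoref{eq:projection_not_worse}. Monotonicity of $\Exp{\cdot}$ and of $t \mapsto t^{1/p}$ turns the pointwise inequality into
\begin{align*}
\Lpxnorm{\Xad_k - \xtik_{\alpha_k}}{p} \leq \Lpxnorm{\Xhvar{J_k}_{\alpha_k} - \xtik_{\alpha_k}}{p} \qquad \text{for all } k \leq k_\delta.
\end{align*}
Now the right-hand side is controlled by \autoref{large_ensemble_convergence}(ii): since $(\Aj)_{J=1}^\infty$ generates a stochastic low-rank approximation of $\cov$ of $p$-order $\gamma$, we have $\Lpxnorm{\Xhvar{J}_{\alpha} - \xtik_{\alpha}}{p} \leq \kappa_p \alpha^{-1} J^{-\gamma}$ for every $\alpha > 0$ and every $J \in \N$; specializing to $\alpha = \alpha_k$ and $J = J_k$ yields \autoref{eq:adaptive_eki_converges}.

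I do not expect any genuine obstacle here; the proposition is essentially a bookkeeping step that packages the nonexpansiveness of the projection with two earlier results. The only points requiring a little care are: (a) invoking \autoref{eq:tikhonov_boundedness} correctly, which is why the hypothesis $\delta \leq \bar\delta$ and \autoref{eq:r_large_enough} are needed — without $\xtik_{\alpha_k} \in \xoball$ the identity $\projball(\xtik_{\alpha_k}) = \xtik_{\alpha_k}$ fails and the argument breaks; and (b) checking that the nonexpansiveness is applied in the $\XX$-norm (not the $\covnorm{\cdot}$-norm), which is consistent since $\projball$ is defined as the orthogonal projection with respect to $\xinner{\cdot}{\cdot}$ on the ball $\xoball$ measured in $\xnorm{\cdot}$. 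If one wanted to be fully self-contained one could include the two-line proof that orthogonal projection onto a closed convex set is nonexpansive (it follows from the variational characterization $\xinner{u - \projball u}{z - \projball u} \leq 0$ for all $z \in \xoball$), but citing it as standard is also acceptable.
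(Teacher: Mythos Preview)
Your proposal is correct and follows essentially the same approach as the paper: both use \autoref{eq:tikhonov_boundedness} to place $\xtik_{\alpha_k}$ inside $\xoball$ and then invoke the variational characterization of the orthogonal projection to obtain \autoref{eq:projection_not_worse}, with \autoref{eq:adaptive_eki_converges} following from \autoref{large_ensemble_convergence}. The only cosmetic difference is that the paper writes out the nonexpansiveness argument explicitly (via the inequality $\inner{\Xhvar{J_k}_{\alpha_k}(\omega) - \projball(\Xhvar{J_k}_{\alpha_k}(\omega))}{\xtik_{\alpha_k} - \projball(\Xhvar{J_k}_{\alpha_k}(\omega))} \leq 0$ and expanding the squared norm), whereas you cite it as a standard fact.
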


\begin{proof}
Let $k \leq k_\delta$ and $\omega \in \Omega$. By \autoref{eq:adaptive_EKI}, we have
\begin{align}
\xnorm{\Xad_k(\omega) - \xtik_{\alpha_k}} = \xnorm{\projectedeki - \xtik_{\alpha_k}}. \label{eq:consider_the_projection}
\end{align}
By \autoref{lem:deterministic_stopping}, we have $\xtik_{\alpha_k} \in \xoball$. Consequently, by the property of the orthogonal projection, we have
\begin{align}
\inner{\Xhvar{J_k}_{\alpha_k}(\omega) - \projectedeki}{ \xtik_{\alpha_k} - \projectedeki} \leq 0. \label{eq:projection_property}
\end{align}
This yields
\begin{align*}
\xnorm{\Xhvar{J_k}_{\alpha_k}(\omega) - \xtik_{\alpha_k}}^2 & = \xnorm{\Xhvar{J_k}_{\alpha_k}(\omega) - \projectedeki + \projectedeki - \xtik_{\alpha_k}}^2 \\
& = \xnorm{\Xhvar{J_k}_{\alpha_k}(\omega) - \projectedeki}^2 + \xnorm{\projectedeki - \xtik_{\alpha_k}}^2 \\
& \qquad + 2 \inner{\Xhvar{J_k}_{\alpha_k}(\omega) - \projectedeki}{\projectedeki - \xtik_{\alpha_k}} \\
& \geq \xnorm{\projectedeki - \xtik_{\alpha_k}}^2.
\end{align*}
Together with \autoref{eq:consider_the_projection}, this yields \autoref{eq:projection_not_worse}. \autoref{eq:adaptive_eki_converges} then follows from \autoref{eq:projection_not_worse} and \autoref{large_ensemble_convergence}.
\end{proof}

The next proposition provides the desires asymptotic convergence rates of the probability $\PP(\goodset^\complement)$.

\begin{proposition}\label{complement_estimate}
Given \autoref{assumption}, \autoref{source_condition} and \autoref{parameter_assumptions}, there holds
\begin{equation}
\PP(\goodset^\complement) = O(\delta^\frac{2 \mu q}{2 \mu + 1}). \label{eq:complement_estimate}
\end{equation}
\end{proposition}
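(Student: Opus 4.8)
The plan is to bound $\PP(\goodset^\complement)$ via a union bound over the finitely many events $(\goodsetk)^\complement$ for $k \le k_\delta$, and to control each of these by a Markov-type inequality applied to the $L^p$-estimate from \autoref{pr:adaptive_eki_error}. Concretely, since $\goodset = \bigcap_{k=1}^{k_\delta} \goodsetk$, we have
\begin{align*}
\PP(\goodset^\complement) \leq \sum_{k=1}^{k_\delta} \PP\left( \xnorm{\Xad_k - \xtik_{\alpha_k}} > c_{RL}^{-1} \epsilon \delta \right).
\end{align*}
By Markov's inequality applied with exponent $p$ and the bound \autoref{eq:adaptive_eki_converges} (valid because $k \le k_\delta$ and $\delta \le \bar\delta$, which we may assume), each summand is at most
\begin{align*}
\left( \frac{c_{RL}}{\epsilon \delta} \right)^p \Exp{\xnorm{\Xad_k - \xtik_{\alpha_k}}^p} \leq \left( \frac{c_{RL}}{\epsilon \delta} \right)^p \left( \kappa_p \alpha_k^{-1} J_k^{-\gamma} \right)^p = \left( \frac{c_{RL} \kappa_p}{\epsilon \delta\, \alpha_k J_k^{\gamma}} \right)^p.
\end{align*}

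Here is where \autoref{parameter_assumptions} enters decisively: \autoref{eq:adapted_size_stochastic} gives $\alpha_k J_k^\gamma \geq c_{RL} \kappa_p / (\epsilon \delta^{1 + q/p})$ for every $k$, so the fraction inside the $p$-th power is bounded by $\delta^{1+q/p}/\delta = \delta^{q/p}$, and each summand is therefore at most $\delta^q$. Summing over $k = 1, \dots, k_\delta$ and invoking \autoref{eq:k_delta_estimate}, i.e. $k_\delta = O(\log(\delta^{-1}))$, yields
\begin{align*}
\PP(\goodset^\complement) \leq k_\delta\, \delta^q = O\left( \delta^q \log(\delta^{-1}) \right).
\end{align*}
It remains to absorb the logarithm and reconcile the exponent $q$ with the claimed exponent $\frac{2\mu q}{2\mu+1}$. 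Since $\mu \in (0, 1/2]$ we have $\frac{2\mu}{2\mu+1} \in (0, 1/2] \subset (0,1)$, so $\frac{2\mu q}{2\mu+1} < q$, and for small $\delta$ the factor $\log(\delta^{-1})$ is dominated by any positive power of $\delta^{-1}$; in particular $\delta^q \log(\delta^{-1}) = O(\delta^{2\mu q/(2\mu+1)})$. This gives \autoref{eq:complement_estimate}.

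The main obstacle I anticipate is not the union bound or Markov step, which are routine, but making sure the hypotheses of \autoref{pr:adaptive_eki_error} are genuinely in force for all $k \le k_\delta$ — in particular that we are entitled to assume $\delta \le \bar\delta$ (the threshold from \autoref{lem:deterministic_stopping}), since the statement of \autoref{complement_estimate} is asymptotic in $\delta$ and so this is legitimate, but it must be stated. A secondary subtlety is that $k_\delta$ itself depends on $\delta$ through \autoref{eq:alpha_estimate}, so the summation bound $k_\delta \delta^q$ must use the explicit logarithmic growth rather than treating $k_\delta$ as constant; the slack between $q$ and $\frac{2\mu q}{2\mu+1}$ is precisely what buys room for this logarithmic factor, which is why the exponent in \autoref{eq:complement_estimate} is not simply $q$.
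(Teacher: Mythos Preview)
Your proposal is correct and follows essentially the same route as the paper: union bound over $k \le k_\delta$, Markov's inequality with exponent $p$, the approximation bound from \autoref{pr:adaptive_eki_error} combined with \autoref{eq:adapted_size_stochastic} to get each summand $\le \delta^q$, and then absorbing the logarithmic factor $k_\delta = O(\log(\delta^{-1}))$ into the slack $q - \tfrac{2\mu q}{2\mu+1} > 0$. The paper phrases this last step as $k_\delta\,\delta^{(1-\frac{2\mu}{2\mu+1})q} \to 0$, which is exactly your observation; your remarks on $\delta \le \bar\delta$ and the $\delta$-dependence of $k_\delta$ match the paper's handling as well.
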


\begin{proof}
By \autoref{eq:goodset} and the subadditivity of $\PP$, we have
\begin{align}
\PP(\goodset^\complement) = \PP(\bigcup_{k=1}^{k_\delta}(\goodsetk)^\complement) \leq \sum_{k=1}^{k_\delta} \PP((\goodsetk)^\complement). \label{eq:goodset_estimate}
\end{align}
By \autoref{eq:goodsetk} and Markov's inequality (see \autoref{markov}), we have
\begin{align}
\PP((\goodsetk)^\complement) & = \PP\left(\Set{\omega \in \Omega}{\xnorm{\Xad_k - \xtik_{\alpha_k}} > c_{RL}^{-1} \epsilon \delta} \right) \notag\\
& \leq \frac{c_{RL}^p \Exp{\xnorm{\Xad_k - \xtik_{\alpha_k}}^p}}{\epsilon^p \delta^p}. \label{eq:goodsetk_estimate1}
\end{align}
Without loss of generality, let $\delta \leq \bar \delta$, where $\bar \delta$ is as in \autoref{lem:deterministic_stopping}. Using \autoref{pr:adaptive_eki_error} and then \autoref{eq:adapted_size_stochastic} in \autoref{eq:goodsetk_estimate1} yields
\begin{align*}
\PP((\goodsetk)^\complement) \leq \frac{c_{RL}^p \kappa_p^p \phi(\alpha_k)^p J_k^{-p \gamma}}{\epsilon^p \delta^p} \leq \delta^q.
\end{align*}
Inserting this inequality in \autoref{eq:goodset_estimate}, we arrive at
\begin{align}
\PP(\goodset^\complement) \leq \sum_{k=1}^{k_\delta} \delta^q = k_\delta \delta^q. \label{eq:goodset_estimate_k_delta}
\end{align}
From \autoref{lem:deterministic_stopping} we know that $k_\delta = O(\log(\delta^{-1}))$. Since we have $1 - \frac{2\mu}{2 \mu + 1} > 0$, we obtain
\begin{align}
k_\delta \delta^{(1 - \frac{2\mu}{2 \mu + 1})q} \to 0 \qquad \text{as } \delta \to 0. \label{eq:k_delta_majorized}
\end{align}
Hence, we have from \autoref{eq:goodset_estimate_k_delta} that
\begin{align*}
\PP(\goodset^\complement) \leq k_\delta \delta^{(1 - \frac{2\mu}{2 \mu + 1})q} \delta^\frac{2 \mu q}{2 \mu + 1} = O(\delta^\frac{2 \mu q}{2 \mu + 1}).
\end{align*}
\end{proof}

Finally, we show convergence of the random element $\Xad_{K_\delta}$ on the "good set" $\goodset$. The construction of $\goodset$ allows to apply the proof of \cite[theorem 4.17]{EngHanNeu96} with straightforward modifications to each individual realization $\Xad_{K_\delta(\omega)}(\omega)$, given $\omega \in \goodset$.

\begin{proposition}\label{convergence_on_good_set}
Given \autoref{assumption}, \autoref{source_condition} and \autoref{parameter_assumptions}, there exists $C > 0$, independent of $\omega$ and $\delta$, such that
\begin{align}
\xnorm{\Xad_{K_\delta(\omega)}(\omega) - x^\dagger} \leq C \delta^\frac{2 \mu}{2 \mu + 1} \qquad \text{for all } \omega \in \goodset. \label{eq:rate_on_good_set}
\end{align}
\end{proposition}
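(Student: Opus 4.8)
The plan is to carry out the deterministic convergence analysis of the discrepancy principle for Tikhonov regularization (essentially \cite[theorem 4.17]{EngHanNeu96}) but pointwise in $\omega \in \goodset$, using the good-set bound to control the difference between $\Xad_k(\omega)$ and the exact Tikhonov iterate $\xtik_{\alpha_k}$. Fix $\omega \in \goodset$ and write $K = K_\delta(\omega)$ for the realized stopping index. The triangle inequality gives
\begin{align*}
\xnorm{\Xad_{K}(\omega) - x^\dagger} \leq \xnorm{\Xad_{K}(\omega) - \xtik_{\alpha_{K}}} + \xopnorm{\cov^{1/2}} \covnorm{\xtik_{\alpha_{K}} - x_\alpha} + \xopnorm{\cov^{1/2}} \covnorm{x_\alpha - x^\dagger},
\end{align*}
where in the last two terms I use \autoref{eq:c_norm_stronger} to pass from the $\XX$-norm to the $\cov$-norm and $x_\alpha$ is the exact-data Tikhonov solution from \autoref{eq:unperturbed_tikhonov} at $\alpha = \alpha_K$. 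The first term is $\leq c_{RL}^{-1}\epsilon\delta$ by the definition of $\goodsetk$ (valid since $K \leq k_\delta$, which must be verified — see below). The middle term is $\leq c_1 \xopnorm{\cov^{1/2}} \delta \alpha_K^{-1/2}$ by \autoref{eq:det_alpha3}. The last term is handled by \autoref{eq:det_alpha1} together with \autoref{eq:det_alpha4}, giving $\covnorm{x_{\alpha_K} - x^\dagger} \leq \rho^{1/(2\mu+1)} \noicovnorm{Lx_{\alpha_K} - y}^{2\mu/(2\mu+1)} \leq \rho^{1/(2\mu+1)} (c_2 \rho \alpha_K^{\mu+1/2})^{2\mu/(2\mu+1)} = C' \rho \alpha_K^{\mu}$.

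So everything reduces to controlling $\alpha_K$ from below and above. For the lower bound, I need $K \geq k_\delta$ does \emph{not} hold in a bad way; more precisely I need to show $K \leq k_\delta$, so that the good-set estimate applies and so that $\alpha_K \geq \alpha_{k_\delta} \geq c_3(\delta/\rho)^{2/(2\mu+1)}$ by \autoref{eq:alpha_estimate}; inserting this into the middle term gives $\delta \alpha_K^{-1/2} \leq c_3^{-1/2}\rho^{1/(2\mu+1)}\delta^{2\mu/(2\mu+1)}$, which is of the right order, and into the last term gives $\alpha_K^\mu \geq$ something, but there I actually want an \emph{upper} bound on $\alpha_K$ to bound $\alpha_K^\mu$. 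The upper bound on $\alpha_K$ comes from the other half of the discrepancy principle: for $k < K$ we have $\noicovnorm{\obs - L\Xad_k} > \tau\delta$, and combining with the good-set bound (which controls $\noicovnorm{L\Xad_k - L\xtik_{\alpha_k}} \leq c_{RL}\xnorm{\Xad_k - \xtik_{\alpha_k}} \leq \epsilon\delta$ via \autoref{eq:rl_condition}) and \autoref{eq:det_alpha2} (which gives $\noicovnorm{L\xtik_{\alpha_k} - L x_{\alpha_k} - (\obs - y)} \leq \delta$ hence $\noicovnorm{\obs - Lx_{\alpha_k}} \geq \noicovnorm{\obs-L\xtik_{\alpha_k}} - \delta$... ), one derives $\noicovnorm{y - Lx_{\alpha_{K-1}}} \geq (\tau - \epsilon - 2)\delta$ or similar, forcing $\alpha_{K-1}$ (hence $\alpha_K = b\,\alpha_{K-1}$) to stay bounded away from $0$ at rate $\delta^{2/(2\mu+1)}$ via \autoref{eq:det_alpha4}; this then bounds the last term by $C\rho(\delta/\rho)^{2\mu/(2\mu+1)}\cdot\text{const} = C\delta^{2\mu/(2\mu+1)}$.

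The key structural point — and the main obstacle — is verifying that on $\goodset$ the \emph{random} stopping index $K_\delta(\omega)$ satisfies $K_\delta(\omega) \leq k_\delta$, so that (i) all the good-set estimates $\goodsetk$ for $k \leq k_\delta$ are actually available at the relevant indices, and (ii) the monotonicity $\alpha_{K_\delta} \geq \alpha_{k_\delta}$ can be invoked. This follows because at $k = k_\delta$ the definition of $k_\delta$ gives $\noicovnorm{\obs - L\xtik_{\alpha_{k_\delta}}} \leq (\tau - \epsilon)\delta$, and then by \autoref{eq:rl_condition} and the good-set membership $\omega \in \goodset \subset \goodset^{k_\delta}$,
\begin{align*}
\noicovnorm{\obs - L\Xad_{k_\delta}(\omega)} \leq \noicovnorm{\obs - L\xtik_{\alpha_{k_\delta}}} + c_{RL}\xnorm{\Xad_{k_\delta}(\omega) - \xtik_{\alpha_{k_\delta}}} \leq (\tau - \epsilon)\delta + \epsilon\delta = \tau\delta,
\end{align*}
so the discrepancy criterion \autoref{eq:ekiDiscrepancy} is met at $k_\delta$, whence $K_\delta(\omega) \leq k_\delta$. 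Assembling the three bounds above, each of order $\delta^{2\mu/(2\mu+1)}$ (possibly with $\rho$-dependent constants), and taking $C$ to be their sum yields \autoref{eq:rate_on_good_set}; one should note in passing that all constants are independent of $\omega$ because the good-set bound $c_{RL}^{-1}\epsilon\delta$ and the deterministic Tikhonov estimates are, and that $\delta \leq \bar\delta$ may be assumed so that \autoref{pr:adaptive_eki_error} and \autoref{eq:tikhonov_boundedness} are in force.
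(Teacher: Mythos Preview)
Your overall structure matches the paper's proof: verify $K_\delta(\omega) \leq k_\delta$ on $\goodset$, split $\xnorm{\Xad_K - x^\dagger}$ via $\xtik_{\alpha_K}$ and $x_{\alpha_K}$, control the first piece by the good-set bound and the middle piece by \autoref{eq:det_alpha3} together with a lower bound on $\alpha_K$. The argument that $K_\delta(\omega)\leq k_\delta$ is correct, and using $\alpha_K \geq \alpha_{k_\delta}$ with \autoref{eq:alpha_estimate} for the middle term is a valid shortcut (the paper re-derives this lower bound directly from the discrepancy at $K-1$, but your route works too).

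The gap is in the last term. You bound $\covnorm{x_{\alpha_K}-x^\dagger}\leq C'\rho\,\alpha_K^\mu$ via \autoref{eq:det_alpha4}, and then claim to control $\alpha_K^\mu$ by an ``upper bound on $\alpha_K$''. But the chain you describe --- $\noicovnorm{\obs-L\Xad_{K-1}}>\tau\delta$ combined with the good-set bound and \autoref{eq:det_alpha2}, then \autoref{eq:det_alpha4} --- yields $c_2\rho\,\alpha_{K-1}^{\mu+1/2}\geq (\tau-\epsilon-1)\delta$, i.e.\ a \emph{lower} bound on $\alpha_{K-1}$, the opposite of what you need. In fact no upper bound $\alpha_K=O(\delta^{2/(2\mu+1)})$ is available in general: if the discrepancy happens to be met early, $\alpha_K$ can be of order $\alpha_0$. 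The remedy, which is exactly what the paper does, is to avoid \autoref{eq:det_alpha4} for this term and instead bound the residual $\noicovnorm{Lx_{\alpha_K}-y}$ directly from the \emph{satisfied} discrepancy at step $K$: from $\noicovnorm{\obs-L\Xad_K}\leq\tau\delta$, the good-set bound, and \autoref{eq:det_alpha2} one gets $\noicovnorm{Lx_{\alpha_K}-y}\leq(\tau+\epsilon+1)\delta$, and then \autoref{eq:det_alpha1} gives $\covnorm{x_{\alpha_K}-x^\dagger}\leq \rho^{1/(2\mu+1)}[(\tau+\epsilon+1)\delta]^{2\mu/(2\mu+1)}$. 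With this correction your plan goes through and coincides with the paper's argument.
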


\begin{proof}
Let $\omega \in \goodset$.
\begin{itemize}
\item First, we show that $K_\delta(\omega) \leq k_\delta$: To see this, note that
\begin{align*}
\noicovnorm{\obs - L \Xad_{k_\delta}(\omega)} & \leq \noicovnorm{\obs - L \xtik_{\alpha_{k_\delta}}} + \noicovnorm{L \left( \xtik_{\alpha_{k_\delta}} - \Xad_{k_\delta}(\omega) \right)} \\
& \leq \noicovnorm{\obs - L \xtik_{\alpha_{k_\delta}}} + c_{RL} \covnorm{\xtik_{\alpha_{k_\delta}} - \Xad_{k_\delta}(\omega)}.
\end{align*}
By definition of $k_\delta$ and $\goodset$, this implies
\begin{align*}
\noicovnorm{\obs - L \Xad_{k_\delta}(\omega)} \leq (\tau - \epsilon) \delta + \epsilon \delta = \tau \delta.
\end{align*}
Hence, by definition of $K_\delta$, there must hold $K_\delta(\omega) \leq k_\delta$.
\item Since $K_\delta(\omega) \leq k_\delta$, we have by definition of $\goodset$,
\begin{align}
\xnorm{\Xad_k(\omega) - \xtik_k} \leq c_{RL}^{-1} \epsilon \delta \qquad \text{for all } k \leq \kdo, \label{eq:xnorm_estimate}
\end{align}
and consequently also
\begin{align}
\noicovnorm{L(\Xad_k(\omega) - \xtik_{\alpha_k})} \leq \epsilon \delta \qquad \text{for all } k \leq \kdo. \label{eq:noicovnorm_estimate}
\end{align}
\item Next, we show that there exists a constant $c_4$, independent of $\rho$, $\delta$ and $\omega$, such that
\begin{align}
\alpha_{\kdo}^{-1/2} \leq c_4 \left(\frac{\rho}{\delta}\right)^\frac{1}{2 \mu + 1}. \label{eq:akdo_estimate}
\end{align}
From \autoref{eq:det_alpha4}, we obtain
\begin{align}
\noicovnorm{y - L x_{\alpha_{\kdo - 1}}} &\leq c_2 \rho \alpha_{\kdo - 1}^{\mu + 1/2} \notag \\
& = c_2 \rho (b^{-1}\akdo)^{\mu + 1/2}. \label{eq:less_than_alpha}
\end{align}
On the other hand,
\begin{align*}
\noicovnorm{y - L x_{\alpha_{\kdo - 1}}} \geq \noicovnorm{\obs - L \xtik_{\alpha_{\kdo - 1}}} - \noicovnorm{(\obs - y) - L(\xtik_{\alpha_{\kdo - 1}} - x_{\alpha_{\kdo - 1}})}.
\end{align*}
Inserting \autoref{eq:det_alpha2} yields
\begin{align*}
\noicovnorm{y - L x_{\alpha_{\kdo - 1}}} & \geq \noicovnorm{\obs - L \xtik_{\alpha_{\kdo - 1}}} - \delta \\
& \geq \noicovnorm{\obs - L \Xad_{\kdo - 1}(\omega)} - \noicovnorm{L (\Xad_{\kdo - 1}(\omega) - \xtik_{\alpha_{\kdo - 1}})} - \delta.
\end{align*}
By the definition of $K_\delta$ and \autoref{eq:noicovnorm_estimate}, this reduces to
\begin{align}
\noicovnorm{y - L x_{\alpha_{\kdo - 1}}} \geq \tau \delta - \epsilon \delta - \delta = (\tau - \epsilon - 1) \delta. \label{eq:greater_than_delta}
\end{align}
Combining \autoref{eq:less_than_alpha} and \autoref{eq:greater_than_delta} yields
\begin{align*}
(\tau - \epsilon - 1) \delta \leq c_2 \rho (b^{-1}\akdo)^{\mu + 1/2}.
\end{align*}
Since $\tau - \epsilon - 1 > 0$, we can rearrange this inequality to
\begin{align*}
\akdo^{-1/2} \leq  b^{-1/2} \left(\frac{c_2}{\tau - \epsilon - 1} \right)^\frac{1}{2 \mu + 1}\left(\frac{\rho}{\delta}\right)^\frac{1}{2 \mu + 1},
\end{align*}
which shows \autoref{eq:akdo_estimate} for suitable choice of $c_4$.
\item Next, we show that there exists a constant $c_5$, independent of $\omega$ and $\delta$, such that
\begin{align}
\covnorm{\xtik_{\akdo} - x^\dagger} \leq c_5 \delta^\frac{2 \mu}{2 \mu + 1}. \label{eq:random_discrepancy_optimal}
\end{align}
We start with the triangle inequality
\begin{align}
\covnorm{\xtik_{\akdo} - x^\dagger} \leq \covnorm{\xtik_{\akdo} - x_{\akdo}} + \covnorm{x_{\akdo} - x^\dagger}. \label{eq:tikhonov_triangle}
\end{align}
By \autoref{eq:det_alpha3}, the first term on the right-hand side satisfies
\begin{align*}
\covnorm{\xtik_{\akdo} - x_{\akdo}} \leq c_1 \delta \akdo^{-1/2}.
\end{align*}
Inserting \autoref{eq:akdo_estimate} yields
\begin{align}
\covnorm{\xtik_{\akdo} - x_{\akdo}} \leq c_1 c_4 \rho^\frac{1}{2 \mu + 1} \delta^\frac{2 \mu}{2 \mu + 1}. \label{eq:noisy_vs_exact_tikhonov}
\end{align}
For the second term on the right-hand side of \autoref{eq:tikhonov_triangle}, we have by \autoref{eq:det_alpha1}:
\begin{align}
\covnorm{x_{\akdo} - x^\dagger} \leq \rho^\frac{1}{2 \mu + 1} \noicovnorm{L x_{\akdo} - y}^\frac{2 \mu}{2 \mu + 1}. \label{eq:exact_tikhonov_error1}
\end{align}
We then estimate, using \autoref{eq:det_alpha2},
\begin{align*}
\noicovnorm{L x_{\akdo} - y} & \leq \noicovnorm{\obs - L \xtik_{\akdo}} + \noicovnorm{ (y - \obs) - L \left( x_{\akdo} - \xtik_{\akdo} \right)} \\
& \leq \noicovnorm{\obs - L \xtik_{\akdo}} + \delta.
\end{align*}
From this, another use of the triangle inequality yields
\begin{align*}
\noicovnorm{L x_{\akdo} - y} \leq \noicovnorm{\obs - L \Xad_{\kdo}(\omega)} + \noicovnorm{L \left(\Xad_{\kdo} - \xtik_{\akdo} \right)} + \delta.
\end{align*}
Finally, using the definition of $K_\delta$ and \autoref{eq:noicovnorm_estimate} yields
\begin{align}
\noicovnorm{L x_{\akdo} - y} \leq (\tau + \epsilon + 1) \delta. \label{eq:exact_noicov_estimate}
\end{align}
Inserting \autoref{eq:exact_noicov_estimate} in \autoref{eq:exact_tikhonov_error1} yields
\begin{align}
\covnorm{x_{\akdo} - x^\dagger} \leq (\tau + \epsilon + 1) \rho^\frac{1}{2 \mu + 1} \delta^\frac{2 \mu}{2 \mu + 1}. \label{eq:exact_tikhonov_error2}
\end{align}
Finally, inserting both \autoref{eq:noisy_vs_exact_tikhonov} and \autoref{eq:exact_tikhonov_error2} in \autoref{eq:tikhonov_triangle} yields \autoref{eq:random_discrepancy_optimal} for sutable choice of $c_5$.
\item From the triangle inequality and \autoref{eq:c_norm_stronger}, we have
\begin{align*}
\xnorm{\Xad_{\kdo}(\omega) - x^\dagger} & \leq \xnorm{\Xad_{\kdo} - \xtik_{\akdo}} + \xnorm{\xtik_{\akdo} - x^\dagger} \\
& \leq \xnorm{\Xad_{\kdo} - \xtik_{\akdo}} + \xopnorm{\cov^{1/2}}\covnorm{\xtik_{\akdo} - x^\dagger}.
\end{align*}
We can use \autoref{eq:xnorm_estimate} to estimate the first and \autoref{eq:random_discrepancy_optimal} to estimate the second term of the right-hand side, which yields
\begin{align*}
\xnorm{\Xad_{\kdo}(\omega) - x^\dagger} = c_{RL}^{-1} \epsilon \delta + \xopnorm{\cov^{1/2}} \cdot c_5 \delta^\frac{2 \mu}{2 \mu + 1}.
\end{align*}
Hence, we can choose $C > 0$, independently of $\delta$ and $\omega$, such that \autoref{eq:rate_on_good_set} holds.
\end{itemize}
\end{proof}

With this, we arrive at convergence rates for adaptive EKI under a stochastic low-rank approximation.

\begin{theorem}\label{adaptive_eki_convergence_rate}
Given \autoref{assumption}, \autoref{source_condition} and \autoref{parameter_assumptions}, there holds
\begin{align}
\Lpxnorm{\Xad_{K_\delta} - x^\dagger}{q} = O( \delta^\frac{2 \mu}{2 \mu + 1}). \label{eq:adaptive_eki_convergence_rate}
\end{align}
\end{theorem}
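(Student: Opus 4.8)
The plan is to read the theorem straight off the splitting already recorded in \autoref{eq:splitting_strategy}, feeding in the two estimates that were prepared for exactly this purpose: the pointwise bound on the good set from \autoref{convergence_on_good_set} and the tail bound on its complement from \autoref{complement_estimate}. Recall that \autoref{eq:splitting_strategy} states
\begin{align*}
\Exp{\xnorm{\Xad_{K_\delta} - x^\dagger}^q} \leq \Exp{\xnorm{\Xad_{K_\delta} - x^\dagger}^q | \goodset} + (r + \xnorm{x^\dagger})^q \PP(\goodset^\complement),
\end{align*}
so it is enough to bound each of the two summands on the right by a constant multiple of $\delta^{\frac{2 \mu q}{2 \mu + 1}}$ and then take $q$-th roots.

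First I would dispose of the conditional expectation. On $\goodset$ the stopping index satisfies $K_\delta(\omega) \leq k_\delta < \infty$ (as established at the start of the proof of \autoref{convergence_on_good_set}), so $\Xad_{K_\delta}$ is well-defined there, and \autoref{convergence_on_good_set} gives $\xnorm{\Xad_{K_\delta(\omega)}(\omega) - x^\dagger} \leq C \delta^{\frac{2 \mu}{2 \mu + 1}}$ with $C$ independent of $\omega$ and $\delta$. Raising to the $q$-th power and taking the conditional expectation yields $\Exp{\xnorm{\Xad_{K_\delta} - x^\dagger}^q | \goodset} \leq C^q \delta^{\frac{2 \mu q}{2 \mu + 1}}$. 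For the second summand I would simply insert the bound $\PP(\goodset^\complement) = O(\delta^{\frac{2 \mu q}{2 \mu + 1}})$ from \autoref{complement_estimate}; since $r$ and $\xnorm{x^\dagger}$ are fixed constants, the whole term $(r + \xnorm{x^\dagger})^q \PP(\goodset^\complement)$ inherits this rate. Adding the two contributions gives $\Exp{\xnorm{\Xad_{K_\delta} - x^\dagger}^q} = O(\delta^{\frac{2 \mu q}{2 \mu + 1}})$, and taking the $q$-th root gives \autoref{eq:adaptive_eki_convergence_rate}.

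I do not expect any real obstacle in this final step: all the substance lives in the preparatory results, namely the good-set construction, the deterministic convergence-rate argument run on each realization in \autoref{convergence_on_good_set}, and the probability estimate of \autoref{complement_estimate} (which in turn leans on the coupling \autoref{eq:initial_size_stochastic} between sample size and regularization parameter from \autoref{parameter_assumptions}, on the boundedness of the Tikhonov iterates from \autoref{lem:deterministic_stopping}, and on Markov's inequality). The one thing to verify is that the exponents line up, i.e. that the complement probability decays at the same rate as the $q$-th moment on the good set; this holds because the exponent $1 + q/p$ appearing in \autoref{eq:initial_size_stochastic} was chosen precisely so that \autoref{complement_estimate} produces exactly $\delta^{\frac{2 \mu q}{2 \mu + 1}}$.
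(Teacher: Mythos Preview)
Your proposal is correct and follows essentially the same route as the paper: invoke \autoref{convergence_on_good_set} to bound the conditional expectation on $\goodset$, plug \autoref{complement_estimate} into the second term of \autoref{eq:splitting_strategy}, and take the $q$-th root. The additional remarks you make about finiteness of $K_\delta$ on $\goodset$ and about the exponent matching are correct elaborations that the paper leaves implicit.
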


\begin{proof}
By \autoref{convergence_on_good_set}, there holds
\begin{align*}
\xnorm{\Xad_{\kdo}(\omega) - x^\dagger}^q \leq C^q \delta^\frac{2 \mu q}{2 \mu + 1} \qquad \text{for all } \omega \in \goodset.
\end{align*}
This implies in particular
\begin{align*}
\Exp{\xnorm{\Xad_{K_\delta} - x^\dagger}^q | \goodset} \leq  C^q \delta^\frac{2 \mu q}{2 \mu + 1}.
\end{align*}
Using this inequality and \autoref{complement_estimate} in \autoref{eq:splitting_strategy} yields
\begin{align*}
\Exp{\xnorm{\Xad_{K_\delta} - x^\dagger}^q} = O(\delta^\frac{2 \mu q}{2 \mu + 1}),
\end{align*}
from which \autoref{eq:adaptive_eki_convergence_rate} follows.
\end{proof}

For completeness, we also formulate the convergence rate results under a deterministic low-rank approximation. In this case, the proof of \autoref{convergence_on_good_set} applies without change, and we obtain the following result.

\begin{theorem}\label{adaptive_deterministic_eki_convergence_rate}
Let \autoref{assumption} and \autoref{source_condition} hold, and let $(\Aj)_{J=1}^\infty$ generates a deterministic low-rank approximation of $\cov$, of order $\gamma$. Assume there is $\epsilon \in (0, \tau - 1)$ such that
\begin{align}
\phi(\alpha_0)^{-1} J_0^\gamma \geq \frac{c_{RL} \kappa}{\epsilon \delta}, \label{eq:adapted_size_deterministic}
\end{align}
where $\kappa$ is as in \autoref{large_ensemble_convergence}. Then
\begin{align*}
\xnorm{\Xaddet_{K_\delta} - x^\dagger} = O(\delta^\frac{2 \mu}{2 \mu + 1}).
\end{align*}
\end{theorem}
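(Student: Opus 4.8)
The plan is to exploit the fact that a \emph{deterministic} low-rank approximation of order $\gamma$ collapses the probabilistic ``good set'' machinery of the previous theorem: the good set becomes all of $\Omega$, and the conclusion then follows by running the deterministic core of the proof of \autoref{convergence_on_good_set} verbatim, with the sample point $\omega$ simply dropped throughout.

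First I would establish a uniform approximation bound. Since a deterministic adaptive iterate carries no projection, $\Xaddet_k = \Xhvar{J_k}_{\alpha_k}$ by \eqref{eq:adaptive_deterministic_EKI}, so \autoref{large_ensemble_convergence}(i) gives $\xnorm{\Xaddet_k - \xtik_{\alpha_k}} \le \kappa\,\alpha_k^{-1} J_k^{-\gamma}$ for every $k$. The choices $\alpha_k = b^k\alpha_0$ and $J_k = \lceil b^{-k/\gamma}J_0\rceil \ge b^{-k/\gamma}J_0$ yield $\alpha_k J_k^\gamma \ge \alpha_0 J_0^\gamma$, so hypothesis \eqref{eq:adapted_size_deterministic} propagates to all $k$ and gives $\xnorm{\Xaddet_k - \xtik_{\alpha_k}} \le c_{RL}^{-1}\epsilon\delta$ for all $k \in \N$. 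This is precisely the defining inequality of the set $\goodsetk$ from the stochastic analysis, now holding deterministically and simultaneously for all $k$; in other words $\goodset = \Omega$.

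Next I would transplant the proof of \autoref{convergence_on_good_set}, whose only inputs are: (i) the uniform bound just derived; (ii) \autoref{lem:deterministic_stopping}, whose parts on $\alpha_{k_\delta}$ and on $k_\delta$ involve Tikhonov regularization alone and are insensitive to the nature of the low-rank approximation (part (iii), and hence the radius condition \eqref{eq:r_large_enough}, is not needed here because there is no projection); and (iii) \autoref{det_alpha}, together with the elementary estimate $\noicovnorm{Lx} \le c_{RL}\xnorm{x}$ coming from \eqref{eq:rl_condition}. Concretely: combining the uniform bound with the definition of $k_\delta$ gives $\noicovnorm{\obs - L\Xaddet_{k_\delta}} \le (\tau-\epsilon)\delta + \epsilon\delta = \tau\delta$, so the discrepancy principle stops with $K_\delta \le k_\delta < \infty$; the argument of \autoref{convergence_on_good_set} based on \eqref{eq:det_alpha2}, \eqref{eq:det_alpha4} and the definition of $K_\delta$ then yields $\alpha_{K_\delta}^{-1/2} \le c_4(\rho/\delta)^{1/(2\mu+1)}$; and the triangle inequality $\covnorm{\xtik_{\alpha_{K_\delta}} - \xmns} \le \covnorm{\xtik_{\alpha_{K_\delta}} - x_{\alpha_{K_\delta}}} + \covnorm{x_{\alpha_{K_\delta}} - \xmns}$ with \eqref{eq:det_alpha1}, \eqref{eq:det_alpha3} and this $\alpha_{K_\delta}$-bound gives $\covnorm{\xtik_{\alpha_{K_\delta}} - \xmns} \le c_5\,\delta^{2\mu/(2\mu+1)}$. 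Finally, \eqref{eq:c_norm_stronger} and the uniform bound give $\xnorm{\Xaddet_{K_\delta} - \xmns} \le \xnorm{\Xaddet_{K_\delta} - \xtik_{\alpha_{K_\delta}}} + \xopnorm{\cov^{1/2}}\covnorm{\xtik_{\alpha_{K_\delta}} - \xmns} \le c_{RL}^{-1}\epsilon\delta + \xopnorm{\cov^{1/2}}\, c_5\,\delta^{2\mu/(2\mu+1)}$, and since $\mu \le 1/2$ forces $2\mu/(2\mu+1) \le 1/2 < 1$, the right-hand side is $O(\delta^{2\mu/(2\mu+1)})$ as $\delta \to 0$.

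I do not expect a genuinely hard step; the point to get right is conceptual rather than computational, namely that in the deterministic case one needs neither the law-of-total-expectation splitting \eqref{eq:splitting_strategy}, nor the complement estimate of \autoref{complement_estimate}, nor the projection-based \autoref{pr:adaptive_eki_error}, because \autoref{large_ensemble_convergence}(i) already controls $\xnorm{\Xaddet_k - \xtik_{\alpha_k}}$ deterministically and uniformly in $k$. The only real ``obstacle'' is the bookkeeping check that every appeal to ``$\omega \in \goodset$'' inside the proof of \autoref{convergence_on_good_set} may indeed be replaced by the unconditional bound $\xnorm{\Xaddet_k - \xtik_{\alpha_k}} \le c_{RL}^{-1}\epsilon\delta$ established in the first step.
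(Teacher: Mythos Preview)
Your proposal is correct and matches the paper's approach exactly: the paper simply states that ``the proof of \autoref{convergence_on_good_set} applies without change'' in the deterministic case, and your write-up fills in precisely those details, observing that the deterministic bound from \autoref{large_ensemble_convergence}(i) together with \eqref{eq:adapted_size_deterministic} makes the good-set condition hold unconditionally, so neither the projection nor the probabilistic splitting is needed. If anything, your account is more explicit than the paper's one-line proof.
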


\begin{remark}\label{re:deterministic}
Comparing the condition \autoref{eq:adapted_size_deterministic} for the deterministic case to the condition \autoref{eq:adapted_size_stochastic} for the stochastic case, we see that the major difference is that the stochastic case requires an additional multiplicative factor $\delta^{-\frac{q}{p}}$. This additional factor is used in the proof of \autoref{complement_estimate} to ensure that $\PP((\goodset^\complement)) = O(\delta^\frac{2 \mu q}{2 \mu + 1})$. Formally, we recover the deterministic case from the stochastic case in the limit $p \to \infty$ (where $p=\infty$ corresponds to almost sure convergence).
\end{remark}

\begin{remark}\label{re:optimal}
The proven convergence rate is optimal for $\mu \in (0,\frac{1}{2})$, in the sense that if only \autoref{source_condition} is known, there exists no regularization method that satisfies a better general bound with respect to $\delta$ and $\mu$ \cite[proposition 3.15]{EngHanNeu96}.
\end{remark}

Continuing our discussion from \autoref{subsec:sampling}, we see from \autoref{adaptive_eki_convergence_rate} and \autoref{adaptive_deterministic_eki_convergence_rate} that the three special cases of adaptive EKI defined in \autoref{de:adaptive_eki}, namely adaptive Standard-, SVD- and Nyström-EKI, are all of (stochastic) optimal order. However, the faster convergence of the SVD- and Nyström-based low-rank approximation means that the sample size $J_k$ does not have to grow as fast as for Standard-EKI, which makes those two methods computationally cheaper.

\begin{corollary}\label{rates_special_cases}
Let \autoref{assumption} and \autoref{source_condition} hold.
\begin{enumerate}
\item Let $p \in [1,\infty)$, $\cov$ be in the trace class, and suppose that \autoref{parameter_assumptions} is satisfied for $\gamma=1/2$. Then there holds
\begin{align*}
\Lpxnorm{\Xaeki_{K_\delta} - \xmns}{p} = O( \delta^\frac{2 \mu}{2 \mu + 1}).
\end{align*}
\item Assume that $\cov$ satisfies \autoref{eigen_decay} with constant $\eta>0$, and suppose that \autoref{eq:adapted_size_deterministic} is satisfied for $\gamma = \eta$. Then there holds
\begin{align*}
\xnorm{\Xasvd_{K_\delta} - \xmns} = O( \delta^\frac{2 \mu}{2 \mu + 1}).
\end{align*}
\item Let $p \in [1,\infty)$, assume that $\cov$ satisfies \autoref{eigen_decay} with constant $\eta>1/2$, and suppose tat \autoref{parameter_assumptions} is satisfied for $\gamma=\eta$. Then there holds
\begin{align*}
\Exp {\xnorm{\Xanys_{K_\delta} - \xmns} } = O(\delta^\frac{2 \mu}{2 \mu + 1}).
\end{align*}
\end{enumerate}
\end{corollary}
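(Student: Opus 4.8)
The plan is to obtain each of the three assertions as a direct specialization of the abstract convergence rate results \autoref{adaptive_eki_convergence_rate} and \autoref{adaptive_deterministic_eki_convergence_rate}, feeding in the concrete low-rank approximation rates collected in \autoref{subsec:sampling}. In every case the argument amounts to three bookkeeping steps: decide whether the underlying family $(\Aj)_{J=1}^\infty$ is deterministic or stochastic (hence which abstract theorem applies), read off its order $\gamma$, and observe that the size condition on $\alpha_0 J_0^\gamma$ required by the abstract theorem is precisely the hypothesis we have assumed.

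For part \textit{(i)} the family $\Aj = \Anom(\ens)$ is stochastic, so \autoref{adaptive_eki_convergence_rate} is the relevant statement. Since $\cov$ is trace-class, \autoref{sample_covariance} tells us that $(\Anom(\ens))_{J=1}^\infty$ generates a stochastic low-rank approximation of $p$-order $\gamma = 1/2$; together with the assumed \autoref{parameter_assumptions} (for this $\gamma$), all hypotheses of \autoref{adaptive_eki_convergence_rate} are met, and applying it with $q = p$ yields the claimed $L^p$-rate for $\Xaeki_{K_\delta}$.

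For part \textit{(ii)} the family $\Aj = \Ant$ is deterministic, so we invoke \autoref{adaptive_deterministic_eki_convergence_rate}. By the Schmidt--Eckhardt--Young--Mirsky theorem (\autoref{schmidt}) and \autoref{eigen_decay} we have $\xopnorm{\Ant \Ant^* - \cov} = \lambda_{J+1} = O(J^{-\eta})$, so $(\Ant)_{J=1}^\infty$ is a deterministic low-rank approximation of order $\gamma = \eta$; with the assumed condition \autoref{eq:adapted_size_deterministic} for $\gamma = \eta$, \autoref{adaptive_deterministic_eki_convergence_rate} gives the rate for $\Xasvd_{K_\delta}$. For part \textit{(iii)} the family $\Aj = \Ann$ is stochastic; by \autoref{nyström} --- and the remark following it, which extends the bound to $p$-order $\eta$ for every $p \in [1,\infty)$ --- $(\Ann)_{J=1}^\infty$ generates a stochastic low-rank approximation of $p$-order $\gamma = \eta$ as soon as \autoref{eigen_decay} holds with $\eta > 1/2$, so again \autoref{parameter_assumptions} is available for this $\gamma$ and \autoref{adaptive_eki_convergence_rate} with $q = 1$ delivers the asserted rate for $\Xanys_{K_\delta}$.

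I do not anticipate a genuine obstacle here: the corollary is pure assembly of already-proven facts. The only point requiring care is matching up the exponents --- aligning the $L^q$-exponent appearing in each assertion ($p$ in \textit{(i)}, the implicit $q=1$ in \textit{(iii)}) with the parameter $q$ of \autoref{adaptive_eki_convergence_rate}, and making sure the order $\gamma$ for which \autoref{parameter_assumptions} (respectively \autoref{eq:adapted_size_deterministic}) is assumed is exactly $1/2$ for the ensemble anomaly and $\eta$ for the SVD and Nyström approximations.
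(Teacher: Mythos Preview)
Your proposal is correct and follows essentially the same approach as the paper's proof: each part is obtained by specializing \autoref{adaptive_eki_convergence_rate} or \autoref{adaptive_deterministic_eki_convergence_rate} after invoking \autoref{sample_covariance}, \autoref{schmidt}, or \autoref{nyström} (with its subsequent remark) to verify the required order of the low-rank approximation. Your additional remarks on matching the exponent $q$ in \autoref{adaptive_eki_convergence_rate} to the target $L^p$- or $L^1$-norm are useful bookkeeping that the paper leaves implicit.
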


\begin{proof}
Recall that $(\Xaeki_k)_{k=1}^\infty$ is a special case of adaptive EKI where the low-rank approximation is generated by $(\Anom(\ens))_{J=1}^\infty$ (see \autoref{sample_covariance}). Thus, if $\cov$ is in the trace-class, \autoref{adaptive_eki_convergence_rate} applies with $\gamma=1/2$ and yields the desired convergence rate. The corresponding result for $(\Xasvd_k)_{k=1}^\infty$ follows analogously from \autoref{adaptive_deterministic_eki_convergence_rate} and \autoref{schmidt}, while the result for $(\Xanys_k)_{k=1}^\infty$ follows from \autoref{adaptive_eki_convergence_rate} and \autoref{nyström}.
\end{proof}

As an example, suppose we know that $\cov$ is in the trace class, i.e. $\eta \geq 1$. Then \autoref{rates_special_cases} implies that Standard-EKI is of optimal order if $J_k \geq b^{-2 k} J_0$, whereas Nyström-EKI is of optimal order if $J_k \geq b^{- k} J_0$ (see \autoref{eq:j_k}). This means that Nyström-EKI performs comparably with only a square-root of the sample size. Furthermore, if the eigenvalues of $\cov$ decay faster than $O(n^{-1})$, Nyström-EKI can take advantage of this, whereas Standard-EKI is limited by the lower bound \autoref{eq:lower_bound}.

\subsection{General remarks}\label{sec:general_remarks}

\subsubsection*{Relation to other versions of EKI}

Note that our focus differs from the strictly Bayesian setting in which ensemble Kalman inversion is often introduced. In the Bayesian setting, it is assumed that the regularization parameter represents the available prior information, and the regularized solution is identified with the MAP estimate. In regularization theory, we are interested in showing convergence rates in the zero-noise limit, which requires the use of parameter choice rules that select the regularization parameter $\alpha$ in terms of the noise level and properties of the forward operator $L$. Above, we have focused on the discrepancy principle. In contrast to a-priori choice rules, the use of the discrepancy principle has the advantage that it requires only little prior information on the operator $L$. However, its use is contingent on performing multiple steps of EKI with decreasing values of the regularization parameter. This strategy has a lot of similarities to the empirical Bayesian approach, where we assume a Gaussian prior but treat the regularization parameter as unknown and try to estimate it from the data (see e.g. \cite{VidPer18}). Our analysis shows that, by coupling the sample size to the regularization parameter, it becomes possible to obtain the optimal convergence rates in the zero-noise limit. This is also the major difference of the presented scheme to other versions of EKI.

\subsubsection*{Relation to multiscale methods}

The ideas behind adaptive EKI are similar to sequential multiscale methods, where one iteratively moves from a low-dimensional coarse-scale subspace to finer scales. A related work along these lines is \cite{NadPotRho18}, which also applies to ensemble methods, but considers the setting where in each step an approximate solution on a different subspace is computed. Under certain conditions on the multiscale decomposition, this approach can be shown to be equivalent to Tikhonov regularization in the full space. In contrast, the idea behind adaptive EKI is only to approximate Tikhonov regularization, in a way that achieves the same convergence order in the zero-noise limit.

\subsubsection*{Localization}

In some practical applications (e.g. numerical weather prediction \cite{HouZha16}) it is only feasible to work with ensemble sizes that are orders of magnitude smaller than the parameter dimension. In these situations, localization \cite{GreKalMiyIdeHun11} is often used to increase the effective ensemble size through incorporation of domain knowledge on the correlation structure of the parameter or observation of interest. Since adaptive EKI can be formulated both in square-root and covariance form (see \autoref{rem:covariance_form}), it can be combined with most of the existing localization methods, such as covariance localization \cite{HouMit01} or local analysis \cite{OttHunSzuZimKos04}. Note that localization for stochastic EKI has been studied in \cite{TonMor22_report}.

\section{Numerical experiments}\label{sec:numerics}

We performed numerical experiments to evaluate the performance of adaptive EKI.

\subsection*{Test problem}

We have chosen inversion of the Radon transform $L$ (see for instance \cite{Kuc13}) as our test example. The analytical results show that the large ensemble limit approximates the Tikhonov regularized solution, which we aim to verify numerically. And we also compare the different variants of EKI in terms of efficiency. As a test object, we use the classic Shepp-Logan phantom \cite{SheLog74} with size $d \times d$, $d=100$, (see \autoref{fig:shepp_logan}). This corresponds to a parameter dimension of $n := \dim \XX = d^2 = 10000$ and a measurement dimension of $m := \dim \YY = 14200$. 

\begin{figure}[H]
\begin{center}
\includegraphics[width=0.25\textwidth]{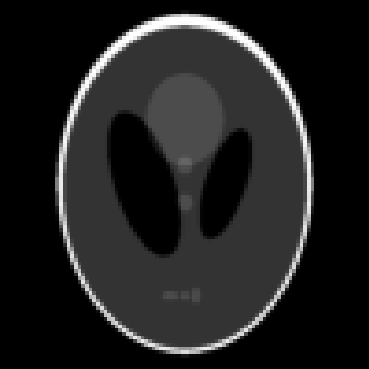}%
\captionof{figure}{The Shepp-Logan phantom}\label{fig:shepp_logan}%
\end{center}
\end{figure}

\subsection*{Data simulation}
We generated noise $\xi_s \sim \normal(0, \Id_m)$ from a standard normal distribution and then rescaled the noise by setting
\begin{equation*}
\xi := \frac{\norm{y}}{10 \norm{\xi_s}} \xi_s,
\end{equation*}
thereby ensuring a signal-to-noise ratio of exactly 10. We then used $\hat y = y + \xi$ as noisy measurement for the tested methods. We also rescaled the measurement and the observation operator by $\norm{\xi}$ so that $\delta = \norm{\obs - y} = 1$. 

\subsection*{Considered methods}
In our experiment, we set $\noicov = \Idmat_m$, and chose $\cov \in \R^{n \times n}$ equal to a discretized covariance operator of an Ornstein-Uhlenbeck process,
\begin{align*}
(\cov)_{ij} & := e^{-\norm{q_i - q_j}/h^2}, \\
\text{where}\quad q_i & := \begin{pmatrix}
\frac{i ~\mathrm{mod}~ d}{d-1}\\ \frac{\lfloor i / d \rfloor}{d-1}
\end{pmatrix} \in [0,1] \times [0,1],
\end{align*}
with correlation length $h > 0$ (we used the value $h = 0.01$). Such operators are often used as prior covariance for Bayesian MAP estimation in tomography, for example in \cite{Tar19}.
They correspond to the assumption that the correlation between individual pixels decreases exponentially with distance, where $q_i$ denotes the normalized position of the $i$-th pixel if the image is scaled to $[0,1]\times[0,1]$.
We compared the 3 different instances of adaptive EKI discussed in \autoref{sec:rates}:
\begin{itemize}
\item Standard EKI with $\alpha_k = b^k \alpha_0$, $b = \sqrt[4]{0.8}$, $J_k = \lceil b^{-4(k-1)} J_1 \rceil$, $\alpha_0=0.15$, and $J_1 = 50$.
\item Nyström-EKI with $\alpha_k = b^k \alpha_0$, $b = \sqrt{0.8}$, $J_k = \lceil b^{-2(k-1)} J_1 \rceil$, $\alpha_0=0.15$, and $J_1 = 50$.
\item SVD-EKI with $\alpha_k = b^k \alpha_0$, $b = \sqrt{0.8}$, $J_k = \lceil b^{-2(k-1)} J_1 \rceil$,$\alpha_0=0.15$, and $J_1 = 50$.
\end{itemize}
The different values of $b$ are used in order to ensure that the sequence of sample sizes $(J_k)_{k=1}^\infty$ is equal for all three methods.
Moreover, all methods used the discrepancy principle (see \autoref{de:discrepancy}) with $\tau = 1.2$. In any case, the iterations where aborted once $J_k$ was larger than $n$, since at this point the computational complexity of EKI is higher than of Tikhonov regularization.

\subsection*{Implementation}
The algorithms were implemented in Python and use efficient Numpy \cite{HarMilWalGomVir20} and SciPy \cite{VirGomOliHabRed20} routines. We used the existing implementation of the Radon transform in the scikit-image library \cite{WalSchoNunBouWar14}, and took advantage of the Ray framework \cite{MorNisWanTumLia18} to parallelize the operator evaluations. The computations were performed on a Dell XPS-15-7590 Laptop with 12 2.60 GHz CPUs and 15.3 GiB RAM.

\subsection*{Convergence of adaptive EKI}
For each iteration, we evaluated the relative reconstruction error
\begin{align*}
e_\mathrm{rel}(x) := \frac{\norm{x - \xtrue}}{\norm{\xtrue}}.
\end{align*}
The results are visualized in \autoref{fig:speedplot}. Note that every iteration is computationally more expensive than the previous one since the sample size $J_k$ increases steadily. While the Nyström-EKI and the SVD-EKI methods were able to satisfy the discrepancy principle after 18 and 16 iterations, respectively (with sample size $J_{18} = 2284$ and $J_{16}=1461$), the Standard EKI-iteration was not able to satisfy the discrepancy principle for a sample size less than $n$. Apart from that, one clearly sees that Nyström-EKI and SVD-EKI both significantly outperform Standard-EKI. Consistent with \autoref{schmidt}, one may observe that SVD-EKI yields the most accurate reconstruction for given sample size.

\begin{center}
\includegraphics[width=0.6\textwidth]{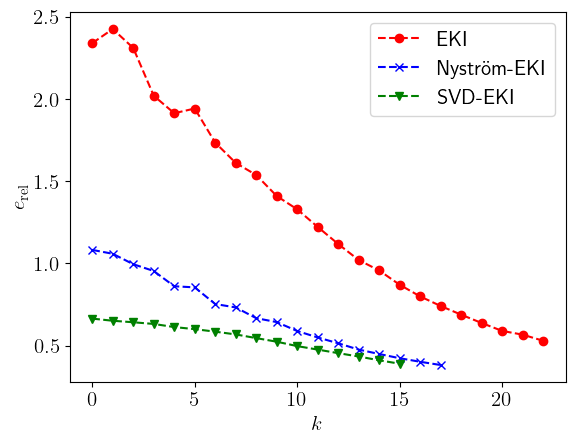}%
\captionof{figure}{The adaptive EKI, Nyström-EKI and SVD-EKI iterations. The $x$-axis denotes the iteration number. The $y$-axis denotes the relative reconstruction error $e_\mathrm{rel}$. The Standard-EKI iteration was not able to satisfy the discrepancy principle for $J_k < n$.}\label{fig:speedplot}%
\end{center}

\subsection*{Comparison of Standard-EKI with Nyström-EKI}
In \autoref{fig:standard_vs_nys}, we visually compare the reconstruction with Standard-EKI to the reconstruction with Nyström-EKI. Both reconstructions use the same value of $\alpha$ and sample size $J=2000$. One can see that the standard method is much more noisy than the Nyström method. This noise does not come from the noisy measurement, it is introduced by the sampling process.

\begin{figure}[!h]
\begin{center}
\begin{subfigure}[t]{0.2\textwidth}
	\centering
		\includegraphics[height=0.8\textwidth]{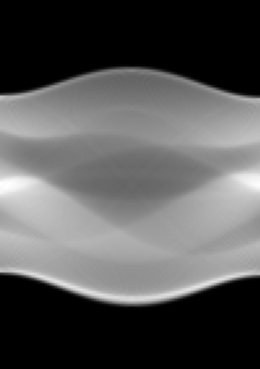}
		\caption{\footnotesize The measurement $y$.}
	\end{subfigure}\hspace{0.05\textwidth}%
	\begin{subfigure}[t]{0.2\textwidth}
	\centering
		\includegraphics[height=0.8\textwidth]{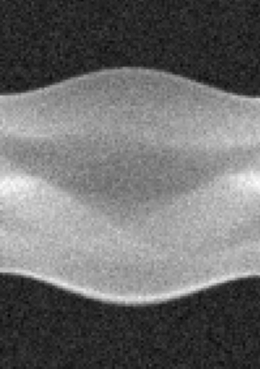}
		\caption{\footnotesize The noisy data $\hat y$.}
	\end{subfigure}\hspace{0.05\textwidth}%
	\begin{subfigure}[t]{0.2\textwidth}
	\centering
		\includegraphics[height=0.8\textwidth]{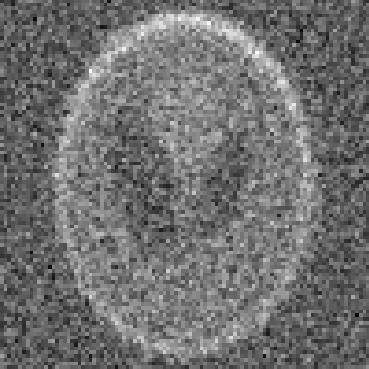}
		\caption{\footnotesize\centering Reconstruction from noisy data with Standard-EKI.}
	\end{subfigure}\hspace{0.05\textwidth}%
	\begin{subfigure}[t]{0.2\textwidth}
	\centering
		\includegraphics[height=0.8\textwidth]{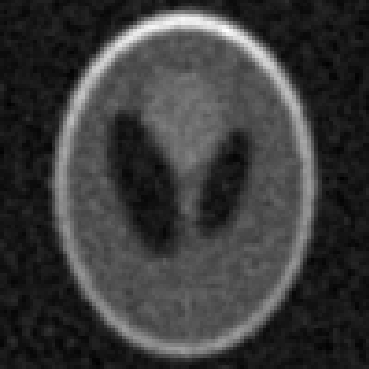}
		\caption{\footnotesize\centering Reconstruction from noisy data with Nyström-EKI.}
	\end{subfigure}
	\caption{Reconstruction of the Shepp-Logan phantom from noisy data using EKI with sample size $J=2000$, corresponding to $1/5$ of the parameter dimension.}\label{fig:standard_vs_nys}
\end{center}
\end{figure}

\subsection*{Convergence to Tikhonov regularization for large sample sizes}

In \autoref{fig:nys_to_tik}, we have plotted the reconstruction with Nyström-EKI for increasing values of $J$. For $J = 500$, the reconstruction is hardly useful. However, for $J=2000$ the reconstruction is already almost comparable to the Tikhonov reconstruction, although a little bit blurred. For higher values of $J$, the improvement is only marginal. This shows that the presence of noise allows considerable a-priori (that is, not using knowledge on $L$ or $\obs$) dimensionality reduction.

\begin{figure}[!h]
\begin{center}
	\begin{subfigure}[t]{0.2\textwidth}
	\centering
		\includegraphics[width=0.8\textwidth]{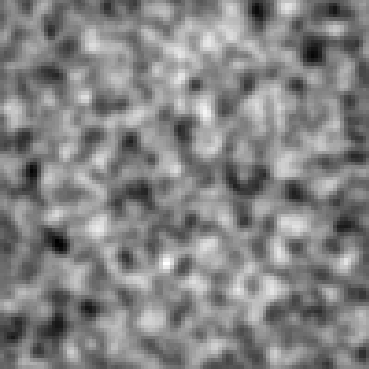}
		\caption{\footnotesize $J=100$.}
	\end{subfigure}%
	\begin{subfigure}[t]{0.2\textwidth}
	\centering
		\includegraphics[width=0.8\textwidth]{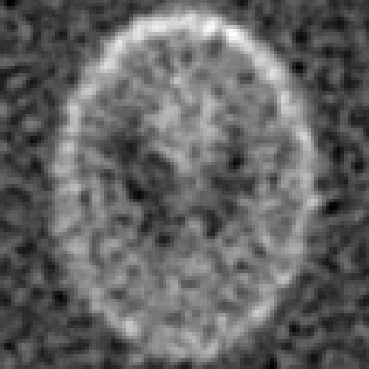}
		\caption{\footnotesize $J=500$.}
	\end{subfigure}%
	\begin{subfigure}[t]{0.2\textwidth}
	\centering
		\includegraphics[width=0.8\textwidth]{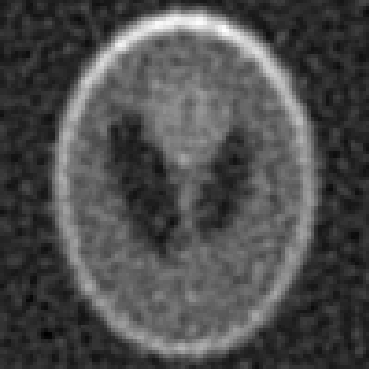}
		\caption{\footnotesize $J=1000$.}
	\end{subfigure}%
	\begin{subfigure}[t]{0.2\textwidth}
	\centering
		\includegraphics[width=0.8\textwidth]{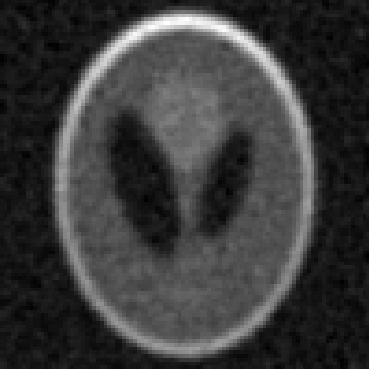}
		\caption{\footnotesize $J=2000$.}
	\end{subfigure}
	
	\begin{subfigure}[t]{0.2\textwidth}
	\centering
		\includegraphics[width=0.8\textwidth]{figure4_nys_2000.png}
		\caption{\footnotesize $J=3000$.}
	\end{subfigure}%
	\begin{subfigure}[t]{0.2\textwidth}
	\centering
		\includegraphics[width=0.8\textwidth]{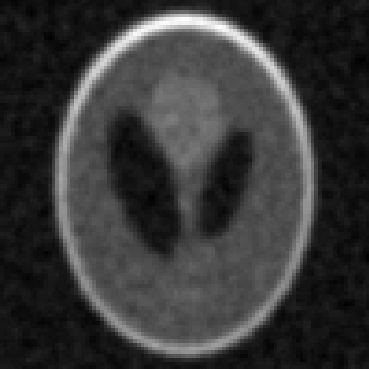}
		\caption{\footnotesize $J=5000$.}
	\end{subfigure}%
	\begin{subfigure}[t]{0.2\textwidth}
	\centering
		\includegraphics[width=0.8\textwidth]{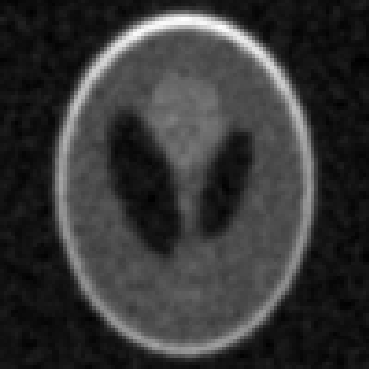}
		\caption{\footnotesize $J=8000$.}
	\end{subfigure}%
	\begin{subfigure}[t]{0.2\textwidth}
	\centering
		\includegraphics[width=0.8\textwidth]{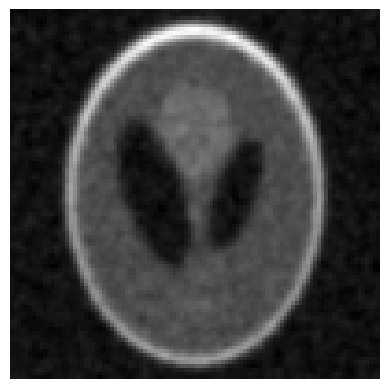}
		\caption{\footnotesize Tikhonov regularization.}
	\end{subfigure}%
\end{center}
\caption{Reconstruction with Nyström-EKI for different sample sizes $J$, using noisy data with a signal-to-noise ratio of $10$.}\label{fig:nys_to_tik}
\end{figure}

We also repeated the experiment for fixed regularization parameter $\alpha=0.03$ and different values of $J$ in order to examine the convergence estimate from \autoref{subsec:tikhonov} numerically. In \autoref{fig:convergence}, we plotted the approximation error with respect to Tikhonov regularization, normalized with $\norm{\xtrue}$, i.e.
\begin{align*}
e_\mathrm{app}(x; \alpha) := \frac{\norm{x - \xtika}}{\norm{\xtrue}}.
\end{align*}
In accordance with \autoref{large_ensemble_convergence}, the approximation error of Standard-EKI decreases like $J^{-1/2}$. However, it is still significant even if the number of ensembles is close to $n$. With Nyström-EKI or SVD-EKI, the approximation error becomes negligible even for relatively small sample sizes.

\begin{center}
\includegraphics[width=0.6\textwidth]{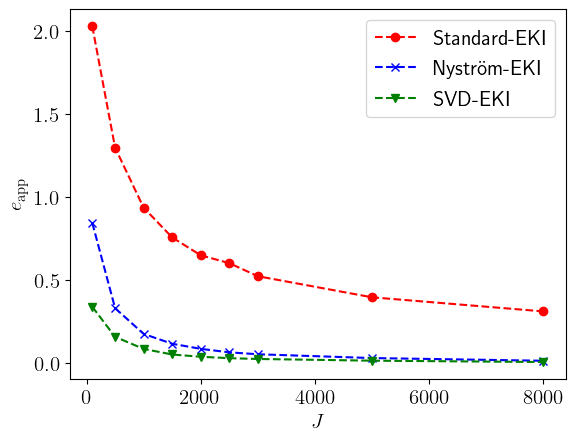}%
\captionof{figure}{The Standard-EKI, Nyström-EKI and SVD-EKI iterations for fixed regularization paramter $\alpha$ and varying sample size. The $x$-axis denotes the sample size $J$. The $y$-axis denotes the relative approximation error $e_\mathrm{app}$.}\label{fig:convergence}%
\end{center}

\subsection*{Divergence for small values of $\alpha$}
Keeping the sample size fixed at $J=2000$, we then repeated the experiment for different values of $\alpha$ (see \autoref{fig:divergence}). One sees that the approximation error of all three methods explodes as $\alpha \to 0$, which demonstrates the necessity of adapting the sample size. Again, Nyström-EKI and SVD-EKI are superior to Standard-EKI.

\begin{center}
\includegraphics[width=0.6\textwidth]{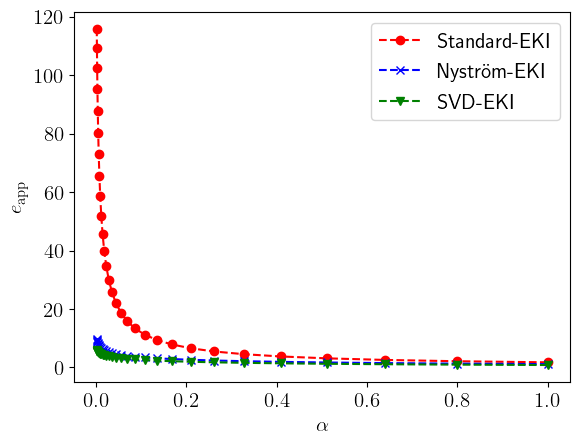}%
\captionof{figure}{The Standard-EKI, Nyström-EKI and SVD-EKI iterations for fixed sample size $J$ and varying regularization parameter. The $x$-axis denotes the regularization parameter $\alpha$. The $y$-axis denotes the scaled approximation error $ e_\mathrm{app}$ for EKI with sample size $J=2000$. As $\alpha$ approaches 0, the approximation error explodes.}\label{fig:divergence}%
\end{center}

\section{Conclusions}\label{sec:conclusions}

We have shown that ensemble Kalman inversion is a convergent regularization method if the sample size is adapted to the regularization parameter. The interpretation of EKI as a low-rank aproximation of Tikhonov regularization shows that it provides a trade-off between exactness and computational cost by shrinking the search space in which we try to reconstruct the unknown parameter $x$. This approach is suited for problems where the adjoint is not available and the noise is significant, since then the optimal regularization parameter $\alpha$ will typically be larger and a good approximation to the Tikhonov-regularized solution can be achieved for relatively small sample sizes (see \autoref{fig:divergence}).

It is important to note that the dimensionality reduction in EKI is completely a-priori. It uses no knowledge about the forward operator $L$ or the measurement $\obs$. This has the advantage that it also works in the case where the adjoint of $L$ is not available. On the other hand, if one has access to the adjoint of $L$, one can compute a low-rank approximation of the whole operator $\cov^{1/2} L^* R^{-1} L \cov^{1/2}$ instead \cite{FlaWilAkcHilWaa11}. This can yield superior results as it allows to also exploit the spectral decay of the forward operator $L$ \cite{SpaSolCuiMarTen15}.

While EKI was originally developed for nonlinear inverse problems, our insights from the linear case -- in particular the need for adapting the sample size to the noise level -- can serve as an Ansatz for an analysis of EKI as a regularization method for nonlinear inverse problems.

After all, the basic ideas of ensemble methods are simple and constitute a very general way to obtain linear dimensionality reduction and algorithms for black-box inverse problems. Therefore, another natural direction of research is to study the resulting stochastic approximations of classical iterative regularization methods, such as the iteratively regularized Gauss-Newton iteration \cite{Bak92}, and compare their performance to EKI for the case of nonlinear inverse problems.

\appendix
\subsection*{Acknowledgements}FP and OS are supported by the Austrian
Science Fund (FWF) with project I3661-N27 (Novel Error Measures and Source Conditions of
Regularization Methods for Inverse Problems). Moreover, FP and OS are supported by the Austrian
Science Fund (FWF), with SFB F68, project F6807-N36 (Tomography with
Uncertainties). The financial support by the Austrian Federal Ministry for Digital and Economic
Affairs, the National Foundation for Research, Technology and Development and the Christian Doppler
Research Association is gratefully acknowledged.

\section*{References}
\renewcommand{\i}{\ii}
\printbibliography[heading=none]

@ARTICLE{AlbElbHooSch16,
  AUTHOR = {Albani, V. and Elbau, P. and de Hoop, M. V. and Scherzer, O.},
  URL = {http://www.tandfonline.com/doi/pdf/10.1080/01630563.2016.1144070},
  DATE = {2016},
  DOI = {10.1080/01630563.2016.1144070},
  ISSN = {0163-0563},
  JOURNALTITLE = {Numerical Functional Analysis and Optimization},
  KEYWORDS = {P26687,FSPS117},
  NUMBER = {5},
  PAGES = {521--540},
  SHORTJOURNAL = {Numer. Funct. Anal. Optim.},
  TITLE = {Optimal Convergence Rates Results for Linear Inverse Problems in {H}ilbert Spaces},
  VOLUME = {37},
}

@ARTICLE{AndElbHooQiuSch15,
  AUTHOR = {Andreev, R. and Elbau, P. and de Hoop, M. V. and Qiu, L. and Scherzer, O.},
  DATE = {2015-03},
  DOI = {10.1080/01630563.2015.1021422},
  ISSN = {0163-0563},
  JOURNALTITLE = {Numerical Functional Analysis and Optimization},
  KEYWORDS = {FSPS117},
  NUMBER = {5},
  PAGES = {549--566},
  SHORTJOURNAL = {Numer. Funct. Anal. Optim.},
  TITLE = {Generalized Convergence Rates Results for Linear Inverse Problems in {H}ilbert Spaces},
  VOLUME = {36},
}

@ARTICLE{Bak92,
  AUTHOR = {Bakushinskii, A. B.},
  DATE = {1992},
  JOURNALTITLE = {Computational Mathematics and Mathematical Physics},
  NUMBER = {9},
  PAGES = {1353--1359},
  SHORTJOURNAL = {Comput. Math. Math. Phys.},
  TITLE = {The problem of the convergence of the iteratively regularized {G}au\ss{}--{N}ewton method},
  VOLUME = {32},
}

@ARTICLE{BauHohMun09,
  AUTHOR = {Bauer, F. and Hohage, T. and Munk, A.},
  PUBLISHER = {SIAM},
  DATE = {2009},
  DOI = {10.1137/080721789},
  ISSN = {0036-1429},
  JOURNALTITLE = {{SIAM} Journal on Numerical Analysis},
  NUMBER = {3},
  PAGES = {1827--1846},
  SHORTJOURNAL = {{SIAM} J. Numer. Anal.},
  TITLE = {Iteratively Regularized Gauss--Newton Method for Nonlinear Inverse Problems with Random Noise},
  VOLUME = {47},
}

@ARTICLE{BisEthMaj01,
  AUTHOR = {Bishop, C. H. and Etherton, B. J. and Majumdar, S. J.},
  LANGUAGE = {en},
  DATE = {2001},
  JOURNALTITLE = {Monthly Weather Review},
  PAGES = {17},
  SHORTJOURNAL = {Mon. Weather Rev.},
  TITLE = {Adaptive {Sampling} with the {Ensemble} {Transform} {Kalman} {Filter}. {Part} {I}: {Theoretical} {Aspects}},
  VOLUME = {129},
}

@ARTICLE{BisHohMunRuy07,
  AUTHOR = {Bissantz, N. and Hohage, T. and Munk, A. and Ruymgaart, F.},
  PUBLISHER = {SIAM},
  DATE = {2007},
  DOI = {10.1137/060651884},
  ISSN = {0036-1429},
  JOURNALTITLE = {{SIAM} Journal on Numerical Analysis},
  NUMBER = {6},
  PAGES = {2610--2636},
  SHORTJOURNAL = {{SIAM} J. Numer. Anal.},
  TITLE = {Convergence rates of general regularization methods for statistical inverse problems and applications},
  VOLUME = {45},
}

@BOOK{Bog98,
  AUTHOR = {Bogachev, V.I.},
  PUBLISHER = {American Mathematical Society},
  DATE = {1998},
  SERIES = {Mathematical Surveys and Monographs},
  TITLE = {Gaussian Measures},
  VOLUME = {62},
}

@ARTICLE{BurLeeEve98,
  AUTHOR = {Burgers, G. and van Leeuwen, Peter Jan and Evensen, Geir},
  DATE = {1998},
  JOURNALTITLE = {Monthly Weather Review},
  SHORTJOURNAL = {Mon. Weather Rev.},
  TITLE = {Analysis Scheme in the Ensemble Kalman Filter},
  VOLUME = {126},
}

@ARTICLE{ChaTon21,
  AUTHOR = {Chada, N. and Tong, X.},
  DATE = {2021},
  DOI = {10.1090/mcom/3709},
  ISSN = {0025-5718},
  JOURNALTITLE = {Mathematics of Computation},
  SHORTJOURNAL = {Math. Comp.},
  TITLE = {Convergence acceleration of ensemble Kalman inversion in nonlinear settings},
}

@ARTICLE{ChaStuTon20,
  AUTHOR = {Chada, N. K. and Stuart, A. M. and Tong, X. T.},
  PUBLISHER = {SIAM},
  DATE = {2020},
  DOI = {10.1137/19m1242331},
  ISSN = {0036-1429},
  JOURNALTITLE = {{SIAM} Journal on Numerical Analysis},
  NUMBER = {2},
  PAGES = {1263--1294},
  SHORTJOURNAL = {{SIAM} J. Numer. Anal.},
  TITLE = {Tikhonov Regularization within Ensemble Kalman Inversion},
  VOLUME = {58},
}

@BOOK{Dav07,
  AUTHOR = {Davies, E. B.},
  PUBLISHER = {Cambridge University Press},
  DATE = {2007},
  DOI = {10.1017/cbo9780511618864},
  ISBN = {9780511618864},
  TITLE = {Linear Operators and their Spectra},
}

@ARTICLE{DriMah05,
  AUTHOR = {Drineas, P. and Mahoney, Michael W.},
  DATE = {2005},
  ISSN = {1532-4435},
  JOURNALTITLE = {Journal of Machine Learning Research (JMLR)},
  SHORTJOURNAL = {J. Mach. Learn. Res.},
  TITLE = {On the Nystr\"{o}m Method for Approximating a Gram Matrix for Improved Kernel-Based Learning},
  VOLUME = {6},
}

@ARTICLE{EckYou36,
  AUTHOR = {Eckart, C. and Young, G.},
  DATE = {1936},
  DOI = {10.1007/bf02288367},
  JOURNALTITLE = {Psychometrika},
  NUMBER = {3},
  PAGES = {211--218},
  SHORTJOURNAL = {Psychometrika},
  TITLE = {The approximation of one matrix by another of lower rank},
  VOLUME = {1},
}

@BOOK{EngHanNeu96,
  AUTHOR = {Engl, H. W. and Hanke, M. and Neubauer, A.},
  LOCATION = {Dordrecht},
  PUBLISHER = {Kluwer Academic Publishers Group},
  DATE = {1996},
  ISBN = {0-7923-4157-0},
  NUMBER = {375},
  PAGETOTAL = {viii+321},
  SERIES = {Mathematics and its Applications},
  TITLE = {Regularization of inverse problems},
}

@ARTICLE{Eve94,
  AUTHOR = {Evensen, G.},
  DATE = {1994},
  DOI = {10.1029/94jc00572},
  ISSN = {0148-0227},
  JOURNALTITLE = {Journal of Geophysical Research},
  NUMBER = {C5},
  PAGES = {10143},
  SHORTJOURNAL = {J. Geophys. Res.},
  TITLE = {Sequential data assimilation with a nonlinear quasi-geostrophic model using {Monte} {Carlo} methods to forecast error statistics},
  VOLUME = {99},
}

@ARTICLE{FlaWilAkcHilWaa11,
  AUTHOR = {Flath, H. P. and Wilcox, L. C. and Ak\c{c}elik, V. and Hill, J. and van Bloemen Waanders, B. and Ghattas, O.},
  DATE = {2011},
  DOI = {10.1137/090780717},
  ISSN = {1064-8275},
  JOURNALTITLE = {{SIAM} Journal on Scientific Computing},
  NUMBER = {1},
  PAGES = {407--432},
  SHORTJOURNAL = {{SIAM} J. Sci. Comput.},
  TITLE = {Fast Algorithms for Bayesian Uncertainty Quantification in Large-Scale Linear Inverse Problems Based on Low-Rank Partial Hessian Approximations},
  VOLUME = {33},
}

@ARTICLE{GitMah16,
  AUTHOR = {Gittens, A. and Mahoney, M. W.},
  DATE = {2016},
  JOURNALTITLE = {Journal of Machine Learning Research (JMLR)},
  NUMBER = {17},
  PAGES = {3977--4041},
  SHORTJOURNAL = {J. Mach. Learn. Res.},
  TITLE = {Revisiting the Nystr\"{o}m method for improved large-scale machine learning},
  VOLUME = {1},
}

@ARTICLE{GreKalMiyIdeHun11,
  AUTHOR = {Greybush, S. J. and Kalnay, E. and Miyoshi, T. and Ide, K. and Hunt, B. R.},
  DATE = {2011},
  DOI = {10.1175/2010mwr3328.1},
  JOURNALTITLE = {Monthly Weather Review},
  NUMBER = {2},
  PAGES = {511--522},
  SHORTJOURNAL = {Mon. Weather Rev.},
  TITLE = {Balance and Ensemble Kalman Filter Localization Techniques},
  VOLUME = {139},
}

@INPROCEEDINGS{Gro83,
  AUTHOR = {Groetsch, C. W.},
  EDITOR = {H\"{a}mmerlin, G. and Hoffmann, K. H.},
  PUBLISHER = {Birkh\"{a}user, Basel},
  BOOKTITLE = {Improperly Posed Problems and Their Numerical Treatment},
  DATE = {1983},
  PAGES = {97--104},
  TITLE = {Comments on {M}orozov's {D}iscrepancy {P}rinciple},
}

@BOOK{Gro84,
  AUTHOR = {Groetsch, C. W.},
  LOCATION = {Boston},
  PUBLISHER = {Pitman},
  DATE = {1984},
  TITLE = {The Theory of Tikhonov Regularization for Fredholm Equations of the First Kind},
}

@ARTICLE{HalMarTro11,
  AUTHOR = {Halko, N. and Martinsson, P. G. and Tropp, J. A.},
  DATE = {2011},
  DOI = {10.1137/090771806},
  ISSN = {0036-1445},
  JOURNALTITLE = {{SIAM} Review},
  NUMBER = {2},
  PAGES = {217--288},
  SHORTJOURNAL = {{SIAM} Rev.},
  TITLE = {Finding Structure with Randomness: Probabilistic Algorithms for Constructing Approximate Matrix Decompositions},
  VOLUME = {53},
}

@ARTICLE{HanNeuSch95,
  AUTHOR = {Hanke, M. and Neubauer, A. and Scherzer, O.},
  URL = {http://dx.doi.org/10.1007/s002110050158},
  DATE = {1995},
  DOI = {10.1007/s002110050158},
  ISSN = {0029-599X},
  JOURNALTITLE = {Numerische Mathematik},
  NUMBER = {1},
  PAGES = {21--37},
  SHORTJOURNAL = {Numer. Math.},
  TITLE = {A convergence analysis of the {L}andweber iteration for nonlinear ill-posed problems},
  VOLUME = {72},
}

@ARTICLE{HarMilWalGomVir20,
  AUTHOR = {Harris, Ch. R. and Millman, K. J. and van der Walt, S. J. and Gommers, R. and Virtanen, P. and Cournapeau, D. and Wieser, E. and Taylor, J. and Berg, S. and Smith, N. J. and Kern, R. and Picus, M. and Hoyer, S. and van Kerkwijk, M. H. and Brett, M. and Haldane, A. and del R\'{\i{}}o, J. F. and Wiebe, M. and Peterson, P. and G\'{e}rard-Marchant, P. and Sheppard, K. and Reddy, T. and Weckesser, W. and Abbasi, H. and Gohlke, Ch. and Oliphant, T. E.},
  DATE = {2020},
  DOI = {10.1038/s41586-020-2649-2},
  JOURNALTITLE = {Nature},
  NUMBER = {7825},
  PAGES = {357--362},
  SHORTJOURNAL = {Nat.},
  TITLE = {Array programming with {NumPy}},
  VOLUME = {585},
}

@ARTICLE{Hoh00,
  AUTHOR = {Hohage, T.},
  DATE = {2000},
  DOI = {10.1080/01630560008816965},
  ISSN = {0163-0563},
  JOURNALTITLE = {Numerical Functional Analysis and Optimization},
  NUMBER = {3-4},
  PAGES = {439--464},
  SHORTJOURNAL = {Numer. Funct. Anal. Optim.},
  TITLE = {Regularization of exponentially ill-posed problems},
  VOLUME = {21},
}

@ARTICLE{HouMit01,
  AUTHOR = {Houtekamer, P. L. and Mitchell, H. L.},
  DATE = {2001},
  DOI = {10.1175/1520-0493(2001)129<0123:asekff>2.0.co;2},
  JOURNALTITLE = {Monthly Weather Review},
  NUMBER = {1},
  PAGES = {123--137},
  SHORTJOURNAL = {Mon. Weather Rev.},
  TITLE = {A Sequential Ensemble Kalman Filter for Atmospheric Data Assimilation},
  VOLUME = {129},
}

@ARTICLE{HouZha16,
  AUTHOR = {Houtekamer, P. L. and Zhang, F.},
  DATE = {2016},
  DOI = {10.1175/mwr-d-15-0440.1},
  JOURNALTITLE = {Monthly Weather Review},
  NUMBER = {12},
  PAGES = {4489--4532},
  SHORTJOURNAL = {Mon. Weather Rev.},
  TITLE = {Review of the Ensemble Kalman Filter for Atmospheric Data Assimilation},
  VOLUME = {144},
}

@ARTICLE{Igl14,
  AUTHOR = {Iglesias, M. A.},
  DATE = {2014},
  DOI = {10.1007/s10596-014-9456-5},
  JOURNALTITLE = {Computational Geosciences},
  NUMBER = {1},
  PAGES = {177--212},
  SHORTJOURNAL = {Comput. Geosci.},
  TITLE = {Iterative regularization for ensemble data assimilation in reservoir models},
  VOLUME = {19},
}

@ARTICLE{IglLawStu13,
  AUTHOR = {Iglesias, M. A. and Law, K. J. H. and Stuart, A. M.},
  DATE = {2013},
  DOI = {10.1088/0266-5611/29/4/045001},
  ISSN = {0266-5611},
  JOURNALTITLE = {Inverse Problems},
  NUMBER = {4},
  PAGES = {045001},
  SHORTJOURNAL = {Inverse Probl.},
  TITLE = {Ensemble {Kalman} methods for inverse problems},
  VOLUME = {29},
}

@BOOK{Kal02,
  AUTHOR = {Kallenberg, O.},
  PUBLISHER = {Springer New York},
  DATE = {2002},
  DOI = {10.1007/978-1-4757-4015-8},
  TITLE = {Foundations of Modern Probability},
}

@BOOK{KalNeuSch08,
  AUTHOR = {Kaltenbacher, B. and Neubauer, A. and Scherzer, O.},
  LOCATION = {Berlin},
  PUBLISHER = {Walter de Gruyter},
  URL = {http://dx.doi.org/10.1515/9783110208276},
  DATE = {2008},
  DOI = {10.1515/9783110208276},
  ISBN = {978-3-11-020420-9},
  SERIES = {Radon Series on Computational and Applied Mathematics},
  TITLE = {Iterative regularization methods for nonlinear ill-posed problems},
  VOLUME = {6},
}

@ARTICLE{KolLou17,
  AUTHOR = {Koltchinskii, V. and Lounici, K.},
  DATE = {2017},
  DOI = {10.3150/15-bej730},
  ISSN = {1350-7265},
  JOURNALTITLE = {Bernoulli},
  NUMBER = {1},
  SHORTJOURNAL = {Bernoulli},
  TITLE = {Concentration inequalities and moment bounds for sample covariance operators},
  VOLUME = {23},
}

@ARTICLE{KovStu19,
  AUTHOR = {Kovachki, N. B. and Stuart, A. M.},
  DATE = {2019},
  DOI = {10.1088/1361-6420/ab1c3a},
  ISSN = {0266-5611},
  JOURNALTITLE = {Inverse Problems},
  NUMBER = {9},
  PAGES = {095005},
  SHORTJOURNAL = {Inverse Probl.},
  TITLE = {Ensemble {Kalman} inversion: a derivative-free technique for machine learning tasks},
  VOLUME = {35},
}

@ARTICLE{Kro92,
  AUTHOR = {Kr\"{o}ger, P.},
  DATE = {1992},
  DOI = {10.1016/0022-1236(92)90052-k},
  JOURNALTITLE = {Journal of Functional Analysis},
  NUMBER = {2},
  PAGES = {353--357},
  SHORTJOURNAL = {J. Funct. Anal.},
  TITLE = {Upper bounds for the Neumann eigenvalues on a bounded domain in euclidean space},
  VOLUME = {106},
}

@BOOK{Kuc13,
  AUTHOR = {Kuchment, P.},
  LOCATION = {Philadelphia},
  PUBLISHER = {SIAM},
  DATE = {2013},
  SERIES = {CBMS-NSF Regional Conference Series in Applied Mathematics},
  TITLE = {The Radon Transform and Medical Imaging},
}

@ARTICLE{KwiMan15,
  AUTHOR = {Kwiatkowski, E. and Mandel, J.},
  DATE = {2015},
  DOI = {10.1137/140965363},
  ISSN = {2166-2525},
  JOURNALTITLE = {{SIAM}/{ASA} Journal on Uncertainty Quantification},
  NUMBER = {1},
  PAGES = {1--17},
  SHORTJOURNAL = {{SIAM/ASA} J. Uncertain. Quantification},
  TITLE = {Convergence of the {Square} {Root} {Ensemble} {Kalman} {Filter} in the {Large} {Ensemble} {Limit}},
  VOLUME = {3},
}

@REPORT{GlaMonTra09_report,
  AUTHOR = {LeGland, F. and Monbet, V. and Tran, V.-D.},
  INSTITUTION = {INRIA},
  URL = {https://hal.inria.fr/inria-00409060},
  DATE = {2009},
  NUMBER = {RR-7014},
  TITLE = {Large sample asymptotics for the ensemble Kalman filter},
  TYPE = {Research Report},
}

@ARTICLE{Mir60,
  AUTHOR = {Mirsky, L.},
  DATE = {1960},
  DOI = {10.1093/qmath/11.1.50},
  JOURNALTITLE = {The Quarterly Journal of Mathematics},
  NUMBER = {1},
  PAGES = {50--59},
  SHORTJOURNAL = {Q. J. Math.},
  TITLE = {Symmetric gauge functions and unitarily invariant norms},
  VOLUME = {11},
}

@INPROCEEDINGS{MorNisWanTumLia18,
  AUTHOR = {Moritz, Philipp and Nishihara, Robert and Wang, Stephanie and Tumanov, Alexey and Liaw, Richard and Liang, Eric and Elibol, Melih and Yang, Zongheng and Paul, William and Jordan, Michael I. and Stoica, Ion},
  LOCATION = {Carlsbad, CA, USA},
  PUBLISHER = {USENIX Association},
  BOOKTITLE = {Proceedings of the 13th USENIX Conference on Operating Systems Design and Implementation},
  DATE = {2018},
  ISBN = {9781931971478},
  PAGES = {561--577},
  SERIES = {OSDI'18},
  TITLE = {Ray: A Distributed Framework for Emerging AI Applications},
}

@ARTICLE{NadPotRho18,
  AUTHOR = {Nadeem, A. and Potthast, R. and Rhodin, A.},
  DATE = {2018},
  DOI = {10.1016/j.cam.2017.08.013},
  JOURNALTITLE = {Journal of Computational and Applied Mathematics},
  PAGES = {338--352},
  SHORTJOURNAL = {J. Comput. Appl. Math.},
  TITLE = {On sequential multiscale inversion and data assimilation},
  VOLUME = {336},
}

@BOOK{NakPot15,
  AUTHOR = {Nakamura, G. and Potthast, R.},
  LOCATION = {Bristol, UK},
  PUBLISHER = {IOP Publishing},
  DATE = {2015},
  TITLE = {Inverse Modeling},
}

@REPORT{Nak20_report,
  AUTHOR = {Nakatsukasa, Y.},
  URL = {http://arxiv.org/abs/2009.11392},
  DATE = {2020},
  JOURNALTITLE = {arXiv},
  TITLE = {Fast and stable randomized low-rank matrix approximation},
  TYPE = {Preprint},
  VOLUME = {arXiv:2009.11392},
}

@ARTICLE{Neu97,
  AUTHOR = {Neubauer, A.},
  PUBLISHER = {SIAM},
  DATE = {1997},
  DOI = {10.1137/s0036142993253928},
  ISSN = {0036-1429},
  JOURNALTITLE = {{SIAM} Journal on Numerical Analysis},
  NUMBER = {2},
  PAGES = {517--527},
  SHORTJOURNAL = {{SIAM} J. Numer. Anal.},
  TITLE = {On converse and saturation results for {T}ikhonov regularization of linear ill-posed problems},
  VOLUME = {34},
}

@ARTICLE{NeuSch90,
  AUTHOR = {Neubauer, A. and Scherzer, O.},
  URL = {http://dx.doi.org/10.1080/01630569008816362},
  DATE = {1990},
  DOI = {10.1080/01630569008816362},
  ISSN = {0163-0563},
  JOURNALTITLE = {Numerical Functional Analysis and Optimization},
  NUMBER = {1-2},
  PAGES = {85--99},
  SHORTJOURNAL = {Numer. Funct. Anal. Optim.},
  TITLE = {Finite-dimensional approximation of {T}ikhonov regularized solutions of nonlinear ill-posed problems},
  VOLUME = {11},
}

@ARTICLE{Nys30,
  AUTHOR = {Nystr\"{o}m, E. J.},
  DATE = {1930},
  DOI = {10.1007/bf02547521},
  ISSN = {0001-5962},
  JOURNALTITLE = {Acta Mathematica},
  PAGES = {185--204},
  SHORTJOURNAL = {Acta Math.},
  TITLE = {\"{U}ber Die Praktische Aufl\"{o}sung von Integralgleichungen mit Anwendungen auf Randwertaufgaben},
  VOLUME = {54},
}

@ARTICLE{OttHunSzuZimKos04,
  AUTHOR = {Ott, E. and Hunt, B. R. and Szunyogh, I. and Zimin, A. V. and Kostelich, E. J. and Corazza, M. and Kalnay, E. and Patil, D. J. and Yorke, J. A.},
  DATE = {2004},
  DOI = {10.3402/tellusa.v56i5.14462},
  JOURNALTITLE = {Tellus A: Dynamic Meteorology and Oceanography},
  NUMBER = {5},
  PAGES = {415--428},
  SHORTJOURNAL = {Tellus A: Dyn. Meteorol. Oceanogr.},
  TITLE = {A local ensemble Kalman filter for atmospheric data assimilation},
  VOLUME = {56},
}

@ARTICLE{RaaStoEve19,
  AUTHOR = {Raanes, P. N. and Stordal, A. S. and Evensen, G.},
  DATE = {2019},
  DOI = {10.5194/npg-26-325-2019},
  JOURNALTITLE = {Nonlinear Processes in Geophysics},
  NUMBER = {3},
  PAGES = {325--338},
  SHORTJOURNAL = {Nonlinear Process. Geophys.},
  TITLE = {Revising the stochastic iterative ensemble smoother},
  VOLUME = {26},
}

@ARTICLE{RauTunStr65,
  AUTHOR = {Rauch, H. E. and Tung, F. and Striebel, C. T.},
  DATE = {1965},
  DOI = {10.2514/3.3166},
  JOURNALTITLE = {AIAA Journal},
  NUMBER = {8},
  PAGES = {1445--1450},
  SHORTJOURNAL = {AIAA J.},
  TITLE = {Maximum likelihood estimates of linear dynamic systems},
  VOLUME = {3},
}

@BOOK{ReiCot15,
  AUTHOR = {Reich, S. and Cotter, C.},
  DATE = {2015},
  DOI = {10.1017/cbo9781107706804},
  TITLE = {Probabilistic Forecasting and Bayesian Data Assimilation},
}

@ARTICLE{Sch98,
  AUTHOR = {Scherzer, O.},
  URL = {http://dx.doi.org/10.1007/s002459900081},
  DATE = {1998},
  DOI = {10.1007/s002459900081},
  ISSN = {0095-4616},
  JOURNALTITLE = {Applied Mathematics {\&} Optimization},
  NUMBER = {1},
  PAGES = {45--68},
  SHORTJOURNAL = {Appl. Math. Optim.},
  TITLE = {A modified {L}andweber iteration for solving parameter estimation problems},
  VOLUME = {38},
}

@ARTICLE{Sch01a,
  AUTHOR = {Scherzer, O.},
  DATE = {2001},
  DOI = {10.1016/S0362-546X(99)00413-7},
  ISSN = {0362-546X},
  JOURNALTITLE = {Nonlinear Analysis: Theory, Methods {\&} Applications},
  NUMBER = {4},
  PAGES = {459--481},
  SHORTJOURNAL = {Nonlinear Anal.},
  TITLE = {A posteriori error estimates for the solution of nonlinear ill-posed operator equations},
  VOLUME = {45},
}

@ARTICLE{SchiStu17b,
  AUTHOR = {Schillings, C. and Stuart, A. M.},
  PUBLISHER = {SIAM},
  DATE = {2017},
  DOI = {10.1137/16m105959x},
  ISSN = {0036-1429},
  JOURNALTITLE = {{SIAM} Journal on Numerical Analysis},
  NUMBER = {3},
  PAGES = {1264--1290},
  SHORTJOURNAL = {{SIAM} J. Numer. Anal.},
  TITLE = {Analysis of the Ensemble Kalman Filter for Inverse Problems},
  VOLUME = {55},
}

@ARTICLE{SchiStu17,
  AUTHOR = {Schillings, C. and Stuart, A. M.},
  DATE = {2017},
  DOI = {10.1080/00036811.2017.1386784},
  ISSN = {0003-6811},
  JOURNALTITLE = {Applicable Analysis},
  NUMBER = {1},
  PAGES = {107--123},
  SHORTJOURNAL = {Appl. Anal.},
  TITLE = {Convergence analysis of ensemble Kalman inversion: the linear, noisy case},
  VOLUME = {97},
}

@ARTICLE{Schm07,
  AUTHOR = {Schmidt, E.},
  LOCATION = {Berlin, Heidelberg},
  PUBLISHER = {Springer},
  DATE = {1907},
  DOI = {10.1007/bf01449770},
  ISSN = {0025-5831},
  JOURNALTITLE = {Mathematische Annalen},
  NUMBER = {4},
  PAGES = {433--476},
  SHORTJOURNAL = {Math. Ann.},
  TITLE = {Zur Theorie der linearen und nichtlinearen Integralgleichungen},
  VOLUME = {63},
}

@ARTICLE{SheLog74,
  AUTHOR = {Shepp, L. A. and Logan, B. F.},
  DATE = {1974},
  DOI = {10.1109/tns.1974.6499235},
  JOURNALTITLE = {{IEEE} Transactions on Nuclear Science},
  NUMBER = {3},
  PAGES = {21--43},
  SHORTJOURNAL = {{IEEE} Trans. Nucl. Sci.},
  TITLE = {The Fourier reconstruction of a head section},
  VOLUME = {21},
}

@ARTICLE{SpaSolCuiMarTen15,
  AUTHOR = {Spantini, A. and Solonen, A. and Cui, T. and Martin, J. and Tenorio, L. and Marzouk, Y.},
  DATE = {2015},
  DOI = {10.1137/140977308},
  ISSN = {1064-8275},
  JOURNALTITLE = {{SIAM} Journal on Scientific Computing},
  NUMBER = {6},
  PAGES = {A2451--A2487},
  SHORTJOURNAL = {{SIAM} J. Sci. Comput.},
  TITLE = {Optimal Low-rank Approximations of Bayesian Linear Inverse Problems},
  VOLUME = {37},
}

@INCOLLECTION{Tar19,
  AUTHOR = {Tarvainen, T.},
  EDITOR = {Ramlau, R. and Scherzer, O.},
  PUBLISHER = {De Gruyter},
  URL = {https://www.degruyter.com/view/books/9783110560855/9783110560855-011/9783110560855-011.xml},
  BOOKTITLE = {The Radon Transform: The First 100 Years and Beyond},
  DATE = {2019},
  ISBN = {978-3-11-056085-5},
  NUMBER = {22},
  PAGES = {239--272},
  SERIES = {Radon Series on Computational and Applied Mathematics},
  TITLE = {Quantitative photoacoustic tomography in Bayesian framework},
}

@ARTICLE{TipAndBisHamWhi03,
  AUTHOR = {Tippett, M. K. and Anderson, J. L. and Bishop, C. H. and Hamill, T. M. and Whitaker, J. S.},
  DATE = {2003},
  DOI = {10.1175/1520-0493(2003)131<1485:esrf>2.0.co;2},
  JOURNALTITLE = {Monthly Weather Review},
  NUMBER = {7},
  PAGES = {1485--1490},
  SHORTJOURNAL = {Mon. Weather Rev.},
  TITLE = {Ensemble Square Root Filters},
  VOLUME = {131},
}

@REPORT{TonMor22_report,
  AUTHOR = {Tong, X. T. and Morzfeld, Matthias},
  URL = {https://arxiv.org/abs/2201.10821},
  DATE = {2022-01},
  NUMBER = {arXiv:2201.10821},
  TITLE = {Localization in Ensemble Kalman inversion},
  TYPE = {Preprint on ArXiv},
}

@INPROCEEDINGS{VidPer18,
  AUTHOR = {Vidal, A. F. and Pereyra, M.},
  BOOKTITLE = {2018 25th IEEE International Conference on Image Processing (ICIP)},
  DATE = {2018},
  DOI = {10.1109/icip.2018.8451795},
  TITLE = {Maximum Likelihood Estimation of Regularisation Parameters},
}

@ARTICLE{VirGomOliHabRed20,
  AUTHOR = {Virtanen, P. and Gommers, R. and Oliphant, T. E. and Haberland, M. and Reddy, T. and Cournapeau, D. and Burovski, E. and Peterson, P. and Weckesser, W. and Bright, J. and van der Walt, S. J. and Brett, M. and Wilson, J. and Millman, K. J. and Mayorov, N. and Nelson, A. R. J. and Jones, E. and Kern, R. and Larson, E. and Carey, C. J. and Polat, \.{I}. and Feng, Y. and Moore, E. W. and VanderPlas, J. and Laxalde, D. and Perktold, J. and Cimrman, R. and Henriksen, I. and Quintero, E. A. and Harris, Ch. R. and Archibald, A. M. and Ribeiro, A. H. and Pedregosa, F. and van Mulbregt, P. and Vijaykumar, A. and Bardelli, A. P. and Rothberg, A. and Hilboll, A. and Kloeckner, A. and Scopatz, A. and Lee, A. and Rokem, A. and Woods, C. N. and Fulton, Ch. and Masson, Ch. and H\"{a}ggstr\"{o}m, Ch. and Fitzgerald, C. and Nicholson, D. A. and Hagen, D. R. and Pasechnik, D. V. and Olivetti, E. and Martin, E. and Wieser, E. and Silva, F. and Lenders, F. and Wilhelm, F. and Young, G. and Price, G. A. and Ingold, G.-L. and Allen, G. E. and Lee, G. R. and Audren, H. and Probst, I. and Dietrich, J. P. and Silterra, J. and Webber, J. T. and Slavi\v{c}, J. and Nothman, J. and Buchner, J. and Kulick, J. and Sch\"{o}nberger, J. L. and de Miranda Cardoso, J. V. and Reimer, J. and Harrington, J. and Rodr\'{\i{}}guez, J. L. C. and Nunez-Iglesias, J. and Kuczynski, J. and Tritz, K. and Thoma, M. and Newville, M. and K\"{u}mmerer, M. and Bolingbroke, M. and Tartre, M. and Pak, M. and Smith, N. J. and Nowaczyk, N. and Shebanov, N. and Pavlyk, O. and Brodtkorb, P. A. and Lee, P. and McGibbon, R. T. and Feldbauer, R. and Lewis, S. and Tygier, S. and Sievert, S. and Vigna, S. and Peterson, S. and More, S. and Pudlik, T. and Oshima, T. and Pingel, T. J. and Robitaille, T. P. and Spura, T. and Jones, T. R. and Cera, T. and Leslie, T. and Zito, T. and Krauss, T. and Upadhyay, U. and Halchenko, Y. O. and V\'{a}zquez-Baeza, Y.},
  DATE = {2020},
  DOI = {10.1038/s41592-019-0686-2},
  JOURNALTITLE = {Nature Methods},
  NUMBER = {3},
  PAGES = {261--272},
  SHORTJOURNAL = {Nat. Meth.},
  TITLE = {{SciPy} 1.0: fundamental algorithms for scientific computing in Python},
  VOLUME = {17},
}

@ARTICLE{WalSchoNunBouWar14,
  AUTHOR = {van der Walt, S. and Sch\"{o}nberger, J. L. and Nunez-Iglesias, J. and Boulogne, F. and Warner, J. D. and Yager, N. and Gouillart, E. and Yu, T. and the scikit-image contributors},
  DATE = {2014},
  JOURNALTITLE = {PeerJ},
  NUMBER = {453},
  SHORTJOURNAL = {PeerJ},
  TITLE = {scikit-image: image processing in Python},
}

@REPORT{Wei22_report,
  AUTHOR = {Weissman, S.},
  URL = {https://arxiv.org/abs/2203.17117},
  DATE = {2022-03},
  NUMBER = {arXiv:2203.17117},
  TITLE = {Gradient flow structure and convergence analysis of the ensemble Kalman inversion for nonlinear forward models},
  TYPE = {Preprint on ArXiv},
}

\section{Appendix: Random elements of Hilbert spaces} \label{sec:appendix}

We recapitulate basic notions from probability theory on Hilbert spaces.

\begin{definition}[Random element, expectation, covariance]
Let $(\Omega,\mathcal{F},\PP)$ denote a probability space. 
\begin{enumerate}
\item 
A \emph{random element} of a real Hilbert space $\XX$ 
is a measurable function $X: \Omega \to \XX$. 
We call 
\begin{equation} \label{eq:E}
\mathcal{M}(\XX) := \set{X:\Omega \to \XX: X \text{ is measurable}}.
\end{equation}
\item A \emph{random continuous linear operator} from $\XX$ to $\YY$ is a measurable map $A: \Omega \to \BL(\XX;\YY)$.
\item The expectation of a random element $X$ of $\mathcal{M}(\XX)$ is defined as
\begin{align*}
\Exp X = \int_\Omega X(\omega) \d \PP(\omega) \in \XX
\end{align*}
\item Furthermore, its covariance operator $\Cov{X}: \XX \to \XX$ is defined by
\begin{align*}
\Cov{X}u = \int_\Omega \xinner{X(\omega)-\Exp X}{u}(X(\omega) - \Exp X) \d \PP(\omega), \qquad u \in \XX.
\end{align*}
\item 
We call a random element $X:\Omega \to \XX$ of a Hilbert space $\XX$ \emph{Gaussian} if for every continuous linear functional $L \in \XX^*$, $LX: \Omega \to \R$ is a Gaussian random element of $\R$. That is there exist $\sigma_L > 0$ and $m_L \in \R$ such that for all $z \in \R$
\begin{equation} \label{eq:Gaussian}
\PP\left(\set{\omega:LX(\omega) \leq z} \right) = \frac{1}{\sqrt{2\pi \sigma_L^2}} 
\int_{-\infty}^z \e^{-\frac{(\xi-m_L)^2}{2 \sigma_L^2}} d\xi.
\end{equation} 
\item It can be shown that for every $m \in \XX$ and every positive and self-adjoint trace class operator $C$ there exists a unique Gaussian random element $X$ with $\Exp X= m$ and $\Cov X = C$. 
In that case, we will use the notation $X \sim \normal(m, C)$.
\item Let $\bm X = (X_1,\ldots,X_J) \in \mathcal(\XX)^J$ be a random ensemble. We call the mapping
\begin{equation}
\begin{aligned}
\Encov(\xens): \mathcal \HH &\to \HH, \\
v & \mapsto \frac{1}{J} \sum_{j=1}^J (X_j - {\bf{\bar X}} ) \inner{X_j - {\bf{\bar X}}}{v}
\end{aligned}\label{eq:encov}
\end{equation}
the \emph{sample covariance}.
\end{enumerate}
\end{definition}

Furthermore, we recall Markov's inequality as it is used in the proof of \autoref{complement_estimate} (see e.g. \cite[lemma 4.1]{Kal02}).

\begin{lemma}[Markov]\label{markov}
Let $X: \Omega \to [0,\infty)$ be a nonnegative real-valued random variable, $p \in [1, \infty)$ and $a > 0$. Then
\begin{align*}
\PP(X > a) \leq \frac{\Exp{X^p}}{a^p}.\end{align*}
\end{lemma}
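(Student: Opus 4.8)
The plan is to reduce the tail bound to a pointwise inequality between a random variable and an indicator function, and then to integrate. First I would fix $p \in [1,\infty)$ and $a > 0$ and consider the event $A := \set{\omega \in \Omega}{X(\omega) > a}$, together with its indicator $\1_A$. The key observation is the deterministic pointwise estimate
\begin{align*}
\1_A(\omega) \leq \frac{X(\omega)^p}{a^p} \qquad \text{for all } \omega \in \Omega.
\end{align*}
Indeed, on $A$ we have $X(\omega) > a$, hence $X(\omega)^p / a^p > 1 = \1_A(\omega)$ (here $p \geq 1$ and $a > 0$ are used), while on the complement $\Omega \setminus A$ we have $\1_A(\omega) = 0 \leq X(\omega)^p / a^p$ because $X$ is nonnegative.

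Next I would integrate this inequality over $\Omega$ with respect to $\PP$ and use monotonicity and linearity of the expectation:
\begin{align*}
\PP(X > a) = \Exp{\1_A} \leq \Exp{\frac{X^p}{a^p}} = \frac{\Exp{X^p}}{a^p},
\end{align*}
which is the claimed bound. If $\Exp{X^p} = \infty$ the statement is vacuous, so one may assume $X^p$ is integrable; the identity $\Exp{\1_A} = \PP(A)$ is just the definition of the probability of an event.

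I do not expect any real obstacle here: the argument is entirely elementary and self-contained, relying only on the nonnegativity of $X$, the monotonicity of the integral, and the fact that $t \mapsto t^p$ is nondecreasing on $[0,\infty)$ for $p \geq 1$. The only point worth stating explicitly is the reduction to the pointwise indicator inequality, since that is where the hypotheses $X \geq 0$, $a > 0$ and $p \geq 1$ enter; everything after that is a one-line integration.
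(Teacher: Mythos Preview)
Your argument is correct and is the standard proof of Markov's inequality. The paper itself does not give a proof but simply refers to \cite[lemma 4.1]{Kal02}, so there is nothing to compare against; your proposal fills in the omitted details cleanly. One minor remark: the pointwise inequality $\1_A \leq X^p/a^p$ only needs $t \mapsto t^p$ to be nondecreasing on $[0,\infty)$, which already holds for any $p>0$, so the restriction $p \geq 1$ is not actually used in your argument (though it is part of the stated hypothesis).
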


\section{EKI with stochastic perturbations}\label{sec:eki_with_perturbations}

The deterministic formulation of EKI that we considered in this paper (see \autoref{de:EKI}) is based on the ensemble-transform Kalman filter (ETKF) by Bishop, Etherton and Majumdar \cite{BisEthMaj01}. It was for example also studied in \cite{ChaTon21}. In contrast, the original formulation of EKI \cite{IglLawStu13} was based on the EnKF with perturbation of measurements \cite{BurLeeEve98}. While the ETKF updates the current state estimate $\Xk$ and the ensemble anomaly $\Ank$ directly, the EnKF iterates a complete ensemble $\xens_k$ and updates each ensemble member individually. We call this variant the stochastic form of EKI:
\begin{definition}[Stochastic EKI]\label{de:EKI_perturbed}
Given is $R \in \BL(\YY;\YY)$ and an ensemble \\
$\xens_{0} = (X_{0,1},\ldots, X_{0,J})$ of independent and identically distributed random elements $X_{0,1},\ldots,X_{0,J}$.
\begin{description}
\item \emph{Initialization:}  Set $\bm C_0 =\Encov(\xens_0)$ (see \autoref{eq:encov}).
\item \emph{Iteration ($k \to k+1$):} Let $\xi_{k,1},\ldots,\xi_{k,J}$ be independent and identically distributed Gaussian random elements of $\YY$ with $\xi_{k,1},\ldots,\xi_{k,J} \sim \normal(0,\noicov)$. For each $j \in \lbrace 1,\ldots,J \rbrace$, set
\begin{equation}\label{eq:EKI_perturbed}
\Xj_{k+1,j} = \Xj_{k,j} + \Ck L^* \left(L \Ck L^* + R \right)^{-1} (\obs + \xi_{k,j} - L \Xj_{k,j}),
\end{equation}
and then set $\Cj_{k+1} = \Encov(\xens_{k+1})$.
\end{description}
\end{definition}
In the present paper, we have focused on the deterministic version of EKI given by \autoref{de:EKI}, since it has been observed to perform more reliably in practice as it does not introduce additional noise at every step of the iteration \cite{ChaTon21}. Nevertheless, both variants seem to be equivalent in the large ensemble-limit. In the linear case, this has been proven:

\begin{proposition}
Suppose that \autoref{assumption} holds and $\cov$ is in the trace class. Let $(\Xk)_{k=1}^\infty$ be the deterministic EKI iteration (see \autoref{de:EKI}), and $(\xens_k)_{k=1}^\infty$ be the stochastic EKI iteration (see \autoref{de:EKI_perturbed}). Let $p \in [1,\infty)$ and $k \in \N$. Then, we have
\begin{align*}
& \Lpxnorm{\Xk - \xensmean_k}{p} \to 0 \\
\text{and} \quad & \Lpxopnorm{\Ank \Ank^* - \Encov(\xens_k)}{p} \to 0,
\end{align*}
as $J \to \infty$.
\end{proposition}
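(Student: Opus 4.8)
The plan is an induction on $k$, carried out in the covariance (Kalman) form of both iterations. For a positive, self-adjoint, finite-rank operator $C\in\BL(\XX;\XX)$ set $\Phi_C(m):=m+CL^*(LCL^*+\noicov)^{-1}(\obs-Lm)$ and $\Psi(C):=C-CL^*(LCL^*+\noicov)^{-1}LC$; under \autoref{assumption} both are well defined, since $CL^*\noicov^{-1/2}$ is bounded (in fact finite rank), $(\noicov^{-1/2}LCL^*\noicov^{-1/2}+\Idy)^{-1}$ is bounded, and $\obs\in\ran{\noicov^{1/2}}$. By \autoref{rem:covariance_form}, writing $\Cj_k:=\Ank\Ank^*$, the deterministic iteration satisfies $\Xj_{k+1}=\Phi_{\Cj_k}(\Xk)$ and $\Cj_{k+1}=\Psi(\Cj_k)$. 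Averaging the member update \autoref{eq:EKI_perturbed} over $j$ gives $\xensmean_{k+1}=\Phi_{\Encov(\xens_k)}(\xensmean_k)+G_k\bar\xi_k$, where $G_k:=\Encov(\xens_k)L^*(L\Encov(\xens_k)L^*+\noicov)^{-1}$ is the (finite-rank) sample Kalman gain and $\bar\xi_k:=\tfrac1J\sum_{j=1}^J\xi_{k,j}$; centring \autoref{eq:EKI_perturbed} and passing to sample covariances gives $\Encov(\xens_{k+1})=\Psi(\Encov(\xens_k))+E_k$, where $E_k$ collects the discrepancy between the empirical and the true covariance of the honest random elements $G_k\xi_{k,j}$, together with the empirical cross-covariance between the centred ensemble and those elements. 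The algebraic input making this identification possible is the Joseph-form identity $(\Idx-G_kL)\Encov(\xens_k)(\Idx-G_kL)^*+G_k\noicov G_k^*=\Psi(\Encov(\xens_k))$, which is exactly why the observation perturbations reproduce $\Psi$ in the large-ensemble limit.

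The induction hypothesis at step $k$ is that, for every $p\in[1,\infty)$, $\Lpxnorm{\Xk-\xensmean_k}{p}\to0$ and $\Lpxopnorm{\Ank\Ank^*-\Encov(\xens_k)}{p}\to0$ as $J\to\infty$, together with the auxiliary requirement that $\xopnorm{\Cj_k}$, $\xopnorm{\Encov(\xens_k)}$, $\xnorm{\Xk}$, $\xnorm{\xensmean_k}$ and $\tr\Encov(\xens_k)$ have $L^r$-norms bounded independently of $J$, for every $r\in[1,\infty)$. For $k=0$ one has $\Xj_0=x_0$ deterministic while $\xensmean_0\to x_0$ in every $L^r$ by the law of large numbers (all moments of the Gaussian $X_{0,j}$ being finite by Fernique's theorem, since $\cov$ is trace class); moreover $\Cj_0\to\cov$ and $\Encov(\xens_0)\to\cov$ in every $L^r$ (by \autoref{de:lra} and \autoref{sample_covariance}), so their difference tends to $0$. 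The auxiliary bounds at $k=0$ are then clear, the one on $\tr\Encov(\xens_0)$ following from standard Gaussian moment estimates for the sample covariance of $\xens_0$.

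For the inductive step I would use the splittings $\Xj_{k+1}-\xensmean_{k+1}=(\Phi_{\Cj_k}(\Xk)-\Phi_{\Encov(\xens_k)}(\Xk))+(\Idx-G_kL)(\Xk-\xensmean_k)-G_k\bar\xi_k$ and $\Cj_{k+1}-\Encov(\xens_{k+1})=(\Psi(\Cj_k)-\Psi(\Encov(\xens_k)))-E_k$. The first summand of each is handled by Lipschitz estimates for $C\mapsto\Phi_C(m)$ and $C\mapsto\Psi(C)$ on operator-norm balls, obtained by the argument used in the proof of \autoref{ekiEstimate} from the resolvent bounds \autoref{eq:spectral_estimate1}--\autoref{eq:spectral_estimate2} and \autoref{eq:rl_condition} (as there, $CL^*\noicov^{-1/2}$ enters linearly in $C$, so the square-root/Hölder obstruction does not arise); combined with the induction hypothesis and Hölder's inequality these tend to $0$ in every $L^p$. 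The middle term of the first splitting is controlled by $\xopnorm{\Idx-G_kL}\le 1+\xopnorm{\Encov(\xens_k)}^{1/2}$ together with the induction hypothesis. The noise terms are the core of the step: conditionally on $\xens_k$, $\bar\xi_k$ is a (cylindrical) Gaussian of covariance $\tfrac1J\noicov$ and $G_k\noicov^{1/2}$ is finite rank with $\norm{G_k\noicov^{1/2}}_{\mathrm{HS}}^2\le\tfrac14\tr\Encov(\xens_k)$, whence $\Exp{\xnorm{G_k\bar\xi_k}^p\mid\xens_k}\le c_p\,(J^{-1}\tr\Encov(\xens_k))^{p/2}$ and $\Lpxnorm{G_k\bar\xi_k}{p}=O(J^{-1/2})$ by the trace bound; $E_k$ is estimated likewise, its covariance part by applying \autoref{sample_covariance} conditionally on $\xens_k$ to the iid elements $G_k\xi_{k,j}$ (whose covariance $G_k\noicov G_k^*$ is trace class with trace at most $\tfrac14\tr\Encov(\xens_k)$), its cross part via zero conditional mean and conditional second moment of order $J^{-1}$. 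Finally one re-establishes the auxiliary bounds at step $k+1$: $\xopnorm{\Cj_{k+1}}=\xopnorm{\Psi(\Cj_k)}\le\xopnorm{\Cj_k}$ since $0\le\Psi(C)\le C$, $\xopnorm{\Encov(\xens_{k+1})}$ is then bounded via the convergence just shown, and $\tr\Encov(\xens_{k+1})$ is bounded using $\Psi(C)\le C$, the Joseph-form identity and the Cauchy--Schwarz inequality for the cross term.

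The main obstacle is the treatment of the observation perturbations in infinite dimensions: since $\xi_{k,j}\sim\normal(0,\noicov)$ need not be an honest element of $\YY$ when $\YY$ is infinite dimensional, one must work throughout with the finite-rank images $G_k\xi_{k,j}$ and verify that the operators $G_k\noicov G_k^*$ are trace class with trace controlled by $\tr\Encov(\xens_k)$, which in turn forces one to propagate the uniform-in-$J$ moment bounds (above all on $\tr\Encov(\xens_k)$, and on $\xnorm{\Xk}$, $\xnorm{\xensmean_k}$) through the iteration. Establishing these bounds, and recognising through the Joseph-form identity that the perturbations are exactly calibrated to reproduce $\Psi$ in the limit, is the substance of the argument; the remaining estimates are routine perturbations of the proof of \autoref{ekiEstimate}. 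A less self-contained alternative would be to combine the known convergence of the stochastic EKI ensemble to the Kalman filter with the convergence of deterministic EKI to the same limit; that route, however, does not directly yield the $L^p$ statement, which in fact comes out as $O(J^{-1/2})$ for each fixed $k$.
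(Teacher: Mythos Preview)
Your proposal is correct in outline and considerably more detailed than what the paper actually does. The paper's proof consists of a single sentence referring to two external results---the convergence of the ensemble square-root filter to the Kalman filter (Kwiatkowski--Mandel) and the analogous result for the perturbed-observation EnKF (Le Gland--Monbet--Tran)---and leaves the reader to conclude that both iterations share the same large-$J$ limit. You explicitly mention this route at the end of your proposal and correctly observe that it does not immediately yield the $L^p$ statement for $p>2$, nor any quantitative rate.

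Your direct induction is therefore a genuinely different and more informative argument. What it buys: a self-contained proof, an explicit $O(J^{-1/2})$ rate at each fixed $k$, and a transparent identification (via the Joseph form) of why the perturbations are calibrated to reproduce $\Psi$. What the paper's route buys: brevity, and insulation from the technical bookkeeping you have to carry---in particular the propagation of uniform-in-$J$ moment bounds on $\tr\Encov(\xens_k)$, $\xnorm{\xensmean_k}$, and $\xopnorm{\Encov(\xens_k)}$, and the careful handling of $G_k\xi_{k,j}$ when $\noicov$ is not trace class (so that $\xi_{k,j}$ is only cylindrical). You have correctly identified these as the substantive points; the Lipschitz estimates for $\Phi$ and $\Psi$ and the bound $\norm{G_k\noicov^{1/2}}_{\mathrm{HS}}^2\le\tfrac14\tr\Encov(\xens_k)$ are indeed routine once one writes $G_k\noicov^{1/2}=C_k^{1/2}B_k^*(B_kB_k^*+\Idy)^{-1}$ and uses $t(1+t)^{-2}\le\tfrac14$. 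One small point to tighten: the cross term in $E_k$ is a sum of $J$ independent, conditionally centred, finite-rank operators, and its $L^p$ operator-norm control requires a Rosenthal-type inequality (or at least a fourth-moment argument and interpolation), not just the second-moment bound you mention; this is standard but worth stating explicitly.
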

\begin{proof}
Compare \cite[theorem 5.2]{GlaMonTra09_report} and \cite[theorem 6.1]{KwiMan15}.
\end{proof}

While we do not know of a corresponding proof in the nonlinear case, it has been observed in numerical experiments that also in that case both the deterministic form and the stochastic of the ensemble Kalman filter converge to the same limit \cite{RaaStoEve19}.

\end{document}